\numberwithin{equation}{section}
\theoremstyle{plain}
\newtheorem{theorem}{Theorem}[section]
\newtheorem{lemma}[theorem]{Lemma}
\newtheorem{proposition}[theorem]{Proposition}
\newtheorem{corollary}[theorem]{Corollary}
\newtheorem{conj}[theorem]{Conjecture}
\newtheorem{defn}[theorem]{Definition}
\newtheorem{eg}[theorem]{Example}
\newtheorem*{notation}{Conventions and Notations}
\theoremstyle{plain}
\numberwithin{equation}{section}
\theoremstyle{remark}
\newtheorem{remark}[theorem]{Remark}
\DeclareMathOperator{\Img}{\operatorname{Im}}
\DeclareMathOperator{\Id}{\operatorname{Id}}
\DeclareMathOperator{\rep}{\operatorname{rep}}
\DeclareMathOperator{\Mod}{\operatorname{mod}}
\DeclareMathOperator{\Hom}{\operatorname{Hom}}
\DeclareMathOperator{\Ext}{\operatorname{Ext}}
\DeclareMathOperator{\Spec}{\operatorname{Spec}}
\DeclareMathOperator{\Homup}{\overline{\operatorname{Hom}}}
\DeclareMathOperator{\gldim}{\operatorname{gl.dim}}
\DeclareMathOperator{\projdim}{\operatorname{proj.dim}}
\DeclareMathOperator{\injdim}{\operatorname{inj.dim}}
\DeclareMathOperator{\dimv}{\operatorname{\underline{\mathbf{dim}}}}
\DeclareMathOperator{\Flagd}{\operatorname{Flag}_{d}}
\DeclareMathOperator{\Flagdstr}{\operatorname{Flag}_{d,\operatorname{str}}}
\newcommand{\Gr}{\operatorname{Gr}}
\newcommand{\Grr}{\operatorname{Gr}}
\newcommand{\Gralg}[1]{\operatorname{Gr}^{#1}}
\newcommand{\Grq}{\operatorname{Gr}^{KQ}}
\newcommand{\Flag}[1]{\operatorname{Flag}_{#1}}
\newcommand{\Flagstr}[1]{\operatorname{Flag}_{#1,\operatorname{str}}}
\newcommand{\dimvec}[1]{\boldsymbol{#1}}
\newcommand{\ord}{\operatorname{ord}}
\newcommand{\orde}{s_{P(e)} }
\newcommand{\shorttimes}{\!\times\!}
\newcommand{\oder}{\;\;\text{ or }\;\;}
\newcommand{\representation}[2]{\genfrac{}{}{0pt}{3}{\phantom{000}#2\phantom{00}}{#1}}
\newcommand\xleftrightarrow[2][]{%
  \ext@arrow 9999{\longleftrightarrowfill@}{#1}{#2}}
\newcommand\longleftrightarrowfill@{%
  \arrowfill@\leftarrow\relbar\rightarrow}
\begin{document}
\date{}

\title
{Affine Pavings of Quiver Flag Varieties}

\author{Xiaoxiang Zhou}
\address{Institut für Mathematik\\
Humboldt-Universität zu Berlin\\
Berlin, 12489\\ Germany\\} 
\email{email:xiaoxiang.zhou@hu-berlin.de}

\begin{abstract}
In this article, we construct affine pavings for quiver partial flag varieties when the quiver is of Dynkin type. To achieve our results, we extend methods from Cerulli-Irelli--Esposito--Franzen--Reineke
 and Maksimau as well as techniques from Auslander--Reiten theory.
\end{abstract}

\maketitle
\tableofcontents
%%%%%%%%%%%%%%%%%%%%%%%%%%%%%%%%%%%%%%%%%%%%%%%%%%%%%%%%%%%%%%%%%%%%%%%%%%%%%%%%%%%%%%%%%%%%%
%quiver partial flag varietie
\section{Introduction}
Affine pavings are an important concept in algebraic geometry similar to cellular decompositions in topology. A complex algebraic variety $X$ has an affine paving if $X$ has a filtration
$$\varnothing= X_0 \subset X_1 \subset \cdots \subset X_d=X$$
with $X_i$ closed and $X_{i+1} \setminus X_i$ isomorphic to some affine space $\mathbb{A}^k_{\mathbb{C}}$.

Affine pavings imply nice properties about the cohomology of varieties, for example the vanishing of cohomology in odd degrees. For other properties see \cite[1.7]{de1988homology}.

Affine pavings have been constructed in many cases, as for Grassmannians \cite[Theorem 4.1]{3264}, flag varieties \cite[Theorem 9.9.5]{Dmoduleperv}, as well as certain Springer fibers \cite{E7paving}, quiver Grassmannians \cite[Theorem 4]{irelli2019cell}, and quiver flag varieties \cite[Theorem 1.2]{maksimau2019flag}. This article focuses on the case of (strict) partial flag varieties which parameterize subrepresentations of a fixed indecomposable representation of a quiver. In particular, we consider quivers of Dynkin type or affine type.
In this case, affine pavings have been constructed in \cite{irelli2019cell} for quiver Grassmannians in all types and in \cite{maksimau2019flag} for partial flag varieties of type $A$ and $D$ (see Table \ref{table:result}). Besides, affine pavings 
have been constructed in \cite[Theorem 6.3]{eberhardt2022motivic} for strict partial flag varieties in type $\tilde{A}$ with cyclic orientation, which generalized the result in \cite{sauter2015cell} for complete quiver flag varieties in nilpotent representations of an oriented cycle. In this paper, we will tackle the remaining cases.

\begin{theorem}
Let $Q$ be a quiver, and let $M$ be a representation of $Q$.
\begin{enumerate}[(1)]
\item If $Q$ is Dynkin, then any (strict) partial flag variety $\Flag{}(M)$ has an affine paving;
\item If $Q$ is of type $\tilde{A}$ or $\tilde{D}$, then for any indecomposable representation $M$, the (strict) partial flag variety $\Flag{}(M)$ has an affine paving;
\item If $Q$ is of type $\tilde{E}$, assume that $\Flag{}(N)$ has an affine paving for any regular quasi-simple representation $N \in \rep(Q)$, then $\Flag{}(M)$ has an affine paving for any indecomposable representation $M$.
\end{enumerate}
\end{theorem}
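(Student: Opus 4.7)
The plan is to prove all three parts by induction on the dimension of $M$, with each inductive step furnished by a Bialynicki--Birula (BB) type decomposition coming from a $\mathbb{G}_m$-action on $M$ built from Auslander--Reiten data.

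The common framework is as follows. A grading on the underlying vector space of $M$ compatible with the arrow maps determines a $\mathbb{G}_m$-action on $M$, hence also on $\Flag{\mathbf{d}}(M)$. Its fixed locus decomposes as a finite disjoint union, indexed by weight-types of flags, of products $\prod_i \Flag{\mathbf{d}^{(i)}}(M_i)$, where the $M_i$ are the weight subrepresentations of $M$. If the grading is chosen so that each $M_i$ is strictly simpler than $M$ (for instance, has strictly fewer indecomposable summands, or shorter quasi-length in its tube), the induction hypothesis supplies affine pavings of each factor, and since the attracting cells of the action on $\Flag{\mathbf{d}}(M)$ turn out to be affine bundles over the fixed-point components, $\Flag{\mathbf{d}}(M)$ inherits an affine paving. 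The strict variants are handled in parallel, being unions of some of these same cells.

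For part (1), with $Q$ Dynkin, I plan to adapt the AR-grading of Cerulli-Irelli--Esposito--Franzen--Reineke to the flag setting: a suitable linear function on the AR quiver separates the indecomposable summands of $M$ into distinct weight spaces, each a proper direct summand, and induction finishes the argument. For part (2), with $Q$ of type $\tilde A$ or $\tilde D$ and $M$ indecomposable, the preprojective and preinjective cases reduce to the Dynkin strategy, while the regular case uses the canonical filtration $0 = M_0 \subset \cdots \subset M_n = M$ by regular quasi-simples inside the tube of $M$; this filtration induces a grading, and the flag varieties of the resulting quasi-simples can be treated directly using the explicit tube structure of these types, extending Maksimau's combinatorial approach. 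For part (3), with $Q$ of type $\tilde E$, the same strategy applies, the only change being that the flag varieties of regular quasi-simples appearing in the reduction are supplied by the stated hypothesis rather than by a direct treatment.

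The principal obstacle will be the construction, for each relevant $M$, of a $\mathbb{G}_m$-action whose weight decomposition is strictly simpler and whose attracting cells on $\Flag{\mathbf{d}}(M)$ are affine bundles over the components of the fixed-point locus. Cerulli-Irelli--Esposito--Franzen--Reineke supply this in the case of Grassmannians, and Maksimau does so for flag varieties in types $A$ and $D$; lifting their constructions to arbitrary flag varieties in all Dynkin and affine types will require a careful combinatorial analysis, particularly in the affine cases where the tube structure enters essentially via Auslander--Reiten theory.
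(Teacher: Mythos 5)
Your plan identifies the right general shape (reduce complexity inductively, use AR--theoretic data, stratify the flag variety and show each stratum is an affine bundle over something already understood), and in the split case it is essentially equivalent to what the paper does. Indeed, the paper's Remark \ref{rem:topo} and Theorem \ref{thm:main1} stratify $\Gr(\Phi(Y))$ by the dimension vectors of $U\cap\Phi(X)$ and $\Phi(\pi)(U)$ for a split sequence $0\to X\to Y\to S\to 0$; this stratification is exactly the Bialynicki--Birula stratification for the $\mathbb{G}_m$-action scaling the $S$-summand. So for decomposable $M$ your argument matches the paper's.

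However, there is a genuine gap in your approach, and it is precisely at the hard step. A grading on $M$ compatible with the arrow maps \emph{is} a direct sum decomposition of $M$ into its weight subrepresentations: if $\mathbb{G}_m$ acts by $KQ$-automorphisms, then $M=\bigoplus_k M(k)$ as representations. For \emph{indecomposable} $M$ this forces the grading to be trivial, so the BB machinery produces no reduction. The indecomposable case is exactly what is difficult: for Dynkin type $E$ there are indecomposables of order up to $6$, and for affine type the regular indecomposables must be treated. Your claim that the quasi-simple filtration $0=M_0\subset\cdots\subset M_n=M$ of a regular indecomposable ``induces a grading'' is incorrect: a filtration of an indecomposable by subrepresentations does not split, and the associated graded $\bigoplus M_i/M_{i-1}$ is a \emph{different} representation (a degeneration of $M$), so the $\mathbb{G}_m$-flow would carry you out of $\Gr(\Phi(M))$ rather than stratify it. The same objection applies to the preprojective/preinjective reductions you propose for type $\tilde A$, $\tilde D$.

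What the paper does instead --- and what your sketch is missing --- is Theorem \ref{thm:main2}: when $0\to X\to Y\to S\to 0$ does \emph{not} split but $[S,X]^1=1$, the map $\Psi_{\dimvec{f},\dimvec{g}}:\Gr(\Phi(Y))_{\dimvec{f},\dimvec{g}}\to\Gr_{\dimvec{f}}(\Phi(X))\times\Gr_{\dimvec{g}}(\Phi(S))$ is still a Zariski-locally trivial affine bundle, now over an \emph{open} image which is the complement of the product $\Gr_{\dimvec{f}}(\Phi(X_S))\times\Gr_{\dimvec{g}-\dimv\Phi(S^X)}(\Phi(S/S^X))$. This does not come from any $\mathbb{G}_m$-action on $Y$ (none exists when $Y$ is a rigid indecomposable), and the construction of the affine-bundle structure is via the explicit torsor of splittings of $\bar\eta$ (Lemma \ref{lem:torsor}), not via an attracting-cell argument. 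The AR-theoretic input --- finding a \emph{minimal sectional mono} $X\hookrightarrow Y$ so that $[S,X]^1=1$, $S^X=S$, and the order of $S$ drops --- is what drives the induction for indecomposables and requires the case analysis of Appendix \ref{appendix:proofcomplement}. Without some analogue of this non-split affine-bundle statement, your induction has no base case once you reach indecomposable $M$ of order $>2$.

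Two smaller points. First, both the paper and your proposal need the passage from flag varieties to quiver Grassmannians of the bound algebra $R_d$ or $R_{d,\operatorname{str}}$ (the functor $\Phi$ and Proposition \ref{prop:flag=gr}), plus the $\Ext^2$-vanishing of Lemma \ref{lm:Ext2van}; these are not optional bookkeeping but are what makes the long-exact-sequence and torsor arguments work on the flag side. Second, an attracting-cell argument giving affine bundles over fixed components typically requires smoothness or at least a normality-type hypothesis; quiver Grassmannians of nonrigid representations can be badly singular, which is another reason the paper constructs the bundle structure explicitly rather than invoking a general BB theorem.
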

%The nocolor version 
\renewcommand{\arraystretch}{1.3}
%\begin{table}[ht]
%\vspace{0.5cm}
%\begin{tabular}{|c|c|c|c|}
%\hline
%            & $\Grq(X)$                  & $\Flagd(X)$                          & $\Flagdstr(X)$          \\ \hline
%$A$         & \multirow{3}{*}{\cite[Section 5]{irelli2019cell}} & \multirow{2}{*}{\cite[Theorem 1.2]{maksimau2019flag}}        & \multirow{2}{*}{Theorem \ref{thm:Dynkincase}}       \\ \cline{1-1}
%$D$         &                            &                                      &                         \\ \cline{1-1} \cline{3-4} 
%$E$         &                            & \multicolumn{2}{c|}{Theorem \ref{thm:Dynkincase}}                                          \\ \hline
%$\tilde{A}$ & \multirow{3}{*}{\cite[Section 6]{irelli2019cell}} & \multicolumn{2}{c|}{\multirow{2}{*}{Theorem \ref{thm:affinecase}}}                         \\ \cline{1-1}
%$\tilde{D}$ &                            & \multicolumn{2}{c|}{}                                          \\ \cline{1-1} \cline{3-4} 
%$\tilde{E}$ &                            & \multicolumn{2}{c|}{reduced to the regular quasi-finite case.} \\ \hline
%\end{tabular}
%\vspace{1mm}
%\caption{}\label{table:result2}
%\end{table}

\begin{table}[ht]
\vspace{0.5cm}
\begin{tabular}{|c|c|c|c|}
\hline
            & $\Grq(X)$                  & $\Flagd(X)$                          & $\Flagdstr(X)$          \\ \hline
$A,D$         & \multirow{2}{*}{\cite[Section 5]{irelli2019cell}} & \multirow{1}{*}{\cite[Theorem 1.2]{maksimau2019flag}}        & \multirow{1}{*}{Theorem \ref{thm:Dynkincase}}                                                                   \\ \cline{1-1} \cline{3-4} 
$E$         &                            & \multicolumn{2}{c|}{Theorem \ref{thm:Dynkincase}}                                          \\ \hline
$\tilde{A}, \tilde{D}$ & \multirow{2}{*}{\cite[Section 6]{irelli2019cell}} & \multicolumn{2}{c|}{\multirow{1}{*}{Theorem \ref{thm:affinecase}}}                         \\  \cline{1-1} \cline{3-4} 
$\tilde{E}$ &                            & \multicolumn{2}{c|}{reduced to the regular quasi-finite case.} \\ \hline
\end{tabular}
\vspace{1mm}
\caption{Comparison of existing and new results.}\label{table:result}
\end{table}

We proceed as follows. In Section \ref{sec:flag=gr}, we discuss basic definitions and properties of partial flags. In Section \ref{sec:mainthm} we will prove key Theorems \ref{thm:main1} and \ref{thm:main2}, which allow us to construct affine pavings for quiver partial flag varieties inductively. We apply these theorems to partial flag varieties of Dynkin type, see Section \ref{sec:Dynkin}, and to partial flag varieties of affine type, see Section \ref{sec:affine}. We will combine and extend results from \cite{irelli2019cell} and \cite{maksimau2019flag}.
Following the arguments of \cite{maksimau2019flag} would require studying millions of cases when we  consider the Dynkin quivers of type $E$. To avoid this, we extend the methods of \cite{irelli2019cell} from quiver Grassmannian to quiver partial flag variety. This will reduce the case by case analysis to a feasible computation of (mostly) 8 critical cases, which we carry out in Section \ref{sec:Dynkin} and Section \ref{sec:proofcomplement}. 
%It's easy to see that $\Flag{1}(X)=\Grq(X)$. These geometrical objects can be divided into different pieces according to the dimension vectors of $M_1,\ldots,M_d$, and each piece have its own natural (complex/Zariski) topology. It was proved in \cite{irelli2019cell} that $\Grq(X)$ have an affine paving, and in \cite{maksimau2019flag} that $\Flagd(X)$ have the same property when $Q$ is Dynkin quiver of type $A/E$. Here we go one step further, the results are concluded in the Table \ref{table:result}.
%
%
%The idea of proof is very simple: first, we view the  quiver partial flag variety as the quiver Grassmannian of the more complicated quiver; then we establish the decomposition so that one may solve the problem by induction; finally we set a special way of decomposition for each indecomposable module so that we can avoid meeting the bad decomposition. These contents are in Section \ref{sec:flag=gr},\ref{sec:mainthm},\ref{sec:Dynkin}, accordingly.
\begin{notation}
Throughout this article, $K=\mathbb{C}$, $R$ is a $K$-algebra with unit, and $\Mod(R)$ denotes the category of left $R$-modules of finite dimension. Let $Q$ be a quiver equipped with the finite set of vertices $v(Q)$ and the finite set of edges $a(Q)$. For an
arrow $b$, we call $s(b)$ the starting vertex and $t(b)$ the terminal vertex
of $b$. We denote by $KQ$ the path algebra and $\rep(Q)=\Mod(KQ)$ the category of quiver representations of finite dimension. For a representation $X\in \rep(Q)$, we denote by $X_i:=e_iX$ the $K$-linear space at the vertex $i\in v(Q)$. We denote by $P(i)$, $I(i)$ and $S(i)$ the indecomposable projective, injective, simple modules corresponding to the vertex $i$, respectively.
\end{notation}

%%%%%%%%%%%%%%%%%%%%%%%%%%%%%%%%%%%%%%%%%%%%%%%%%%%%%%%%%%%%%%%%%%%%%%%%%%%%%%%%%%%%%%%%%%%%%
\section*{Acknowledgement}
%\addcontentsline{toc}{section}{Acknowledgement}

First, I would like to thank my supervisor, Jens Niklas Eberhardt, for introducing me this specific problem, discussing earlier drafts, and assisting with the write-up. I also thank Hans Franzen for answering some questions regarding \cite{irelli2019cell}, and thank Ruslan Maksimau, Francesco Esposito for their valuable comments and suggestions. I am grateful to the anonymous referee for their constructive feedback, which has greatly improved this work. This article is part of my master thesis project. Submission of this paper was partially supported by the Deutsche Forschungsgemeinschaft (DFG, German Research Foundation) under Germany´s Excellence Strategy – The Berlin Mathematics Research Center MATH+ (EXC-2046/1, project ID: 390685689)
%My thanks go to Professor xyz for his / her guidance and advice in the preparation and finalization of this article
% I would like to express my gratitude to
%my advisor James Newton without whom this paper would never have been completed. I
%thank him for suggesting this problem and guiding me throughout the whole project. He
%spent many hours discussing earlier drafts with me and explained a lot of things to me.
\section{Preliminaries}\label{sec:flag=gr}
\subsection{Extended quiver}
In this subsection, we introduce the notion of extended quiver which allows to view partial flag varieties as quiver Grassmannians. Intuitively, a flag of quiver representations can be encoded as a subspace of a representation of the extended quiver.

\begin{defn}[Extended quiver]
For a quiver $Q$ and an integer $d \geqslant 1$, the extended quiver $Q_{d}$ is defined as follows:
\begin{itemize}
	\item The vertex set of $Q_d$ is defined as the Cartesian product of the vertex set of $Q$ and $\{1,\ldots,d\}$, i.e.,
	$$v(Q_d)=v(Q) \times \{1,\ldots,d\}.$$
	\item  There are two types of arrows: for each $(i,r) \in v(Q) \times \{1,\ldots,d-1\}$, there is one arrow from $(i,r)$ to $(i,r+1)$; for each arrow $i \longrightarrow j$ in $Q$ and $r \in \{1,\ldots,d\}$, there is one arrow from $(i,r)$ to $(j,r)$.
\end{itemize}
\end{defn}

The extended quiver $Q_d$ is exactly the same quiver as $\hat{\Gamma}_d$ in \cite[Definition 2.2]{maksimau2019flag}. The next definition is a small variation:

\begin{defn}[Strict extended quiver]
For a quiver $Q$ and an integer $d \geqslant 2$, the strict extended quiver $Q_{d,\operatorname{str}}$ is defined as follows:
\begin{itemize}
	\item The vertex set of $Q_d$ is defined as the Cartesian product of the vertex set of $Q$ and $\{1,\ldots,d\}$, i.e.,
	$$v(Q_{d,\operatorname{str}})=v(Q) \times \{1,\ldots,d\}.$$
	\item  We have two types of arrows: for each $(i,r) \in v(Q) \times \{1,\ldots,d-1\}$, there is one arrow from $(i,r)$ to $(i,r+1)$; for each arrow $i \longrightarrow j$ in $Q$ and $r \in \{2,\ldots,d\}$, there is one arrow from $(i,r)$ to $(j,r-1)$.
\end{itemize}
\end{defn}

\begin{eg}\label{eg:ext_quiver}
The (strict) extended quiver for a Dynkin quiver $Q$ of type $A_4$ looks as follows.
% https://q.uiver.app/?q=WzAsMzEsWzAsMiwiXFxidWxsZXQiXSxbMiwyLCJcXGJ1bGxldCJdLFs0LDIsIlxcYnVsbGV0Il0sWzYsMiwiXFxidWxsZXQiXSxbNywyLCJcXGJ1bGxldCJdLFs5LDIsIlxcYnVsbGV0Il0sWzExLDIsIlxcYnVsbGV0Il0sWzEzLDIsIlxcYnVsbGV0Il0sWzcsMSwiXFxidWxsZXQiXSxbOSwxLCJcXGJ1bGxldCJdLFsxMSwxLCJcXGJ1bGxldCJdLFsxMywxLCJcXGJ1bGxldCJdLFs3LDAsIlxcYnVsbGV0Il0sWzksMCwiXFxidWxsZXQiXSxbMTEsMCwiXFxidWxsZXQiXSxbMTMsMCwiXFxidWxsZXQiXSxbMTQsMiwiXFxidWxsZXQiXSxbMTYsMiwiXFxidWxsZXQiXSxbMTQsMSwiXFxidWxsZXQiXSxbMTQsMCwiXFxidWxsZXQiXSxbMTYsMSwiXFxidWxsZXQiXSxbMTgsMSwiXFxidWxsZXQiXSxbMTgsMCwiXFxidWxsZXQiXSxbMjAsMSwiXFxidWxsZXQiXSxbMjAsMiwiXFxidWxsZXQiXSxbMjAsMCwiXFxidWxsZXQiXSxbMTgsMiwiXFxidWxsZXQiXSxbMTYsMCwiXFxidWxsZXQiXSxbMywzLCJcXGhzcGFjZXstMWNtfVFcXGhzcGFjZXstMWNtfSJdLFsxMCwzLCJcXGhzcGFjZXstMWNtfVFfM1xcaHNwYWNley0xY219Il0sWzE3LDMsIlxcaHNwYWNley0xY219UV97MyxzdHJ9XFxoc3BhY2V7LTFjbX0iXSxbMCwxXSxbMiwxXSxbMiwzXSxbNCw1XSxbNiw1XSxbNiw3XSxbOCw5XSxbMTAsOV0sWzEwLDExXSxbMTIsMTNdLFsxNCwxM10sWzE0LDE1XSxbOCwxMl0sWzQsOF0sWzUsOV0sWzksMTNdLFsxMCwxNF0sWzYsMTBdLFs3LDExXSxbMTEsMTVdLFsxOCwxN10sWzE5LDIwXSxbMjEsMTddLFsyMiwyMF0sWzIyLDIzXSxbMjEsMjRdLFsyMywyNV0sWzI0LDIzXSxbMjYsMjFdLFsyMSwyMl0sWzE3LDIwXSxbMjAsMjddLFsxOCwxOV0sWzE2LDE4XV0=
\[\begin{tikzcd}[column sep=3mm, row sep=5mm]
	&&&&&&&[5mm] \bullet && \bullet && \bullet && \bullet &[5mm] \bullet && \bullet && \bullet && \bullet \\
	&&&&&&& \bullet && \bullet && \bullet && \bullet & \bullet && \bullet && \bullet && \bullet \\
	\bullet && \bullet && \bullet && \bullet & \bullet && \bullet && \bullet && \bullet & \bullet && \bullet && \bullet && \bullet \\[-5mm]
	&&& {\hspace{-1cm}Q\hspace{-1cm}} &&&&&&& {\hspace{-1cm}Q_3\hspace{-1cm}} &&&&&&& {\hspace{-1cm}Q_{3,\operatorname{str}}\hspace{-1cm}}
	\arrow[from=3-1, to=3-3]
	\arrow[from=3-5, to=3-3]
	\arrow[from=3-5, to=3-7]
	\arrow[from=3-8, to=3-10]
	\arrow[from=3-12, to=3-10]
	\arrow[from=3-12, to=3-14]
	\arrow[from=2-8, to=2-10]
	\arrow[from=2-12, to=2-10]
	\arrow[from=2-12, to=2-14]
	\arrow[from=1-8, to=1-10]
	\arrow[from=1-12, to=1-10]
	\arrow[from=1-12, to=1-14]
	\arrow[from=2-8, to=1-8]
	\arrow[from=3-8, to=2-8]
	\arrow[from=3-10, to=2-10]
	\arrow[from=2-10, to=1-10]
	\arrow[from=2-12, to=1-12]
	\arrow[from=3-12, to=2-12]
	\arrow[from=3-14, to=2-14]
	\arrow[from=2-14, to=1-14]
	\arrow[from=2-15, to=3-17]
	\arrow[from=1-15, to=2-17]
	\arrow[from=2-19, to=3-17]
	\arrow[from=1-19, to=2-17]
	\arrow[from=1-19, to=2-21]
	\arrow[from=2-19, to=3-21]
	\arrow[from=2-21, to=1-21]
	\arrow[from=3-21, to=2-21]
	\arrow[from=3-19, to=2-19]
	\arrow[from=2-19, to=1-19]
	\arrow[from=3-17, to=2-17]
	\arrow[from=2-17, to=1-17]
	\arrow[from=2-15, to=1-15]
	\arrow[from=3-15, to=2-15]
\end{tikzcd}\]
\end{eg}

Next, we define the quiver algebras for later use.
\begin{defn}[Algebra of an extended quiver]\label{def:bqa}
For an extended quiver $Q_d$, let $KQ_d$ be the corresponding path algebra, and $I$ be the ideal of $KQ_d$ identifying all the paths with the same sources and targets. The algebra of the extended quiver $Q_d$ is defined as
$$R_d:= KQ_d/I.$$
Similarly, we define the algebra $R_{d,\operatorname{str}}:= KQ_{d,\operatorname{str}}/I$ for the strict extended quiver.
\end{defn}
By abuse of notation, we often abbreviate $R_d$ and $R_{d,\operatorname{str}}$ by $R$. 
\subsection{Canonical functor $\Phi$}
We follow \cite[2.3]{maksimau2019flag} in this subsection with a few variations. 
\begin{defn}[Partial flag]\label{def:pf}
For a quiver representation $X \in \rep(Q)$, a partial flag of $X$ is defined as an increasing sequence of subrepresentations of $X$. For an integer $d \geqslant 1$, we denote
$$\Flagd(X):=\left\{ 0 \subseteq M_1 \subseteq \cdots M_d \subseteq X \right\}$$
as the collection of all partial flags of length $d$, and call it the partial flag variety. 
\end{defn}
\begin{defn}[Strict partial flag]\label{def:spf}
For a quiver representation $X \in \rep(Q)$, a strict partial flag of $X$ is defined as an increasing sequence of subrepresentations $(M_k)_k$ of $X$ such that for any arrow $x \in v(Q)$ and any $k$, we have $x.M_{k+1} \subseteq M_{k}$. For an integer $d \geqslant 2$, we denote
$$\Flagdstr(X):=\left\{ 0 \subseteq M_1 \subseteq \cdots M_d \subseteq X\mid x.M_{k+1} \subseteq M_k \right\}$$
as the collection of all strict partial flags of length $d$, and call it the strict partial flag variety. 
\end{defn}
\begin{defn}[Grassmannian]
Let $R$ be the bounded quiver algebra defined in Definition \ref{def:pf} or \ref{def:spf}. For a module $T \in \Mod(R)$, the Grassmannian $\Gralg{R}(T)$ is defined as the set of all submodules of $T$, i.e.,
$$\Gralg{R}(T):=\{T' \subseteq T \text{ as the submodule}  \}.$$
\end{defn}
\begin{defn}[Canonical functor $\Phi$]
The canonical functor $\Phi:\rep(Q) \longrightarrow \Mod(R) $ is defined as follows:
\begin{itemize}
\item $\left(\Phi(X)\right)_{(i,r)}:=X_i$;
\item $\left(\Phi(X)\right)_{(i,r) \rightarrow (i,r+1)}:=\Id_{X_i}$;
\item Either $\left(\Phi(X)\right)_{(i,r) \rightarrow (j,r)}:=X_{i \rightarrow j}$ for $R=R_d$, \\or $\left(\Phi(X)\right)_{(i,r) \rightarrow (j,r-1)}:=X_{i \rightarrow j}$ for $R=R_{d,\operatorname{str}}$.
\end{itemize}
\end{defn}
The functor $\Phi$ helps to realize a partial flag as a quiver subrepresentation. 
\begin{proposition}\label{prop:flag=gr}
For a representation $X\in \rep(Q)$, the canonical functor $\Phi$ induces isomorphisms
$$\Flagd(X)\cong \Gralg{R_d}(\Phi(X)) \qquad \Flagdstr(X)\cong \Gralg{R_{d,\operatorname{str}}}(\Phi(X)).$$
\end{proposition}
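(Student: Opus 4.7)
The plan is to prove both isomorphisms by directly unwinding the definitions. Fix $X \in \rep(Q)$ and write $T = \Phi(X)$, so that $T_{(i,r)} = X_i$ for all $(i,r) \in v(Q_d)$.

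First I would define a set-theoretic map
\[
\Psi : \Flagd(X) \longrightarrow \Gralg{R_d}(\Phi(X)), \qquad (M_1 \subseteq \cdots \subseteq M_d) \longmapsto T',
\]
where $T'$ is the collection of subspaces $T'_{(i,r)} := (M_r)_i \subseteq X_i = T_{(i,r)}$. To check that $T'$ is a submodule of $\Phi(X)$, I would verify the two families of arrows separately: the arrow $(i,r) \to (i,r+1)$ acts as $\Id_{X_i}$, which preserves $T'$ precisely because $(M_r)_i \subseteq (M_{r+1})_i$; the arrow $(i,r) \to (j,r)$ acts as $X_{i \to j}$, which sends $T'_{(i,r)}$ into $T'_{(j,r)}$ because each $M_r$ is a subrepresentation. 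The relations defining $R_d$ from $KQ_d$ are automatically inherited by any submodule of $\Phi(X)$, so no further check is needed.

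Next I would construct the inverse. Given $T' \subseteq \Phi(X)$, set $(M_r)_i := T'_{(i,r)}$ with structure maps inherited from $X$. The compatibility of $T'$ with the horizontal arrows $X_{i \to j}$ ensures each $M_r$ is a subrepresentation of $X$, and compatibility with the vertical identity arrows gives $M_r \subseteq M_{r+1}$. These two assignments are mutually inverse by construction, since both sides are parameterised by exactly the same collection of linear subspaces $\bigl( U_{(i,r)} \subseteq X_i \bigr)$ satisfying the same linear incidence conditions. Hence the bijection is actually an isomorphism of algebraic varieties (both sides sit inside $\prod_{i,r} \Gr(X_i)$ as closed subvarieties cut out by the same equations).

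For the strict case the argument is identical, except that the horizontal arrow $(i,r) \to (j,r-1)$ of $Q_{d,\operatorname{str}}$ carries the map $X_{i\to j}$, and its compatibility with $T'$ translates exactly into the strictness condition $x.M_r \subseteq M_{r-1}$ from Definition \ref{def:spf}. I do not anticipate any real obstacle: the only thing one has to be careful about is verifying that the relations in $I$ (identifying parallel paths) are automatically respected, and this is immediate because they already hold in $\Phi(X)$ and pass to every subspace of it.
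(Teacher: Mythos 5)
Your proof is correct and takes the same approach the paper sketches (the paper merely defines the map $M \mapsto \Phi'(M)$ with $\Phi'(M)_{(i,r)} = M_{i,r}$ and defers to \cite{maksimau2019flag} for details). One small point worth making explicit in the strict case: the strict extended quiver $Q_{d,\operatorname{str}}$ has no horizontal arrow $(i,r)\to(j,r)$, so the fact that each $M_r$ is a subrepresentation of $X$ does not come directly from an arrow as in the non-strict case; rather it follows from combining the diagonal compatibility $X_{i\to j}\bigl((M_r)_i\bigr) \subseteq (M_{r-1})_j$ with the vertical containment $(M_{r-1})_j \subseteq (M_r)_j$ (and, for $r=1$, from $X_{i\to j}\bigl((M_1)_i\bigr) \subseteq X_{i\to j}\bigl((M_2)_i\bigr) \subseteq (M_1)_j$, using $d\geq 2$). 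This is the one place where the strict argument is not literally ``identical,'' though it is an easy deduction and does not affect the correctness of your proof.
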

\begin{proof}
The isomorphism maps a flag $M:M_1 \subseteq \cdots \subseteq M_d$ to a representation $\Phi'(M)$ with $\Phi'(M)_{(i,r)}=M_{i,r}$ and obvious morphisms for arrows. The non-strict case is mentioned in \cite[page 4]{maksimau2019flag} and the strict case works similarly.
\end{proof}
%https://tex.stackexchange.com/questions/495252/i-want-to-get-a-big-subset
\begin{eg}\label{eg:bigpic}
Consider the quiver $Q\colon x \longrightarrow y \longleftarrow z \longrightarrow w$, and let $X\colon X_x \longrightarrow X_y \longleftarrow X_z \longrightarrow X_w$ be a representation. The varieties $\Flag{3}(X),\Flagstr{3}(X)$ then arise as quiver Grassmannian as shown in Figure \ref{fig:flagasgr}.
\begin{center}
	\begin{figure}[ht]
		\vspace{0cm}
		\centering
		\includegraphics[width=0.9\linewidth]{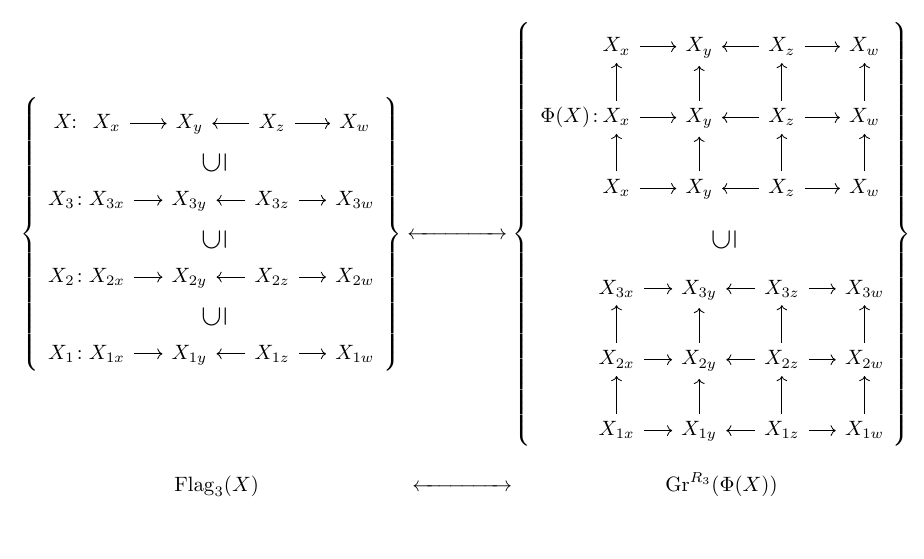}
		\vspace{-0.5cm}
		\includegraphics[width=\linewidth]{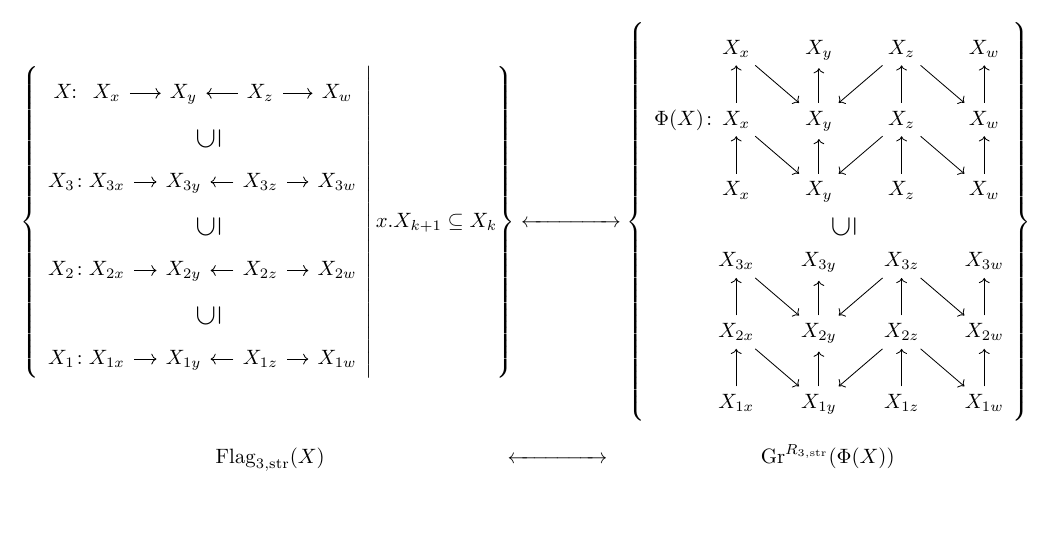}
		\vspace{-1cm}
%		\captionsetup{labelformat=empty}
		\caption{Quiver flag variety realized as quiver Grassmannian.}
		\vspace{0.3cm}
		\label{fig:flagasgr}
	\end{figure}
\end{center}
\end{eg}
In many cases, the proof of the strict case and the non-strict case is the same, so we often treat them in the same way. For example, we may abbreviate the formula in Proposition \ref{prop:flag=gr} as 
$$\Flag{}(X)\cong \Gr(\Phi(X)).$$
\subsection{Dimension vector}In this subsection we recall some notations of dimension vectors.
\begin{defn}[Dimension vector]
For a quiver $Q$ and a representation $M \in \rep(Q)$, the set of dimension vectors of $Q$ is defined as $\prod_{i \in v(Q)} \mathbb{Z}$, and the dimension vector of $M$ is defined as
$$\dimv M:=(\dim_K M_i)_{i \in v(Q)}.$$
\end{defn}
Moreover, if $R=KQ/I$ is a bounded quiver algebra, then every module $T \in \Mod(R)$ can be viewed as a representation of $Q$, so we automatically have a notion of dimension vector for $R$ and $T$.

Now we can write the (strict) partial flag variety and Grassmannian as disjoint union of several pieces. Since $v(Q_{d,(\operatorname{str})})=v(Q) \times \{1,\ldots,d\}$, any dimension vector $\dimvec{f}$ of $R$ can be viewed as $d$ dimension vectors $(\dimvec{f}_1,\ldots,\dimvec{f}_d)$. Define
\begin{equation*}
\begin{aligned}
  \Flag{\dimvec{f}}(X):=\;&\left\{ 0 \subseteq M_1 \subseteq \cdots M_d \subseteq X \;\middle|\; \dimv M_k=\dimvec{f}_k \right\} && \subseteq  \Flagd(X), \\ 
  \Flag{\dimvec{f},\operatorname{str}}(X):=\;&\left\{ 0 \subseteq M_1 \subseteq \cdots M_d \subseteq X \;\middle|\; x.M_{k+1} \subseteq M_k,\, \dimv M_k=\dimvec{f}_k \right\} && \subseteq  \Flagdstr(X), \\ 
  \Gralg{R}_{\dimvec{f}}(T):=\;&\{T' \subseteq T \text{ with } \dimv T'=\dimvec{f}\} && \subseteq  \Gralg{R}(T). 
\end{aligned}
\end{equation*}
Then from the Proposition \ref{prop:flag=gr} we get 
$$\Flag{\dimvec{f}}(X) \cong \Gralg{R_d}_{\dimvec{f}}(\Phi(X)) \qquad \Flag{\dimvec{f},\operatorname{str}}(X) \cong \Gralg{R_{d,\operatorname{str}}}_{\dimvec{f}}(\Phi(X)).$$
\begin{remark}
All the spaces we defined here have natural topologies and variety structures. For example, by the standard embedding
\[\begin{tikzcd}
	\Gralg{R}_{\dimvec{f}}(T) & \displaystyle\prod_{(i,r) \in v\left(Q_{d,(str)}\right)} \Gr_{\dimvec{f}_{i,r}}\big(T_{(i,r)}\big),
	\arrow[hook, from=1-1, to=1-2]
\end{tikzcd}\]
$\Gralg{R}_{\dimvec{f}}(T)$ is then endowed with the subspace topology and subvariety structure.
\end{remark}

Finally, we need to define the Euler form of two dimension vectors. For this we need to define the set of virtual arrows of the quivers $Q_d$ and $Q_{d,\operatorname{str}}$. Following Example \ref{example:virtualarrow}, the virtual arrows of the quivers $Q_3$ and $Q_{3,\operatorname{str}}$ are depicted in red.
\begin{defn}[Virtual arrows of the quiver $Q_d$]
For $d \geqslant 1$, the virtual arrows of the quiver $Q_d$ are defined as a triple $\big(va(Q_d),s,t\big)$, where
$$va(Q_d):=a(Q) \times \{1,\ldots, d-1 \}$$
is a finite set, and $s,t: va(Q_d) \longrightarrow v(Q_d)$ are maps defined by
$$s\big((i\rightarrow j,r)\big)=(i,r) \qquad t \big((i\rightarrow j,r)\big)=(j,r+1).$$
\end{defn}
\begin{defn}[Virtual arrows of the quiver $Q_{d,\operatorname{str}}$]
For $d \geqslant 2$, the virtual arrows of the quiver $Q_{d,\operatorname{str}}$ is defined as a triple $\big(va(Q_{d,\operatorname{str}}),s,t\big)$, where
$$va(Q_{d,\operatorname{str}}):=a(Q) \times \{2,\ldots, d-1 \}$$
is a finite set, and $s,t: va(Q_{d,\operatorname{str}}) \longrightarrow v(Q_{d,\operatorname{str}})$ are maps defined by
$$s\big((i\rightarrow j,r)\big)=(i,r) \qquad t \big((i\rightarrow j,r)\big)=(j,r).$$
\end{defn}
\begin{eg}\label{example:virtualarrow}
$\qquad$
	\begin{figure}[ht]
		\vspace{0cm}
		\centering
\[\begin{tikzcd}[column sep=3mm, row sep=5mm]
	&&&&&&&[5mm] \bullet && \bullet && \bullet && \bullet &[5mm] \bullet && \bullet && \bullet && \bullet \\
	&&&&&&& \bullet && \bullet && \bullet && \bullet & \bullet && \bullet && \bullet && \bullet \\
	\bullet && \bullet && \bullet && \bullet & \bullet && \bullet && \bullet && \bullet & \bullet && \bullet && \bullet && \bullet \\[-5mm]
	&&& {\hspace{-1cm}Q\hspace{-1cm}} &&&&&&& {\hspace{-1cm}Q_3\hspace{-1cm}} &&&&&&& {\hspace{-1cm}Q_{3,\operatorname{str}}\hspace{-1cm}}
	\arrow[from=3-1, to=3-3]
	\arrow[from=3-5, to=3-3]
	\arrow[from=3-5, to=3-7]
	\arrow[from=3-8, to=3-10]
	\arrow[from=3-12, to=3-10]
	\arrow[from=3-12, to=3-14]
	\arrow[from=2-8, to=2-10]
	\arrow[from=2-12, to=2-10]
	\arrow[from=2-12, to=2-14]
	\arrow[from=1-8, to=1-10]
	\arrow[from=1-12, to=1-10]
	\arrow[from=1-12, to=1-14]
	\arrow[from=2-8, to=1-8]
	\arrow[from=3-8, to=2-8]
	\arrow[from=3-10, to=2-10]
	\arrow[from=2-10, to=1-10]
	\arrow[from=2-12, to=1-12]
	\arrow[from=3-12, to=2-12]
	\arrow[from=3-14, to=2-14]
	\arrow[from=2-14, to=1-14]
	\arrow[from=2-15, to=3-17]
	\arrow[from=1-15, to=2-17]
	\arrow[from=2-19, to=3-17]
	\arrow[from=1-19, to=2-17]
	\arrow[from=1-19, to=2-21]
	\arrow[from=2-19, to=3-21]
	\arrow[from=2-21, to=1-21]
	\arrow[from=3-21, to=2-21]
	\arrow[from=3-19, to=2-19]
	\arrow[from=2-19, to=1-19]
	\arrow[from=3-17, to=2-17]
	\arrow[from=2-17, to=1-17]
	\arrow[from=2-15, to=1-15]
	\arrow[from=3-15, to=2-15]
	\arrow[dashed,color={rgb,255:red,255;green,51;blue,51}, from=2-8, to=1-10]
	\arrow[dashed,color={rgb,255:red,255;green,51;blue,51}, from=3-8, to=2-10]
	\arrow[dashed,color={rgb,255:red,255;green,51;blue,51}, from=2-12, to=1-10]
	\arrow[dashed,color={rgb,255:red,255;green,51;blue,51}, from=3-12, to=2-10]
	\arrow[dashed,color={rgb,255:red,255;green,51;blue,51}, from=3-12, to=2-14]
	\arrow[dashed,color={rgb,255:red,255;green,51;blue,51}, from=2-12, to=1-14]
	\arrow[dashed,color={rgb,255:red,255;green,51;blue,51}, from=2-15, to=2-17]
	\arrow[dashed,color={rgb,255:red,255;green,51;blue,51}, from=2-19, to=2-17]
	\arrow[dashed,color={rgb,255:red,255;green,51;blue,51}, from=2-19, to=2-21]
\end{tikzcd}\]
\vspace{-5mm}
		\label{fig:virtualarrow}
		\vspace{2mm}
	\end{figure}
\end{eg}
\begin{defn}[Euler form of $R$]
Let $R$ be a bounded quiver algebra defined in Definition \ref{def:bqa}. We denote
\begin{equation*}
\begin{aligned}
	v(R):&= \{\text{vertices in $Q_d$ or $Q_{d,\operatorname{str}}$}\}, \\
	a(R):&= \{\text{arrows in $Q_d$ or $Q_{d,\operatorname{str}}$}\}, \\
	va(R):&= \{\text{virtual arrows in $Q_d$ or $Q_{d,\operatorname{str}}$}\}. \\
\end{aligned}
\end{equation*}

For two dimension vectors $\dimvec{f},\dimvec{g}$ of $R$, the Euler form $\left< \dimvec{f},\dimvec{g}\right>_R$ is defined by
	$$\left< \dimvec{f},\dimvec{g}\right>_R:= \sum_{i \in v(R)} f_ig_i - \sum_{b \in a(R)} f_{s(b)}g_{t(b)}+ \sum_{c \in va(R)} f_{s(c)}g_{t(c)}.$$
\end{defn}
\subsection{Ext-vanishing properties}
We will show that some higher rank extension group are zero, which will be a key ingredient in the proofs of the next section.

For a bounded quiver algebra $R$ defined in Definition \ref{def:bqa}, we have a standard resolution for every $R$-module $T$: 
% https://q.uiver.app/?q=WzAsMTIsWzAsMCwiMCJdLFsxLDAsIlxcYmlnb3BsdXNfe1xcc3Vic3RhY2t7YyBcXGluIHZhKFEpXFxcXGM9Yl8xY18xPWJfMmNfMn19IFJlX3t0KGMpfSBcXG90aW1lc19LIGVfe3MoYyl9VCJdLFsyLDAsIlxcYmlnb3BsdXNfe2IgXFxpbiBhKFEpfSBSZV97dChiKX0gXFxvdGltZXNfSyBlX3tzKGIpfVQiXSxbMywwLCJcXGJpZ29wbHVzX3tpIFxcaW4gdihRKX0gUmVfe2l9IFxcb3RpbWVzX0sgZV97aX1UIl0sWzQsMCwiVCJdLFs1LDAsIjAiXSxbMSwxLCJyIFxcb3RpbWVzIHgiXSxbMiwxLCJcXHN1YnN0YWNre1xccGhhbnRvbXsrfXJjXzEgXFxvdGltZXMgeCArciBcXG90aW1lcyBiXzF4XFxcXC1yY18yIFxcb3RpbWVzIHggLXIgXFxvdGltZXMgYl8yeH0iXSxbMywxLCJyIFxcb3RpbWVzIHgiXSxbNCwxLCJyeCJdLFsyLDIsInIgXFxvdGltZXMgeCJdLFszLDIsInJiIFxcb3RpbWVzIHgtciBcXG90aW1lcyBieCJdLFswLDFdLFsxLDJdLFsyLDNdLFszLDRdLFs0LDVdLFs2LDcsIiIsMCx7InN0eWxlIjp7InRhaWwiOnsibmFtZSI6Im1hcHMgdG8ifX19XSxbOCw5LCIiLDAseyJzdHlsZSI6eyJ0YWlsIjp7Im5hbWUiOiJtYXBzIHRvIn19fV0sWzEwLDExLCIiLDAseyJzdHlsZSI6eyJ0YWlsIjp7Im5hbWUiOiJtYXBzIHRvIn19fV1d
\[
\begin{tikzcd}[row sep=0mm,column sep=3mm]
	0 & {\displaystyle\bigoplus_{c \in va(Q)} \hspace{-3mm}Re_{t(c)} \otimes_K e_{s(c)}T} & {\displaystyle\bigoplus_{b \in a(Q)}\hspace{-2mm} Re_{t(b)} \otimes_K e_{s(b)}T} & {\displaystyle\bigoplus_{i \in v(Q)}\hspace{-2mm} Re_{i} \otimes_K e_{i}T} & T & 0 \\
	& {\hspace{10mm} r \otimes x} & {\substack{\phantom{+}rc_1 \otimes x +r \otimes b_1x\\-rc_2 \otimes x -r \otimes b_2x}\hspace{-5mm}} & {\hspace{6mm}r \otimes x} & rx \\
	&& {\hspace{6mm}r \otimes x} & {rb \otimes x-r \otimes bx\hspace{-3mm}}
	\arrow[from=1-1, to=1-2]
	\arrow[from=1-2, to=1-3]
	\arrow[from=1-3, to=1-4]
	\arrow[from=1-4, to=1-5]
	\arrow[from=1-5, to=1-6]
	\arrow[maps to, from=2-2, to=2-3]
	\arrow[maps to, from=2-4, to=2-5]
	\arrow[maps to, from=3-3, to=3-4]
\end{tikzcd}
\]
There are exactly two paths of length two from $s(c)$ to $t(c)$ for any virtual arrow $c$, which we denoted by $b_1c_1$ and $b_2c_2$ in the above. By definition, these paths are identified in $R$.

\begin{lemma}\label{lm:Extvan}
Let $M,N \in \rep(Q)$.
\begin{enumerate}[(1)]
	\item $\gldim R \leqslant 2$;\label{lm:gldim}
	\item The functor $\Phi:\rep(Q) \longrightarrow \Mod(R)$ is exact and fully faithful;\label{lm:functorisexact}
	\item $\Phi$ maps projective module to projective module, and maps injective module to injective module;\label{lm:toproj}
	\item $\Ext^i_{KQ}(M,N) \cong \Ext^i_{R}(\Phi(M),\Phi(N))$;\label{lm:isoofext}
	\item $\projdim \Phi(M) \leqslant 1. \injdim \Phi(M) \leqslant 1$;\label{lm:projdim}
\end{enumerate}
\end{lemma}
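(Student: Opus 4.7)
My plan is to prove the five items in order, leveraging the standard resolution displayed just before the statement. Item (1) is immediate from that resolution, which is a length-$2$ projective resolution of any $R$-module $T$ (each term is a direct sum of indecomposable projectives of the form $Re_v$). For (2), I would verify exactness by inspection: $\Phi$ simply copies the vector space $X_i$ to every vertex $(i,r)$ with identities along the vertical arrows, so a short exact sequence at each vertex $i\in v(Q)$ becomes a short exact sequence at each vertex $(i,r)\in v(R)$. Fully faithfulness follows because any morphism $f:\Phi(M)\to\Phi(N)$ must commute with the identity maps attached to the vertical arrows, forcing $f_{(i,r)}$ to be independent of $r$; the remaining commutativity conditions coming from horizontal arrows then exactly match those required for a morphism $M\to N$ in $\rep(Q)$.

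For (3), I would identify $\Phi(P(i))$ explicitly with the indecomposable projective $Re_{(i,1)}$ of $R=R_d$ (and an analogous statement for $R_{d,\operatorname{str}}$) by sending the generator $e_{(i,1)}$ to $e_i\in P(i)_i\subseteq \Phi(P(i))_{(i,1)}$. This map is surjective because $\Phi(P(i))$ is cyclic, generated from $(i,1)$ via the identities along the vertical arrows and the structure maps along the horizontal arrows; it is an isomorphism by a dimension count, both sides having dimension at $(j,r)$ equal to the number of paths $i\to j$ in $Q$ (using the relations in $R$ to collapse any two-step detour across a virtual arrow). The injective statement is dual, identifying $\Phi(I(i))$ with the indecomposable injective of $R$ at the opposite boundary layer. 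Given (3), item (4) is the standard argument: take a projective resolution $P_\bullet\to M$ in $\rep(Q)$, apply $\Phi$ to get a projective resolution of $\Phi(M)$ by (2) and (3), and use the $\Hom$-identification from fully faithfulness to conclude $\Ext^i_{KQ}(M,N)\cong \Ext^i_R(\Phi(M),\Phi(N))$. Item (5) then follows because $KQ$ is hereditary, so every $M$ admits a length-$1$ projective resolution, whose image under $\Phi$ bounds $\projdim \Phi(M)\leq 1$; the injective bound is dual.

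The main technical step will be item (3): carefully counting the dimension of $(Re_{(i,1)})_{(j,r)}$ after imposing the relations of $R$. One has to check that the only relations among paths in $KQ_d$ (or $KQ_{d,\operatorname{str}}$) are those identifying the two length-$2$ paths across each virtual arrow, so that the dimension at $(j,r)$ equals the number of $Q$-paths $i\to j$, uniformly in $r$. This is precisely what the standard resolution encodes, so once the bookkeeping is in place the remaining homological arguments are routine; the main care is to get the indexing correct for both the non-strict and strict variants.
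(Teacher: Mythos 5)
Your proposal is correct and follows essentially the same route as the paper: item (1) from the standard resolution, item (2) by direct inspection of the identity maps along vertical arrows, item (3) by identifying $\Phi(P(i))$ with $P((i,1))=Re_{(i,1)}$ (and dually for injectives), item (4) by applying $\Phi$ to a projective resolution and using full faithfulness, and item (5) from hereditarity of $KQ$ together with (2) and (3). The only difference is one of exposition: where the paper simply observes $\Phi(P(i))=P((i,1))$ and $\Phi(I(i))=I((i,d))$, you spell out the cyclic-generation and dimension-count justification for that identification, which is a sound way to make the observation rigorous.
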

\begin{proof}$\,$

For (\ref{lm:gldim}), this follows from the standard resolution.

For (\ref{lm:functorisexact}), it follows by direct inspection, see \cite[Lemma 2.3]{maksimau2019flag}.
%if we have the short exact sequence in $\rep(Q)$:
%$$0\longrightarrow X \longrightarrow Y \longrightarrow S \longrightarrow 0$$
%then for each vector $i\in v(Q)$, we have the short exact sequence 
%$$0\longrightarrow X_i \longrightarrow Y_i \longrightarrow S_i \longrightarrow 0,$$
%which is equivalent to the short exact sequence 
%$$0\longrightarrow \Phi(X)_{(i,r)} \longrightarrow \Phi(Y)_{(i,r)} \longrightarrow \Phi(S)_{(i,r)} \longrightarrow 0,$$
%for every vector $(i,r)$ in the extended quiver, so the complex
%$$0\longrightarrow \Phi(X) \longrightarrow \Phi(Y) \longrightarrow \Phi(S) \longrightarrow 0$$
%is exact.

For (\ref{lm:toproj}), we reduce to the case of indecomposable projective modules, and observe that $$\Phi(P(i))=P\big((i,1)\big),\qquad\Phi(I(i))=I\big((i,d)\big).$$

For (\ref{lm:isoofext}), it comes from the fact that $\Phi$ is fully faithful and maps projective module to projective module.
%the isomorphism 
%$$\Ext^i_{KQ}(M,N) \cong \Ext^i_{R}(\Phi(M),\Phi(N))$$
%follows by the projective resolution of $M$.
%the surjection of map
%$$\Hom_{KQ}(M,N) \longrightarrow \Hom_{R}(\Phi(M),\Phi(N))$$
%follows by the commutative diagram:
%
%\begin{center}
%% https://tikzcd.yichuanshen.de/#N4Igdg9gJgpgziAXAbVABwnAlgFyxMJZABgBpiBdUkANwEMAbAVxiRAFkB9LEAX1PSZc+QijIBGKrUYs2XHv0HY8BIuPJT6zVohAA5bnwEgMykWtKTqW2boMKpMKAHN4RUADMAThAC2SdRAcCCQAZmsZHRAAHWi0AAssTmAACixSLwBKXiNPH39EQOCkMmltNliASShckG8-MOpixAAmCPLdKprFOvySppDW9tsYuMTktIyAanFsvgpeIA
%\begin{tikzcd}
%M_i \arrow[r, "{\phi_{(i,r+1)}}"]                & N_i                  \\
%M_i \arrow[r, "{\phi_{(i,r)}}"] \arrow[u, "\Id"] & N_i \arrow[u, "\Id"]
%\end{tikzcd}
%\end{center}

For (\ref{lm:projdim}), notice that the minimal projective resolution of $M$ is of length 1, and $\Phi(-)$ sends the projective resolution of $M$ to the projective resolution of $\Phi(M)$ by  (\ref{lm:toproj}), thus we get $\projdim \Phi(M) \leqslant 1$. The injective dimension of $\Phi(M)$ is computed in a similar way.
\end{proof}
The following key lemma will be crucial later.
\begin{lemma}\label{lm:Ext2van}
Let $X,S \in \rep(Q)$ and $V \subseteq \Phi(X), W \subseteq \Phi(S)$, $T \in \Mod(R)$. Then $\Ext^2_{R}(W,T)=0$ and $\Ext^2_{R}(T,\Phi(X)/V)=0$.
\end{lemma}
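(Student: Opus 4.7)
The plan is to deduce both vanishing statements from Lemma~\ref{lm:Extvan}, specifically from parts (\ref{lm:gldim}) and (\ref{lm:projdim}), via the standard long exact sequence trick. The bound $\gldim R \leqslant 2$ tells us that all $\Ext^3$-groups vanish, and the fact that $\projdim \Phi(M) \leqslant 1$ and $\injdim \Phi(M) \leqslant 1$ tells us that $\Ext^{\geqslant 2}$ vanishes when one of the arguments comes from $\Phi(\rep(Q))$. The idea is to squeeze $W$ and $\Phi(X)/V$ between these two known regimes by embedding them in a short exact sequence involving a module in the image of $\Phi$.

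For the first statement, I would start from the short exact sequence
\[
0 \longrightarrow W \longrightarrow \Phi(S) \longrightarrow \Phi(S)/W \longrightarrow 0
\]
and apply $\Hom_R(-,T)$ to obtain, in the long exact sequence for $\Ext^\bullet_R(-,T)$, the piece
\[
\Ext^2_R\bigl(\Phi(S),T\bigr) \longrightarrow \Ext^2_R(W,T) \longrightarrow \Ext^3_R\bigl(\Phi(S)/W,T\bigr).
\]
The left term vanishes because $\projdim \Phi(S) \leqslant 1$ by Lemma~\ref{lm:Extvan}(\ref{lm:projdim}), and the right term vanishes because $\gldim R \leqslant 2$ by Lemma~\ref{lm:Extvan}(\ref{lm:gldim}). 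Hence $\Ext^2_R(W,T)=0$.

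For the second statement, I would dualize the argument. Starting from
\[
0 \longrightarrow V \longrightarrow \Phi(X) \longrightarrow \Phi(X)/V \longrightarrow 0
\]
and applying $\Hom_R(T,-)$, the relevant segment of the long exact sequence reads
\[
\Ext^2_R\bigl(T,\Phi(X)\bigr) \longrightarrow \Ext^2_R\bigl(T,\Phi(X)/V\bigr) \longrightarrow \Ext^3_R(T,V).
\]
Here the left term vanishes by $\injdim \Phi(X) \leqslant 1$ (Lemma~\ref{lm:Extvan}(\ref{lm:projdim})) and the right term vanishes by $\gldim R \leqslant 2$, yielding the desired conclusion.

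There is no real obstacle beyond checking that both inputs to Lemma~\ref{lm:Extvan}(\ref{lm:projdim}) apply: the statement gives both $\projdim \Phi(M) \leqslant 1$ and $\injdim \Phi(M) \leqslant 1$, which is exactly what lets the symmetric pair of arguments run through. The only subtle point to confirm is that $V$ and $W$ are arbitrary submodules (not necessarily in the image of $\Phi$), but this is precisely why the $\gldim R \leqslant 2$ bound is needed on the other side of the exact sequence — it absorbs the ignorance about the homological properties of the sub/quotient.
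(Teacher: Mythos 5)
Your argument is correct and coincides with the paper's own proof: the same two short exact sequences, the same segments of the long exact sequence, and the same appeal to $\gldim R \leqslant 2$ together with $\projdim \Phi(S) \leqslant 1$ and $\injdim \Phi(X) \leqslant 1$. Nothing to add.
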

\begin{proof}
The short exact sequence 
$$0 \longrightarrow W \longrightarrow \Phi(S) \longrightarrow \Phi(S)/W \longrightarrow 0$$
induces the long exact sequence 
$$\cdots \longrightarrow \Ext^2_{R}(\Phi(S),T) \longrightarrow \Ext^2_{R}(W,T) \longrightarrow \Ext^3_{R}(\Phi(S)/W,T) \longrightarrow \cdots.$$
By Lemma \ref{lm:Extvan} (\ref{lm:gldim}) and (\ref{lm:projdim}), $\Ext^3_{R}(\Phi(S)/W,T)$ and $\Ext^2_{R}(\Phi(S),T)$ are both $0$, so $\Ext^2_{R}(W,T)=0$.

Similarly, from the short exact sequence
$$0 \longrightarrow V \longrightarrow \Phi(X) \longrightarrow \Phi(X)/V \longrightarrow 0$$
we get the induced long exact sequence
$$\cdots \longrightarrow \Ext^2_{R}(T,\Phi(X)) \longrightarrow \Ext^2_{R}(T,\Phi(X)/V) \longrightarrow \Ext^3_{R}(T,V) \longrightarrow \cdots,$$
so $\Ext^2_{R}(T,\Phi(X)/V)=0$.
\end{proof}
We will frequently use extension groups as well as long exact sequences, so we introduce some abbreviations. For $Q$-representations $M,N$ and $R$-modules $T,T'$, we denote  
\begin{equation*}
\begin{aligned}[]
	[M,N]^i:&=\dim_K \Ext^i_{KQ} (M,N)\qquad [M,N]:=\dim_K \Hom_{KQ} (M,N)\\
	[T,T']^i:&=\dim_K \Ext^i_R (T,T')\qquad\hspace{0.5cm} [T,T']:=\dim_K \Hom_R (T,T')
\end{aligned}
\end{equation*}
and write the Euler form as
$$\left< T,T'\right>_R:= \sum_{i=0}^{\infty} (-1)^i [T,T']^i \quad=[T,T']-[T,T']^1+[T,T']^2.$$
\begin{lemma}[Homological interpretation of the Euler form]
	For two $R$-modules $T,T'$, we have
	$$	\left< T,T'\right>_R = 	\left< \dimv T,\dimv T'\right>_R.$$
\end{lemma}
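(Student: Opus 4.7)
The plan is to compute both sides of the claimed equality using the standard projective resolution recalled just before Lemma \ref{lm:Extvan}. Concretely, for any $R$-module $T$ that resolution reads
\[
0 \longrightarrow P_2(T) \longrightarrow P_1(T) \longrightarrow P_0(T) \longrightarrow T \longrightarrow 0,
\]
with
\[
P_0(T) = \bigoplus_{i \in v(R)} Re_i \otimes_K e_i T, \quad
P_1(T) = \bigoplus_{b \in a(R)} Re_{t(b)} \otimes_K e_{s(b)} T, \quad
P_2(T) = \bigoplus_{c \in va(R)} Re_{t(c)} \otimes_K e_{s(c)} T.
\]
Each summand is projective, and since $\gldim R \leqslant 2$ by Lemma \ref{lm:Extvan}(\ref{lm:gldim}), the complex $P_\bullet(T)$ is an honest finite projective resolution of $T$.

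Next I would apply $\Hom_R(-,T')$ termwise and use the tensor–hom adjunction together with $\Hom_R(Re_v,T') \cong e_v T'$ to identify
\[
\Hom_R\!\left(Re_v \otimes_K e_u T,\, T'\right) \;\cong\; \Hom_K(e_u T,\, e_v T'),
\]
whose dimension is $f_u g_v$ where $\dimvec{f} = \dimv T$ and $\dimvec{g} = \dimv T'$. Summing over the indexing sets for $P_0,P_1,P_2$, the dimensions of the three terms of the complex $\Hom_R(P_\bullet(T),T')$ are
\[
\sum_{i\in v(R)} f_i g_i, \qquad \sum_{b\in a(R)} f_{s(b)} g_{t(b)}, \qquad \sum_{c\in va(R)} f_{s(c)} g_{t(c)},
\]
so that the alternating sum of these dimensions is exactly $\left<\dimv T, \dimv T'\right>_R$ as defined combinatorially.

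Finally, the cohomology of $\Hom_R(P_\bullet(T),T')$ computes $\Ext^i_R(T,T')$ by the definition of Ext via projective resolutions, and for a finite complex of finite-dimensional vector spaces the Euler characteristic of the complex equals the Euler characteristic of its cohomology. Therefore
\[
\sum_{i \in v(R)} f_i g_i - \sum_{b\in a(R)} f_{s(b)} g_{t(b)} + \sum_{c\in va(R)} f_{s(c)} g_{t(c)} \;=\; \sum_{i=0}^{2} (-1)^i [T,T']^i \;=\; \left<T,T'\right>_R,
\]
which is exactly the asserted identity.

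There is essentially no obstacle here; the only thing one has to be careful about is that the indexing sets in the displayed resolution are really the vertices, arrows and virtual arrows of $R$ (rather than of $Q$ as written) and that the Ext sum terminates at $i=2$, both of which are guaranteed by Lemma \ref{lm:Extvan}(\ref{lm:gldim}). Everything else is a bookkeeping exercise with tensor–hom adjunction and the invariance of Euler characteristics under passing to cohomology.
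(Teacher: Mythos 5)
Your proof is correct and is exactly the paper's argument: the paper's one-line proof ("apply $\Hom_R(-,T')$ to the standard resolution") is precisely the computation you carry out, and your bookkeeping with tensor--hom adjunction, the identification $\Hom_R(Re_v,T')\cong e_vT'$, and the invariance of the Euler characteristic under passing to cohomology fills in the details faithfully. Your remark that the indexing sets in the displayed resolution should be read as $v(R)$, $a(R)$, $va(R)$ is also an accurate observation about the paper's notation.
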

\begin{proof}
Compute $\left< T,T'\right>_R$ by applying the functor $\Hom_R(-,T')$ to the standard resolution of the $R$-module $T$.
\end{proof}
%\subsection{How much do we understand the quiver representation?}
%To understand the category $\rep(Q)$, one should understand indecomposable modules(as well as their relations). This has almost been done in the Auslander--Reiten theory. For example, when the quiver $Q$ is of Dynkin type, then there are only finite indecomposable representations(up to isomorphism) and each indecomposable representation corresponds to the positive root of Dynkin diagram. One can compute the Auslander--Reiten quiver by knitting algorithm and get the structure of indecomposable representations. Moreover, one can directly get Hom space between $M$ and $N$ by looking at nontrivial paths from $M$ to $N$.\footnote{These paths may be linear dependent, so it's not too easy.}
%
%We will use the Auslander--Reiten quiver to find ``good monomorphisms" in Section \ref{sec:Dynkin},\ref{sec:affine}. For more informations about Auslander--Reiten theory, one can see \cite{crawley1992lectures}.

%%%%%%%%%%%%%%%%%%%%%%%%%%%%%%%%%%%%%%%%%%%%%%%%%%%%%%%%%%%%%%%%%%%%%%%%%%%%%%%%%%%%%%%%%%%%%

\section{Main Theorem}\label{sec:mainthm}
In this section we state and prove the main theorems, which are essential in Section \ref{sec:Dynkin} and \ref{sec:affine}.
   
Let $\eta: 0\longrightarrow X \stackrel{\iota}{\longrightarrow} Y \stackrel{\pi}{\longrightarrow} S \longrightarrow 0$ be a short exact sequence in $\rep(Q)$. Consider the canonical \textbf{non-continuous} map
$$\Psi: \Grr(\Phi(Y)) \longrightarrow \Grr(\Phi(X)) \times \Grr(\Phi(S)) \qquad U \longmapsto \left([\Phi(\iota)]^{-1}(U),[\Phi(\pi)](U)   \right).$$
Denote the set
$$\Grr(\Phi(Y))_{\dimvec{f},\dimvec{g}}:= \Psi^{-1}\Big(\Grr_{\dimvec{f}}(\Phi(X)) \times \Grr_{\dimvec{g}}(\Phi(S))\Big)$$
and
let $\Psi_{\dimvec{f},\dimvec{g}}$ be the map $\Psi$ restricted to $\Grr(\Phi(Y))_{\dimvec{f},\dimvec{g}}$, i.e.,
$$\Psi_{\dimvec{f},\dimvec{g}}: \Grr(\Phi(Y))_{\dimvec{f},\dimvec{g}} \longrightarrow \Grr_{\dimvec{f}}(\Phi(X)) \times \Grr_{\dimvec{g}}(\Phi(S)).$$
\begin{remark}\label{rem:topo}
Even though $\Psi$ is not continuous, $\Psi_{\dimvec{f},\dimvec{g}}$ is continuous. Moreover, for any dimension vectors $\dimvec{f},\dimvec{g}$, the set
$$\Gr(\Phi(Y))_{\geqslant \dimvec{f}, \leqslant \dimvec{g} }:= \left\{ U \in \Gr(\Phi(Y)) \,\middle|\,\begin{aligned}
\dimv [\Phi(\iota)]^{-1}(U) &\geqslant \dimvec{f}\\ \dimv [\Phi(\pi)]\phantom{^{0}}(U) &\leqslant \dimvec{g}
\end{aligned}  \right\}$$
is closed in $\Gr(\Phi(Y))$. This gives us a filtration 
$$0= Z_0 \subset Z_1 \subset \cdots \subset Z_d=\Gr_{\dimvec{h}}(\Phi(Y))$$
with $Z_i$ closed and $Z_{i+1} \setminus Z_i$ isomorphic to $\Gr(\Phi(Y))_{ \dimvec{f},\dimvec{g}}$ for some $\dimvec{f},\dimvec{g}$. Therefore, from the affine pavings of $\Gr(\Phi(Y))_{ \dimvec{f},\dimvec{g}}$ (for every $\dimvec{f},\dimvec{g}$) one can construct one affine paving of $\Gr_{\dimvec{h}}(\Phi(Y))$.
\end{remark} 
%The goal of this section is to prove the following theorems:
\begin{theorem}\label{thm:main1}
	If $\eta$ splits, then $\Psi$ is surjective. Moreover, if $[S,X]^1=0$, then $\Psi_{\dimvec{f},\dimvec{g}}$ is a Zariski-locally trivial affine bundle of rank $\left< \dimvec{g},\dimv \Phi(X) - \dimvec{f}\right>_R$.
\end{theorem}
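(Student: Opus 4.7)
The plan is to use the splitting of $\eta$ to identify each fiber of $\Psi_{\dimvec{f},\dimvec{g}}$ over $(V,W)$ with the vector space $\Hom_R(W,\Phi(X)/V)$, compute its dimension via an Euler-form argument, and then globalize the fiberwise identification to a vector-bundle structure.

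\emph{Surjectivity.} Fix a splitting $Y=X\oplus S$ of $\eta$. Since $\Phi$ is additive and exact (Lemma \ref{lm:Extvan}(\ref{lm:functorisexact})), $\Phi(Y)=\Phi(X)\oplus \Phi(S)$, with $\Phi(\iota)$ and $\Phi(\pi)$ the canonical inclusion and projection. For any $(V,W)$ in the target, the submodule $U:=V\oplus W$ satisfies $\Psi(U)=(V,W)$.

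\emph{Fiber parametrization.} Fix $(V,W)$. Any $U$ in the fiber lies in $\Phi(\pi)^{-1}(W)=\Phi(X)\oplus W$, contains $V$, and the quotient $U/V\subseteq \Phi(X)/V\oplus W$ intersects $\Phi(X)/V$ trivially while projecting onto $W$. Therefore $U/V$ is the graph of a unique $R$-linear map $f\colon W\to \Phi(X)/V$, and conversely every such $f$ yields a valid $U$. This gives a set-theoretic bijection $\Psi_{\dimvec{f},\dimvec{g}}^{-1}(V,W)\cong \Hom_R(W,\Phi(X)/V)$.

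\emph{Dimension count.} To recover the claimed rank $\left<\dimvec{g},\dimv \Phi(X)-\dimvec{f}\right>_R$ via the homological interpretation of the Euler form, I need $\Ext^i_R(W,\Phi(X)/V)=0$ for $i=1,2$. The case $i=2$ is immediate from Lemma \ref{lm:Ext2van}. For $i=1$, I would chase the long exact sequences associated to $0\to W\to \Phi(S)\to \Phi(S)/W\to 0$ and $0\to V\to \Phi(X)\to \Phi(X)/V\to 0$, combining Lemma \ref{lm:Ext2van}, the hypothesis $[S,X]^1=0$ transferred via Lemma \ref{lm:Extvan}(\ref{lm:isoofext}), and $\projdim \Phi(S)\leq 1$ from Lemma \ref{lm:Extvan}(\ref{lm:projdim}) to sandwich $\Ext^1_R(W,\Phi(X)/V)$ between zero terms.

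\emph{Local triviality, and the main obstacle.} Constancy of the fiber dimension on $B:=\Grr_{\dimvec{f}}(\Phi(X))\times \Grr_{\dimvec{g}}(\Phi(S))$ promotes the tautological Hom-sheaf $\mathcal{H}om_R(\mathcal{W},(\Phi(X)\otimes \mathcal{O}_B)/\mathcal{V})$ built from the universal subbundles $\mathcal{V},\mathcal{W}$ to a vector bundle $\mathcal{H}$ of the asserted rank. The fiber construction globalizes to a $B$-morphism $\mathcal{H}\to \Grr(\Phi(Y))_{\dimvec{f},\dimvec{g}}$, whose inverse sends $U$ to $(U\cap \Phi(X),\Phi(\pi)(U),f)$ with $f(w):=a+V$ for any lift $(a,w)\in U$. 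The delicate step is upgrading this pointwise bijection to an algebraic isomorphism; it relies crucially on the global splitting of $\eta$, which defines the inverse uniformly in $(V,W)$. Without splitting, no uniform parametrization exists and one could expect at best a stratified affine paving rather than a Zariski-locally trivial affine bundle.
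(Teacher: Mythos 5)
Your proof is correct and reaches the same conclusions, but it exploits the splitting hypothesis more directly than the paper does, which makes parts of the argument more elementary. For surjectivity, you take $U=V\oplus W$ inside $\Phi(Y)=\Phi(X)\oplus\Phi(S)$, whereas the paper first characterizes $\Img\Psi$ by whether $\eta$ dies in $\Ext^1(W,\Phi(X)/V)$ (Lemma \ref{lm:split}), and then observes that $\eta=0$ makes this automatic (Corollary \ref{cor:img1}). For the fiber, your graph construction over the fixed direct-sum decomposition is a concrete instance of the torsor statement (Lemma \ref{lem:torsor}), which the paper proves for an arbitrary splitting of $\bar\eta$ so that it also serves Theorem \ref{thm:main2}. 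Your $\Ext^1$-vanishing chase recovers Corollary \ref{cor:0or1} in the special case $[S,X]^1=0$; the paper gets it from the surjectivity of $\Ext^1(\Phi(S),\Phi(X))\to\Ext^1(W,\Phi(X)/V)$, which is proved once and reused in the non-split case. For local triviality, both you and the paper defer the algebro-geometric bookkeeping; your observation that the global splitting gives a uniform inverse is a clean way to see it, while the paper (Remark \ref{rem:bundleprop}) instead follows the functor-of-points argument of \cite[Theorem 24]{irelli2019cell} so that the same globalization applies verbatim in Theorem \ref{thm:main2}. One small correction to your closing remark: without a splitting, Theorem \ref{thm:main2} still yields a Zariski-locally trivial affine bundle, just over $\Img\Psi_{\dimvec{f},\dimvec{g}}$ (a proper open subset of the product) rather than over the whole base; the uniform section there comes from a fixed scheme-level splitting of $\bar\eta$ rather than of $\eta$ itself.
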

\begin{theorem}[Generalizes {\cite[Theorem 32]{irelli2019cell}}]\label{thm:main2}
	When $\eta$ does not split and $[S,X]^1=1$, 
	$$\Img \Psi_{\dimvec{f},\dimvec{g}} = \bigg(\Grr_{\dimvec{f}}(\Phi(X)) \times \Grr_{\dimvec{g}}(\Phi(S)) \bigg) \setminus \bigg(\Grr_{\dimvec{f}}(\Phi(X_S)) \times \Grr_{\dimvec{g}-\dimv \Phi(S^X)}\left(\Phi(S/S^X)\right) \bigg)$$
	where 
	\begin{equation*}
	\begin{aligned}
	X_S:&= \max \left\{ M \subseteq X \,\middle|\; [S,X/M ]^1=1 \right\} \subseteq X,\\
	S^X:&= \max \left\{ M \subseteq S \,\middle|\; [M,X]^1=1 \right\} \subseteq S.
	\end{aligned}
	\end{equation*}
Moreover, $\Psi_{\dimvec{f},\dimvec{g}}$ is a Zarisky-locally trivial affine bundle of rank $\left< \dimvec{g},\dimv \Phi(X) - \dimvec{f}\right>_R$ over $\Img \Psi_{\dimvec{f},\dimvec{g}}$.
\end{theorem}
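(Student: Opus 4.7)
The plan is to adapt the strategy of \cite[Theorem 32]{irelli2019cell} from quiver Grassmannians to the bounded algebra $R$, leveraging the Ext-vanishing results from Lemmas \ref{lm:Extvan}--\ref{lm:Ext2van}. First I would identify the fiber of $\Psi_{\dimvec{f},\dimvec{g}}$ over a point $(V,W)$ with the set of splittings of a specific extension. Pulling the sequence $\Phi(\eta)$ back along $W\hookrightarrow \Phi(S)$ and pushing forward along $\Phi(X)\twoheadrightarrow \Phi(X)/V$ yields a short exact sequence
\[
0 \to \Phi(X)/V \to E_{V,W} \to W \to 0
\]
with Yoneda class $[\eta_{V,W}] \in \Ext^1_R(W, \Phi(X)/V)$. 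A preimage $U \in \Gr(\Phi(Y))$ of $(V,W)$ exists iff this class vanishes, and in that case the preimage is a torsor over $\Hom_R(W, \Phi(X)/V)$.

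Next I would observe that $\Ext^1_R(W, \Phi(X)/V)$ has dimension at most one. By Lemma \ref{lm:Ext2van}, both natural maps
\[
\Ext^1_R(\Phi(S), \Phi(X)) \twoheadrightarrow \Ext^1_R(W, \Phi(X)) \twoheadrightarrow \Ext^1_R(W, \Phi(X)/V)
\]
are surjective, and the source has dimension $[S,X]^1 = 1$ by Lemma \ref{lm:Extvan}(\ref{lm:isoofext}). Consequently $[\eta_{V,W}]\ne 0$ iff $\Ext^1_R(W,\Phi(X)/V) \ne 0$. The heart of the argument is then to translate this $R$-level nonvanishing into the claimed geometric condition on $V$ and $W$. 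Using the natural morphisms $\Phi(W_1)\hookrightarrow W$ and $\Phi(X)/V\twoheadrightarrow \Phi(X/V_d)$, where $V_d\subseteq X$ and $W_1\subseteq S$ denote the top resp.\ bottom pieces of the flags encoded by $V$ and $W$, one obtains a natural comparison of $[\eta_{V,W}]$ with the analogous $KQ$-level class in $\Ext^1_{KQ}(W_1, X/V_d)$ obtained by pulling back along $W_1\hookrightarrow S$ and pushing forward along $X\twoheadrightarrow X/V_d$. Combining the Ext-vanishing with the iso of Lemma \ref{lm:Extvan}(\ref{lm:isoofext}) and the original \cite[Theorem 32]{irelli2019cell} in $\rep(Q)$ (which characterizes nonvanishing of this $KQ$-class in terms of $X_S$ and $S^X$) then gives the claimed description of $\Img \Psi_{\dimvec{f},\dimvec{g}}$.

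The affine bundle structure is established as in Theorem~\ref{thm:main1}. Over the image, the fiber $\Hom_R(W, \Phi(X)/V)$ has constant dimension equal to the Euler form $\langle W, \Phi(X)/V\rangle_R = \langle \dimvec{g}, \dimv\Phi(X)-\dimvec{f}\rangle_R$, using that $\Ext^1_R(W, \Phi(X)/V) = 0$ there by construction and that $\Ext^2_R(W, \Phi(X)/V) = 0$ by Lemma \ref{lm:Ext2van}; local trivializations are then produced exactly as in Theorem~\ref{thm:main1} by choosing local sections of the universal splitting.

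The main obstacle is the reduction from the $R$-level to the $KQ$-level extension class. Because $V$ and $W$ are flags rather than ordinary subrepresentations, the algebra $R$ admits more submodules than $KQ$ does, and one must verify that none of the intermediate kernels and cokernels that appear in the long exact sequences derived from the inclusions $\Phi(W_1)\hookrightarrow W$ and $V\hookrightarrow \Phi(V_d)$ obstruct the identification of $[\eta_{V,W}]$ with the $KQ$-level class. This is where the precise form of the Ext-vanishing in Lemma \ref{lm:Ext2van}, applied in both arguments, plays a crucial role.
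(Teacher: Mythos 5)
Your proposal has the right skeleton, and it matches the paper on the bookkeeping parts: identifying the fiber over $(V,W)$ with the set of splittings of the pulled-back/pushed-forward class $[\eta_{V,W}]\in\Ext^1_R(W,\Phi(X)/V)$ (the paper's Lemma~\ref{lm:split} and Lemma~\ref{lem:torsor}), using the $\Ext^2$-vanishing of Lemma~\ref{lm:Ext2van} to get the surjections
$\Ext^1_R(\Phi(S),\Phi(X))\twoheadrightarrow\Ext^1_R(W,\Phi(X))\twoheadrightarrow\Ext^1_R(W,\Phi(X)/V)$
and hence the dichotomy $[W,\Phi(X)/V]^1\in\{0,1\}$ (Corollary~\ref{cor:0or1}), and the affine-bundle computation via the Euler form. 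Those parts are fine.

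The gap is exactly where you flag it, and it is more than a technicality: the reduction from the $R$-level class to the $KQ$-level class $\Ext^1_{KQ}(W_1,X/V_d)$ does not follow from the Ext-vanishing lemma alone. Your comparison maps run
\[
\Ext^1_R(W,\Phi(X)/V)\longrightarrow\Ext^1_R(\Phi(W_1),\Phi(X)/V)\longrightarrow\Ext^1_R(\Phi(W_1),\Phi(X/V_d))\cong\Ext^1_{KQ}(W_1,X/V_d),
\]
and since these all fit under the surjection out of $\Ext^1_R(\Phi(S),\Phi(X))$, the \emph{easy} direction (nonvanishing of the $KQ$-level class forces nonvanishing of the $R$-level class) does come out. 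But the direction you actually need for the image description is the converse: $[W_1,X/V_d]^1=0 \Rightarrow [W,\Phi(X)/V]^1=0$. For that you would need the image of $\eta$ under the composite above to be nonzero whenever $[W,\Phi(X)/V]^1\neq 0$, i.e.\ the composite must not kill the generator. The kernel of the first map is controlled by $\Ext^1_R(W/\Phi(W_1),\Phi(X)/V)$ and of the second by $\Ext^1_R(\Phi(W_1),\Phi(V_d)/V)$; neither of these vanishes for any reason supplied by Lemma~\ref{lm:Ext2van}, and in fact $W/\Phi(W_1)$ is again a subflag (of $\Phi(S/W_1)$), so the ``obstructing'' Ext group has exactly the same shape as the one you started from---the reduction does not terminate. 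This is the heart of the theorem, not a routine verification.

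The paper closes this gap by a genuinely different device. It does not compare $R$-level and $KQ$-level classes directly; instead it defines the extremal $R$-submodules $\widetilde{X_S}\subseteq\Phi(X)$ and $\widetilde{S^X}\subseteq\Phi(S)$ intrinsically and then proves $\widetilde{X_S}=\Phi(X_S)$ and $\widetilde{S^X}=\Phi(S^X)$ via Auslander--Reiten theory: picking a nonzero $f:X\to\tau S$, one has $X_S=\ker f$ by the AR formula, and---crucially---$\Phi$ commutes with the AR translation so that $\Phi(f):\Phi(X)\to\Phi(\tau S)\cong\tau\Phi(S)$ is the corresponding nonzero map on the $R$-side, giving $\widetilde{X_S}=\ker\Phi(f)=\Phi(\ker f)=\Phi(X_S)$ by exactness of $\Phi$. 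Once this identification is in hand, Lemma~\ref{lm:Ext2van} finishes the job through the surjectivity argument, and no direct comparison of extension classes is ever attempted. So to complete your proposal you would need to either prove the compatibility $\tau\Phi\cong\Phi\tau$ and run the paper's argument, or supply a genuinely new proof that the composite comparison map above does not annihilate the image of $\eta$; the latter is not addressed by your sketch.
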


We will spend the rest of the section proving these theorems. We investigate the image as well as the fiber of $\Psi$ respectively.
\begin{lemma}[Follows {\cite[Lemma 21]{irelli2019cell}}]\label{lm:split}
The element $(V,W) \in \Grr(\Phi(X)) \times \Grr(\Phi(S))$ lies in the image of $\Psi$ if and only if the canonical map $\Ext^1(\Phi(S),\Phi(X)) \longrightarrow \Ext^1(W,\Phi(X)/V)$ maps $\eta$ to 0.
\end{lemma}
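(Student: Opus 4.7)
The plan is to reinterpret the two conditions $\Phi(\iota)^{-1}(U)=V$ and $\Phi(\pi)(U)=W$ as the existence of a splitting of a suitable short exact sequence, and then identify that sequence with the image of $\eta$ under the canonical map. Concretely, I would first form the pullback of $\Phi(\eta)$ along the inclusion $W \hookrightarrow \Phi(S)$, which inside $\Phi(Y)$ is simply
$$U' := \Phi(\pi)^{-1}(W),$$
fitting into a short exact sequence $0\to \Phi(X)\to U'\to W\to 0$. Quotienting by the submodule $V\subseteq \Phi(X)\subseteq U'$ produces
$$\overline{\eta}: 0\longrightarrow \Phi(X)/V \longrightarrow U'/V \longrightarrow W \longrightarrow 0.$$
By the standard functorial description of $\Ext^1$ via pullback (along $W\hookrightarrow \Phi(S)$) and pushout (along $\Phi(X)\twoheadrightarrow \Phi(X)/V$), the class of $\overline{\eta}$ in $\Ext^1_R(W,\Phi(X)/V)$ is precisely the image of (the class of) $\eta$ under the canonical map in the statement. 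I would record this identification as the first step, using exactness of $\Phi$ from Lemma \ref{lm:Extvan}(\ref{lm:functorisexact}).

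Next I would establish the promised bijection between preimages $\Psi^{-1}(V,W)$ and splittings of $\overline{\eta}$. In one direction, given $U\in\Psi^{-1}(V,W)$, the condition $\Phi(\pi)(U)=W$ forces $U\subseteq U'$, and $\Phi(\iota)^{-1}(U)=V$ means $U\cap\Phi(X)=V$, so $V\subseteq U$ and the image $\overline{U}:=U/V\subseteq U'/V$ meets $\Phi(X)/V$ trivially while still surjecting onto $W$; hence $\overline{U}\to W$ is an isomorphism, i.e. $\overline{U}$ is a splitting of $\overline{\eta}$. Conversely, any splitting $\overline{U}\subseteq U'/V$ lifts under $U'\twoheadrightarrow U'/V$ to a submodule $U\subseteq U'\subseteq \Phi(Y)$ containing $V$, for which one checks directly that $U\cap\Phi(X)$ is the preimage of $\overline{U}\cap \Phi(X)/V=0$, hence equals $V$, and $\Phi(\pi)(U)=W$ since $\overline{U}\to W$ is surjective.

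Combining these two steps gives $\Psi^{-1}(V,W)\neq\emptyset$ if and only if $\overline{\eta}$ splits, which by the identification of the first step is equivalent to the image of $\eta$ in $\Ext^1_R(W,\Phi(X)/V)$ being zero. I do not expect serious obstacles here: the argument is a formal manipulation with pullbacks, pushouts and splittings, and the only subtlety is keeping track of which sequence represents the image of $\eta$ and verifying the intersection $U\cap\Phi(X)=V$ carefully; this is the step I would write out in most detail.
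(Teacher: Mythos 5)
Your proposal is correct and follows essentially the same route as the paper: pull back $\eta$ along $W\hookrightarrow\Phi(S)$ to get $0\to\Phi(X)\to\Phi(\pi)^{-1}(W)\to W\to 0$, quotient by $V$ to obtain $\bar\eta$, and observe that splittings of $\bar\eta$ correspond exactly to submodules $U\subseteq\Phi(Y)$ with $\Phi(\pi)(U)=W$ and $U\cap\Phi(X)=V$. The paper presents this via a single commutative diagram, while you spell out the bijection between splittings and such $U$ in slightly more detail, but the content is identical.
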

\begin{proof}
	The canonical map is defined as follows:
% https://q.uiver.app/?q=WzAsMjIsWzQsMCwiXFxQaGkoWCkiXSxbNCwxLCJcXFBoaShYKSJdLFs0LDIsIlxcUGhpKFgpL1YiXSxbNSwxLCJcXHBpXnstMX0oVykiXSxbNSwwLCJcXFBoaShZKSJdLFs1LDIsIlxccGleey0xfShXKS9WIl0sWzYsMSwiVyJdLFs2LDIsIlciXSxbNiwwLCJcXFBoaShTKSJdLFszLDAsIjAiXSxbNywwLCIwIl0sWzMsMSwiMCJdLFs3LDEsIjAiXSxbMywyLCIwIl0sWzcsMiwiMCJdLFsyLDAsIlxcRXh0XjEoXFxQaGkoUyksXFxQaGkoWCkpIl0sWzIsMSwiXFxFeHReMShXLFxcUGhpKFgpKSJdLFsyLDIsIlxcRXh0XjEoVyxcXFBoaShYKS9WKSJdLFswLDAsIlxcZXRhIl0sWzAsMiwiXFxiYXJ7XFxldGF9Il0sWzEsMCwiXFxpbiJdLFsxLDIsIlxcaW4iXSxbMSwyXSxbMSwwLCIiLDIseyJsZXZlbCI6Miwic3R5bGUiOnsiaGVhZCI6eyJuYW1lIjoibm9uZSJ9fX1dLFszLDRdLFszLDVdLFs2LDcsIiIsMCx7ImxldmVsIjoyLCJzdHlsZSI6eyJoZWFkIjp7Im5hbWUiOiJub25lIn19fV0sWzYsOF0sWzksMF0sWzAsNF0sWzQsOCwiXFxwaSJdLFs4LDEwXSxbMTEsMV0sWzEsM10sWzMsNl0sWzYsMTJdLFsxMywyXSxbMiw1XSxbNSw3XSxbNywxNF0sWzE1LDE2XSxbMTYsMTddLFsxOCwxOSwiIiwwLHsic3R5bGUiOnsidGFpbCI6eyJuYW1lIjoibWFwcyB0byJ9fX1dXQ==
\[\begin{tikzcd}
	\eta &[-30pt] \in &[-30pt] {\Ext^1(\Phi(S),\Phi(X))} & 0 & {\Phi(X)} & {\Phi(Y)} & {\Phi(S)} & 0 \\
	&& {\Ext^1(W,\Phi(X))} & 0 & {\Phi(X)} & {\pi^{-1}(W)} & W & 0 \\
	{\bar{\eta}} & \in & {\Ext^1(W,\Phi(X)/V)} & 0 & {\Phi(X)/V} & {\pi^{-1}(W)/V} & W & 0
	\arrow[from=2-5, to=3-5]
	\arrow[Rightarrow, no head, from=2-5, to=1-5]
	\arrow[from=2-6, to=1-6]
	\arrow[from=2-6, to=3-6]
	\arrow[Rightarrow, no head, from=2-7, to=3-7]
	\arrow[from=2-7, to=1-7]
	\arrow[from=1-4, to=1-5]
	\arrow[from=1-5, to=1-6]
	\arrow["\Phi(\pi)", from=1-6, to=1-7]
	\arrow[from=1-7, to=1-8]
	\arrow[from=2-4, to=2-5]
	\arrow[from=2-5, to=2-6]
	\arrow[from=2-6, to=2-7]
	\arrow[from=2-7, to=2-8]
	\arrow[from=3-4, to=3-5]
	\arrow[from=3-5, to=3-6]
	\arrow[from=3-6, to=3-7]
	\arrow[from=3-7, to=3-8]
	\arrow[from=1-3, to=2-3]
	\arrow[from=2-3, to=3-3]
	\arrow[maps to, from=1-1, to=3-1]
\end{tikzcd}\]
so $\bar{\eta}=0$ if and only if the last short exact sequence splits, that means, there exists a submodule $U \subseteq \Phi(Y)$, such that $\Phi(\pi)(U)=W$ and $U \cap \Phi(X) =V$.
\end{proof}

\begin{corollary}\label{cor:img1}
	Resume the notations of Lemma \ref{lm:split} When $\eta$ splits, then $\Psi$ is surjective.
\end{corollary}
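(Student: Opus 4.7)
The plan is to invoke Lemma \ref{lm:split} directly and observe that when $\eta$ splits the hypothesis there is automatically satisfied for every pair $(V,W)$. By Lemma \ref{lm:split}, membership of $(V,W)$ in $\Img\Psi$ is equivalent to the image of the class of $\eta$ (more precisely of $\Phi(\eta)$) under the canonical restriction/projection map
\[
\Ext^1_R\big(\Phi(S),\Phi(X)\big) \longrightarrow \Ext^1_R\big(W,\Phi(X)/V\big)
\]
being zero. So it suffices to show that the class of $\Phi(\eta)$ already vanishes in $\Ext^1_R(\Phi(S),\Phi(X))$.

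If $\eta$ splits in $\rep(Q)$, then there is a section $\sigma:S\to Y$ with $\pi\sigma=\Id_S$. Applying the additive functor $\Phi$ yields $\Phi(\pi)\,\Phi(\sigma)=\Id_{\Phi(S)}$, so $\Phi(\sigma)$ splits the sequence $\Phi(\eta)$. Equivalently (and this is how I would phrase it), Lemma \ref{lm:Extvan}(\ref{lm:functorisexact}) together with the identification of extension groups in Lemma \ref{lm:Extvan}(\ref{lm:isoofext}) identifies the class $[\Phi(\eta)]\in \Ext^1_R(\Phi(S),\Phi(X))$ with $[\eta]\in \Ext^1_{KQ}(S,X)$, which is zero by assumption.

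Since $[\Phi(\eta)]=0$, its image under any canonical map is zero, so by Lemma \ref{lm:split} every $(V,W)\in \Gr(\Phi(X))\times\Gr(\Phi(S))$ lies in $\Img\Psi$. There is no real obstacle here; the content of the corollary is already packaged inside Lemma \ref{lm:split}, and splitting of $\eta$ simply makes the obstruction class vanish.
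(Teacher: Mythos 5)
Your proof is correct and is exactly the argument the paper intends (the paper leaves the corollary unproved since it follows immediately from Lemma \ref{lm:split}): if $\eta$ splits then $\Phi(\eta)$ splits by exactness/additivity of $\Phi$, so the obstruction class is zero and every $(V,W)$ is in the image.
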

\begin{lemma}
	The canonical map $\Ext^1(\Phi(S),\Phi(X)) \longrightarrow \Ext^1(W,\Phi(X)/V)$ is surjective.
\end{lemma}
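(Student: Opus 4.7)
The plan is to factor the canonical map as the composition of two maps—one induced by the inclusion $W \hookrightarrow \Phi(S)$ and one induced by the quotient $\Phi(X) \twoheadrightarrow \Phi(X)/V$—and verify that each factor is surjective. Both surjectivity statements will reduce to an $\Ext^2$-vanishing claim that is already available from Lemma \ref{lm:Ext2van} (or equivalently from the bound $\injdim \Phi(X) \leqslant 1$ in Lemma \ref{lm:Extvan}).

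Concretely, applying $\Hom_R(-,\Phi(X))$ to the short exact sequence $0 \to W \to \Phi(S) \to \Phi(S)/W \to 0$ produces the piece of long exact sequence
\[
\Ext^1_R(\Phi(S),\Phi(X)) \longrightarrow \Ext^1_R(W,\Phi(X)) \longrightarrow \Ext^2_R(\Phi(S)/W,\Phi(X)),
\]
and the last term vanishes because $\injdim \Phi(X) \leqslant 1$ by Lemma \ref{lm:Extvan}(\ref{lm:projdim}). Then applying $\Hom_R(W,-)$ to $0 \to V \to \Phi(X) \to \Phi(X)/V \to 0$ yields
\[
\Ext^1_R(W,\Phi(X)) \longrightarrow \Ext^1_R(W,\Phi(X)/V) \longrightarrow \Ext^2_R(W,V),
\]
and the last term vanishes by Lemma \ref{lm:Ext2van}, since $W$ is a submodule of $\Phi(S)$. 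Composing the two resulting surjections gives the desired surjectivity.

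I do not anticipate any real obstacle: the statement is a formal consequence of the global-dimension bounds assembled in Lemmas \ref{lm:Extvan} and \ref{lm:Ext2van}. The only conceptual step is to recognize the canonical map as a \emph{pull-back along} $W \hookrightarrow \Phi(S)$ followed by a \emph{push-out along} $\Phi(X) \twoheadrightarrow \Phi(X)/V$, which is exactly the passage from the top row to the bottom row of the three-row diagram already displayed in the proof of Lemma \ref{lm:split}.
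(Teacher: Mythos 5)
Your proposal is correct and is essentially the same argument as the paper's: both factor the canonical map through $\Ext^1_R(W,\Phi(X))$ and use the vanishing of $\Ext^2_R(\Phi(S)/W,\Phi(X))$ and $\Ext^2_R(W,V)$ to conclude surjectivity of each factor. The only cosmetic difference is that you justify the first vanishing directly from $\injdim\Phi(X)\leqslant 1$, whereas the paper cites Lemma \ref{lm:Ext2van} for both; these are interchangeable since the $V=0$ case of that lemma is precisely the injective-dimension bound.
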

\begin{proof}
	By using the long exact sequence of extension groups and the fact that $\allowbreak\Ext^2(\Phi(S)/W,\Phi(X))=0$ and $\Ext^2(W,V)=0$ by Lemma \ref{lm:Ext2van}, the maps
	$$\Ext^1(\Phi(S),\Phi(X)) \longrightarrow \Ext^1(W,\Phi(X))\qquad \Ext^1(W,\Phi(X)) \longrightarrow \Ext^1(W,\Phi(X)/V)$$
	are both surjective. Thus the composition is also surjective.
\end{proof}
\begin{corollary}[]\label{cor:0or1}
	Let $W \subseteq \Phi(S), V \subseteq \Phi(X)$ be $R$-submodules, then
	$$[W,\Phi(X)/V]^1 \leqslant [\Phi(S),\Phi(X)]^1=[S,X]^1.$$
	In particular, when $[S,X]^1=1$, we get $[W,\Phi(X)/V]^1=0 \text{ or }1$; when $\eta$ generates $\Ext^1(S,X)$, we get
	$$(V,W) \in \Img \Psi \iff [W,\Phi(X)/V]^1=0.$$	
\end{corollary}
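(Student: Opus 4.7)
The plan is to assemble the statement from the two preceding results: the surjectivity of the canonical map $\Ext^1(\Phi(S),\Phi(X)) \longrightarrow \Ext^1(W,\Phi(X)/V)$ from the lemma directly above, and the splitting criterion from Lemma \ref{lm:split}.

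First, for the inequality $[W,\Phi(X)/V]^1 \leqslant [\Phi(S),\Phi(X)]^1$, I would simply note that a surjective $K$-linear map cannot increase dimension, applied to the canonical map of the preceding lemma. The identity $[\Phi(S),\Phi(X)]^1 = [S,X]^1$ is then immediate from Lemma \ref{lm:Extvan}(\ref{lm:isoofext}), which identifies $\Ext^1_R(\Phi(M),\Phi(N))$ with $\Ext^1_{KQ}(M,N)$ via the fully faithful exact functor $\Phi$. The ``in particular'' clause for $[S,X]^1=1$ then follows because a non-negative integer bounded above by $1$ is either $0$ or $1$.

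For the final equivalence, suppose $\eta$ generates $\Ext^1(S,X)$, hence (under the identification above) generates $\Ext^1_R(\Phi(S),\Phi(X))$, which is one-dimensional. By Lemma \ref{lm:split}, $(V,W) \in \Img \Psi$ if and only if the image $\bar\eta$ of $\eta$ under the canonical map vanishes in $\Ext^1_R(W,\Phi(X)/V)$. If $[W,\Phi(X)/V]^1 = 0$ the target is zero and $\bar\eta = 0$ automatically, giving the ``$\Leftarrow$'' direction. For ``$\Rightarrow$'', if $[W,\Phi(X)/V]^1 = 1$ then the surjection from the one-dimensional source $\langle\eta\rangle$ onto a one-dimensional target must send the generator $\eta$ to a nonzero element, so $\bar\eta \neq 0$ and $(V,W) \notin \Img \Psi$.

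There is no real obstacle here; the entire argument is bookkeeping on top of the previous lemma and Lemma \ref{lm:split}. The only thing to be careful about is the logical structure of the final biconditional: the surjectivity of $\Ext^1(\Phi(S),\Phi(X)) \twoheadrightarrow \Ext^1(W,\Phi(X)/V)$ is exactly what forces a generator of the source to map to a generator (equivalently, any nonzero element) of a one-dimensional target, which is the step that converts the splitting criterion into a purely numerical statement about $[W,\Phi(X)/V]^1$.
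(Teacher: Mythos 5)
Your proof is correct and follows exactly the route the paper intends; the paper in fact gives no explicit proof of this corollary, treating it as an immediate consequence of the surjectivity lemma directly above and the splitting criterion of Lemma \ref{lm:split}, which is precisely the bookkeeping you spell out.
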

 	
In the case where $\eta$ generates $\Ext^1(S,X)$, we want to describe $\Img \Psi$ more precisely. For this reason we need to introduce two new $R$-modules:
	\begin{equation*}
	\begin{aligned}
	\widetilde{X_S}:&= \max \left\{ V \subseteq \Phi(X) \,\middle|\; [\Phi(S),\Phi(X)/V ]^1=1 \right\} \subseteq \Phi(X),\\
	\widetilde{S^X}:&= \max \left\{ W \subseteq \Phi(S) \,\middle|\; [W,\Phi(X)]^1=1 \right\} \subseteq \Phi(S).
	\end{aligned}
	\end{equation*}
$\widetilde{X_S}$ and $\widetilde{S^X}$ are well-defined because of the following lemma:
\begin{lemma}[Follows {\cite[Lemma 27]{irelli2019cell}}]\
\begin{enumerate}[(i)] 
	\item Let $V,V' \subset \Phi(X)$ such that $[\Phi(S),\Phi(X)/V ]^1=[\Phi(S),\Phi(X)/V']^1=1 .$ Then $\allowbreak[\Phi(S),\Phi(X)/(V+V') ]^1=1$.
	\item  Let $W,W' \subset \Phi(S)$ such that $[W,\Phi(X)]^1=[W',\Phi(X)]^1=1 .$ Then $\allowbreak[W\cap W',\Phi(X)]^1=1$.
\end{enumerate}
\end{lemma}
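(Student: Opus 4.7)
The plan is a three-term dimension chase, carried out twice. Under the standing assumption $[S,X]^1 = 1$ of Theorem \ref{thm:main2}, my preliminary observation is that for any $U \subseteq \Phi(X)$ and any $W'' \subseteq \Phi(S)$, Lemma \ref{lm:Ext2van} applied to the defining short exact sequences $0 \to U \to \Phi(X) \to \Phi(X)/U \to 0$ and $0 \to W'' \to \Phi(S) \to \Phi(S)/W'' \to 0$ gives vanishing $\Ext^2$-terms and hence surjections
$$\Ext^1_R(\Phi(S), \Phi(X)) \twoheadrightarrow \Ext^1_R(\Phi(S), \Phi(X)/U), \qquad \Ext^1_R(\Phi(S), \Phi(X)) \twoheadrightarrow \Ext^1_R(W'', \Phi(X)).$$
Consequently every one-sided $\Ext^1$-group appearing in the argument has dimension at most $[\Phi(S),\Phi(X)]^1 = 1$.

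For part (i) I would use the standard pullback--pushout short exact sequence
$$0 \to \Phi(X)/(V \cap V') \xrightarrow{\;x \mapsto (x,x)\;} \Phi(X)/V \oplus \Phi(X)/V' \xrightarrow{\;(a,b) \mapsto a-b\;} \Phi(X)/(V+V') \to 0,$$
apply $\Hom_R(\Phi(S), -)$, and invoke Lemma \ref{lm:Ext2van} to kill the flanking term $\Ext^2_R(\Phi(S), \Phi(X)/(V \cap V'))$. The resulting right-exact three-term sequence reads
$$\Ext^1_R(\Phi(S), \Phi(X)/(V \cap V')) \to \Ext^1_R(\Phi(S), \Phi(X)/V) \oplus \Ext^1_R(\Phi(S), \Phi(X)/V') \to \Ext^1_R(\Phi(S), \Phi(X)/(V+V')) \to 0.$$
The middle term has dimension exactly $2$ by hypothesis, whereas the leftmost term has dimension at most $1$ by the preliminary observation. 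If the rightmost term vanished, the left arrow would have to be surjective, contradicting $1 < 2$. Combined with the upper bound of $1$ on the rightmost term, this forces $[\Phi(S), \Phi(X)/(V+V')]^1 = 1$.

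The proof of (ii) is the exact dual: apply $\Hom_R(-, \Phi(X))$ to $0 \to W \cap W' \to W \oplus W' \to W + W' \to 0$, and use Lemma \ref{lm:Ext2van} to obtain $\Ext^2_R(W+W', \Phi(X)) = 0$, which holds because $W + W' \subseteq \Phi(S)$. The identical pigeonhole on the resulting three-term right-exact sequence, now with $\Ext^1_R(W \cap W', \Phi(X))$ on the right, yields $[W \cap W', \Phi(X)]^1 = 1$. The one step that deserves care is the a priori upper bound of $1$ on all the flanking $\Ext^1$-terms, which is precisely the reason the preliminary surjections from Lemma \ref{lm:Ext2van} are worth recording at the outset; once they are in hand the rest is forced.
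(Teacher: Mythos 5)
Your proof is correct and follows essentially the same route as the paper: the same pullback--pushout short exact sequence, the functor $\Hom_R(\Phi(S),-)$ (resp.\ $\Hom_R(-,\Phi(X))$ for (ii)), and the same dimension count forcing the last $\Ext^1$-term to be nonzero and hence equal to $1$. The only cosmetic difference is that you re-derive the upper bound $\leqslant 1$ on the flanking $\Ext^1$-terms directly from Lemma \ref{lm:Ext2van}, whereas the paper cites Corollary \ref{cor:0or1}, which records exactly these bounds.
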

\begin{proof}
We only prove (i). (ii) is similar.

From the short exact sequence 
$$0 \longrightarrow \Phi(X)/(V\cap V') \longrightarrow \Phi(X)/V \oplus \Phi(X)/V' \longrightarrow \Phi(X)/(V+V') \longrightarrow 0,$$
we get the long exact sequence
$$\hspace{-0.4cm}\cdots\rightarrow \Ext^1\!\!\left(\Phi(S),\textstyle\frac{\Phi(X)}{V\cap V'}\right) \rightarrow \Ext^1\!\!\left(\Phi(S),\textstyle\frac{\Phi(X)}{V}\right) \oplus \;\Ext^1\!\!\left(\Phi(S),\textstyle\frac{\Phi(X)}{V'}\right) \rightarrow \Ext^1\!\!\left(\Phi(S),\textstyle\frac{\Phi(X)}{V+ V'}\right) \rightarrow\cdots.$$
By Corollary \ref{cor:0or1}, $[\Phi(S),\Phi(X)/(V\cap V')]^1\leqslant 1, \; [\Phi(S),\Phi(X)/(V+V')]^1\leqslant 1$, and this forces $[\Phi(S),\Phi(X)/(V+V')]^1= 1$.
\end{proof}

\begin{lemma}[Follows {\cite[Lemma 31(1)(2)]{irelli2019cell}}, with the same proof]\label{lemma:second_description_XS}
Let $\tau$ be the Auslander--Reiten translation.\\
Let $f:X \longrightarrow \tau S$ be a non-zero morphism,\footnote{Since $X$ is not injective, $[X,\tau S]=[S,X]^1=1$, $f$ is uniquely determined up to a constant.} then $X_S=\ker (f)$;\\
 also, $\Phi(f): \Phi(X) \longrightarrow \Phi(\tau S)$ is a non-zero morphism, $\widetilde{X_S}=\ker (\Phi(f))$.
\end{lemma}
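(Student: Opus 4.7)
The plan is to use Auslander--Reiten duality to rewrite each $\Ext$ condition as a $\Hom$ condition and then exploit that the relevant Hom space is one-dimensional. For the first assertion, AR duality in the hereditary category $\rep(Q)$ gives $[S,X/M]^{1}=[X/M,\tau S]$ for every $M\subseteq X$. A morphism $X/M\to\tau S$ is precisely a morphism $X\to\tau S$ annihilating $M$; since $\Hom_{KQ}(X,\tau S)=Kf$, this dimension is $1$ when $M\subseteq\ker f$ and $0$ otherwise. Hence the defining family of $X_{S}$ is exactly $\{M\subseteq\ker f\}$, whose maximum is $\ker f$.

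For the second assertion, the easy inclusion $\ker\Phi(f)\subseteq\widetilde{X_{S}}$ follows from exactness and full faithfulness of $\Phi$ (Lemma~\ref{lm:Extvan}~(\ref{lm:functorisexact})): one has $\ker\Phi(f)=\Phi(\ker f)=\Phi(X_{S})$, hence $\Phi(X)/\ker\Phi(f)\cong\Phi(X/X_{S})$, and
$$[\Phi(S),\Phi(X)/\ker\Phi(f)]^{1}=[S,X/X_{S}]^{1}=1$$
by Lemma~\ref{lm:Extvan}~(\ref{lm:isoofext}) together with the first assertion, so $\ker\Phi(f)$ lies in the defining family of $\widetilde{X_{S}}$. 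Nontriviality of $\Phi(f)$ is immediate from faithfulness of $\Phi$.

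For the converse $\widetilde{X_{S}}\subseteq\ker\Phi(f)$, take any $V\subseteq\Phi(X)$ with $[\Phi(S),\Phi(X)/V]^{1}=1$. Applying $\Hom_{R}(\Phi(S),-)$ to $0\to V\to\Phi(X)\to\Phi(X)/V\to 0$ and using $\Ext^{2}_{R}(\Phi(S),V)=0$ from Lemma~\ref{lm:Ext2van} produces a surjection from $\Ext^{1}_{R}(\Phi(S),\Phi(X))=K\eta$ onto $\Ext^{1}_{R}(\Phi(S),\Phi(X)/V)$ whose kernel is the image of the map $\alpha:\Ext^{1}_{R}(\Phi(S),V)\to K\eta$; the hypothesis $[\Phi(S),\Phi(X)/V]^{1}=1$ forces $\alpha=0$. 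By Auslander--Reiten duality in $\Mod(R)$ (valid for $\Phi(S)$ since $\projdim\Phi(S)\leqslant 1$ by Lemma~\ref{lm:Extvan}~(\ref{lm:projdim})), this translates into the existence of a nonzero morphism $\Phi(X)/V\to\Phi(\tau S)$, which lifts along the quotient $\Phi(X)\twoheadrightarrow\Phi(X)/V$ to a nonzero element of $\Hom_{R}(\Phi(X),\Phi(\tau S))=\Hom_{KQ}(X,\tau S)=Kf$ by full faithfulness. Hence $V\subseteq\ker\Phi(f)$.

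The main technical obstacle is the last step, namely transporting Auslander--Reiten duality under $\Phi$: since $R$ is not hereditary, $\tau_{R}\Phi(S)$ need not coincide with $\Phi(\tau S)$ on the nose, and one must control the injective-stable correction in the formula $D\Ext^{1}_{R}(\Phi(S),-)\cong\overline{\Hom}_{R}(-,\tau_{R}\Phi(S))$. A concrete workaround is to realize $\eta$ as the pullback of the AR sequence terminating in $S$ along $f$, apply $\Phi$, and construct an explicit lift of $\eta$ to $\Ext^{1}_{R}(\Phi(S),V)$ whenever $V\not\subseteq\ker\Phi(f)$; once this identification is secured the argument collapses to the one-dimensional $\Hom$ calculation above, mirroring \cite[Lemma 31]{irelli2019cell}.
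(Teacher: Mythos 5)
Your argument follows the same route as the paper's: Auslander--Reiten duality turns the $\Ext^1$-conditions into $\Hom$-conditions into a one-dimensional space, and full faithfulness of $\Phi$ transports the computation to $\Mod(R)$. Part one and the inclusion $\ker\Phi(f)\subseteq\widetilde{X_S}$ are fine (the latter is a nice explicit use of exactness and of Lemma \ref{lm:Extvan}(\ref{lm:isoofext}) that the paper leaves implicit). The only substantive difference is organisational: you argue by double inclusion, whereas the paper writes a single chain of isomorphisms
$\Ext^1(\Phi(S),\Phi(X)/V)^{\vee}\cong\Homup(\Phi(X)/V,\tau_R\Phi(S))\cong\Homup(\Phi(X)/V,\Phi(\tau S))\cong\{g\in\Hom(\Phi(X),\Phi(\tau S)):g|_V=0\}$.
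Incidentally, the computation showing $\alpha=0$ is not used anywhere afterwards; the existence of a nonzero map $\Phi(X)/V\to\tau_R\Phi(S)$ already follows from $[\Phi(S),\Phi(X)/V]^1\neq 0$ and the AR formula, which holds for any finite-dimensional algebra.

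The step you flag as the ``main technical obstacle'' is, however, left genuinely open in your write-up, and it is exactly the second isomorphism in the paper's chain: $\tau_R\Phi(S)\cong\Phi(\tau S)$. This is not something to be circumvented by the pullback construction you sketch (which, as written, you do not carry out and which would still require controlling the injective-stable correction); it is a real input that must be supplied once and for all. It cannot be read off naively from minimal projective presentations, since the Nakayama functors do not commute with $\Phi$ on the nose ($\nu_R\Phi(P(i))=I((i,1))$ while $\Phi(\nu_{KQ}P(i))=I((i,d))$). The intended source is \cite{irelli2019cell}, where the commutation of $\Phi$ with the AR translate is established for the algebra $R_d$; the identical computation works for $R_{d,\operatorname{str}}$. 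Until you either cite that fact or prove it, the inclusion $\widetilde{X_S}\subseteq\ker\Phi(f)$ is incomplete; with it in hand, your argument closes and agrees with the paper's.
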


\begin{proof}
For any $M \subseteq X$, we have
\begin{equation*}
\begin{aligned}
  \Ext^1(S,X/M)^{\vee}\cong\;& \Homup(X/M,\tau S) \\ 
  \cong\;& \left\{ g \in \Hom(X,\tau S) \middle| \,g|_M=0 \right\} \\ 
  \cong\;& \begin{cases}
  \mathbb{C}, & M \subseteq \ker f\\
  0, & M \nsubseteq \ker f,
  \end{cases} \\ 
\end{aligned}
\end{equation*}
so $[S,X/M]^1=1$ exactly when $M \subseteq \ker f$. Thus $X_S=\ker f$.

For $\Phi(f)$ it is similar. For any $V \subseteq \Phi(X)$, we have
\begin{equation*}
\begin{aligned}
  \Ext^1(\Phi(S),\Phi(X)/V)^{\vee}\cong\;& \Homup(\Phi(X)/V,\tau \Phi(S)) \\ 
  \cong\;& \Homup(\Phi(X)/V,\Phi(\tau S)) \\   
  \cong\;& \left\{ g \in \Hom(\Phi(X), \Phi(\tau S)) \middle| \,g|_V=0 \right\} \\ 
  \cong\;& \begin{cases}
  \mathbb{C}, & V \subseteq \ker \Phi(f)\\
  0, & V \nsubseteq \ker \Phi(f),
  \end{cases} \\ 
\end{aligned}
\end{equation*}
so $[\Phi(S),\Phi(X)/V]^1=1$ exactly when $V \subseteq \ker \Phi(f)$. Thus $\widetilde{X_S}=\ker (\Phi(f))$.
\end{proof}
\begin{corollary}
	$\widetilde{X_S}=\Phi(X_S)$\textcolor{black}{(since $\widetilde{X_S}=\ker (\Phi(f))=\Phi(\ker(f))=\Phi(X_S)$)}.
\end{corollary}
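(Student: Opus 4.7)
The plan is to chain together the identities established in the preceding lemma with the exactness of $\Phi$. Specifically, the lemma gives two descriptions: $X_S = \ker(f)$ for a non-zero morphism $f: X \longrightarrow \tau S$ in $\rep(Q)$, and $\widetilde{X_S} = \ker(\Phi(f))$ as a submodule of $\Phi(X)$. So the task reduces entirely to showing that $\Phi$ commutes with the formation of this particular kernel, i.e.\ $\Phi(\ker f) = \ker(\Phi(f))$ as submodules of $\Phi(X)$.

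First I would invoke Lemma \ref{lm:Extvan}(\ref{lm:functorisexact}), which tells us that the canonical functor $\Phi: \rep(Q) \longrightarrow \Mod(R)$ is exact (and fully faithful). Exactness ensures that $\Phi$ sends the short exact sequence
\[
0 \longrightarrow \ker(f) \longrightarrow X \stackrel{f}{\longrightarrow} \Img(f) \longrightarrow 0
\]
to the short exact sequence
\[
0 \longrightarrow \Phi(\ker(f)) \longrightarrow \Phi(X) \stackrel{\Phi(f)}{\longrightarrow} \Phi(\Img(f)) \longrightarrow 0,
\]
which immediately identifies $\Phi(\ker(f))$ with $\ker(\Phi(f))$ as submodules of $\Phi(X)$.

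Combining the two identifications then yields $\widetilde{X_S} = \ker(\Phi(f)) = \Phi(\ker(f)) = \Phi(X_S)$, as desired. There is no real obstacle here, since the previous lemma has already done all the substantive work of pinning down both modules as kernels of the same morphism (after applying $\Phi$); the corollary is essentially just a bookkeeping remark using exactness of $\Phi$. The parenthetical note already appearing in the statement confirms this is the intended one-line argument.
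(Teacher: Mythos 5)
Your argument is exactly the paper's intended one-line proof: the preceding lemma identifies $X_S=\ker(f)$ and $\widetilde{X_S}=\ker(\Phi(f))$, and exactness of $\Phi$ from Lemma \ref{lm:Extvan}(\ref{lm:functorisexact}) gives $\ker(\Phi(f))=\Phi(\ker(f))$. The proposal is correct and takes essentially the same approach as the paper.
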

By a dual argument, one can show that $\widetilde{S^X}=\Phi(S^X)$.
\begin{lemma}[Follows {\cite[Lemma 31(6)]{irelli2019cell}}]
For $V \subseteq \Phi(X)$ and $W \subseteq \Phi(S)$, we have 
$$[W,\Phi(X)/V]^1=0 \iff V \nsubseteq \Phi(X_S) \text{ or }W \nsupseteq \Phi(S^X).$$
\end{lemma}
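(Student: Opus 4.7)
The lemma has two directions. For $(\Leftarrow)$ I argue by contraposition. If $V\not\subseteq \Phi(X_S)$, the preceding lemma combined with Corollary \ref{cor:0or1} gives $[\Phi(S),\Phi(X)/V]^1=0$. Applying $\Hom_R(-,\Phi(X)/V)$ to the short exact sequence $0\to W\to \Phi(S)\to \Phi(S)/W\to 0$ and using Lemma \ref{lm:Ext2van} to kill $\Ext^2_R(\Phi(S)/W,\Phi(X)/V)$ yields a surjection $\Ext^1_R(\Phi(S),\Phi(X)/V)\twoheadrightarrow \Ext^1_R(W,\Phi(X)/V)$, forcing $[W,\Phi(X)/V]^1=0$. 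The case $W\not\supseteq \Phi(S^X)$ is dual, via $\Hom_R(W,-)$ applied to $0\to V\to \Phi(X)\to \Phi(X)/V\to 0$ and the vanishing $\Ext^2_R(W,V)=0$.

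For $(\Rightarrow)$, assume $V\subseteq \Phi(X_S)$ and $W\supseteq \Phi(S^X)$. Two more surjections of the same flavor, now coming from $\Phi(S^X)\hookrightarrow W$ and $V\hookrightarrow \Phi(X_S)\hookrightarrow \Phi(X)$, reduce $[W,\Phi(X)/V]^1\neq 0$ to $[\Phi(S^X),\Phi(X)/\Phi(X_S)]^1\neq 0$. The required vanishings $\Ext^2_R(W/\Phi(S^X),\Phi(X)/V)=0$ and $\Ext^2_R(\Phi(S^X),\Phi(X_S)/V)=0$ both follow from Lemma \ref{lm:Ext2van}, the latter after noting that $\Phi(X_S)/V$ is of the form $\Phi(-)$ modulo a submodule. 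By exactness of $\Phi$ together with Lemma \ref{lm:Extvan}(\ref{lm:isoofext}), the remaining group is $\Ext^1_{KQ}(S^X,X/X_S)$, so it suffices to establish $[S^X,X/X_S]^1\neq 0$ in $\rep(Q)$.

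For this final claim I run a diagram chase on the commutative square of $\Ext^1_{KQ}$-groups with first arguments in $\{S,S^X\}$ and second arguments in $\{X_S,X\}$, where the horizontal maps are pushforwards $i_*$ along $X_S\hookrightarrow X$ and the vertical maps are pullbacks $j^*$ along $S^X\hookrightarrow S$. Since $KQ$ is hereditary, $\Ext^2_{KQ}=0$, so both $j^*$ are surjective. The long exact sequence for $0\to X_S\to X\to X/X_S\to 0$ exhibits the next map $\Ext^1(S,X)\to \Ext^1(S,X/X_S)$ as a surjection between two $1$-dimensional spaces (using $[S,X]^1=[S,X/X_S]^1=1$; the former is the standing assumption and the latter comes from the characterization of $X_S$ applied to $M=X_S$), hence an isomorphism, killing the image of the top $i_*$. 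Commutativity of the square together with the surjectivity of the left $j^*$ then forces the bottom $i_*\colon \Ext^1(S^X,X_S)\to \Ext^1(S^X,X)$ to be zero as well, whence
$$\Ext^1(S^X,X/X_S)=\operatorname{coker}(i_*)=\Ext^1(S^X,X)\cong K,$$
nonzero by the dual characterization of $S^X$ applied to $N=S^X$.

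The main obstacle will be the bookkeeping of the four $\Ext^2_R$-vanishing applications in the two rounds of reductions (in particular, expressing $\Phi(X_S)/V$ correctly so that Lemma \ref{lm:Ext2van} applies); once these reductions are set up, the final computation in $\rep(Q)$ is a short diagram chase that uses only the hereditary property of $Q$ and the defining properties of $X_S$ and $S^X$.
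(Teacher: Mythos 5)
Your proof is correct and follows the same overall strategy as the paper: the $\Leftarrow$ direction via the maximality characterization of $\Phi(X_S)$ (resp.\ $\Phi(S^X)$) together with surjectivity of restriction maps coming from the $\Ext^2$-vanishing of Lemma \ref{lm:Ext2van}, and the $\Rightarrow$ direction via the monotonicity $[W,\Phi(X)/V]^1\geqslant[\Phi(S^X),\Phi(X)/\Phi(X_S)]^1=[S^X,X/X_S]^1$. The one genuine difference is the final ingredient: the paper simply cites \cite[Lemma 31(5)]{irelli2019cell} for $[S^X,X/X_S]^1=1$, whereas you prove it from scratch by a diagram chase in $\rep(Q)$ using heredity of $KQ$, the standing assumption $[S,X]^1=1$, and the defining properties of $X_S$ and $S^X$. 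Your chase is correct (the top pushforward vanishes because $\Ext^1(S,X)\to\Ext^1(S,X/X_S)$ is an isomorphism of one-dimensional spaces, and surjectivity of the left pullback then kills the bottom pushforward), and it makes the argument self-contained at the cost of a few extra lines. One small remark: for the vanishing $\Ext^2_R(\Phi(S^X),\Phi(X_S)/V)=0$ the cleanest justification is that the first argument $\Phi(S^X)$ is a submodule of $\Phi(S)$, i.e.\ the first half of Lemma \ref{lm:Ext2van}; your aside about the shape of the second argument is unnecessary, though also valid via the second half of that lemma.
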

\begin{proof}
$\Leftarrow$: Without loss of generality suppose $V \nsubseteq \Phi(X_S)$, then
$$V \nsubseteq \Phi(X_S) \iff [\Phi(S),\Phi(X)/V ]^1=0 \Rightarrow  [W,\Phi(X)/V ]^1=0.$$

\hspace{0.8cm}$\Rightarrow$: If not, then $V \subseteq \Phi(X_S) \text{ and }W \supseteq \Phi(S^X)$, and\footnote{$[S^X,X/X_S]^1=1$ follows from \cite[Lemma 31(5)]{irelli2019cell}.} 
\begin{align*}
        &[W,\Phi(X)/V]^1 \geqslant [\Phi(S^X),\Phi(X)/\Phi(X_S)]^1 =[S^X,X/X_S]^1=1. \qedhere
\end{align*}
\end{proof}
\begin{corollary}\label{cor:img2}
When $\eta$ generates $\Ext^1(S,X)$, we have 
	$$\Img \Psi_{\dimvec{f},\dimvec{g}} = \bigg(\Grr_{\dimvec{f}}(\Phi(X)) \times \Grr_{\dimvec{g}}(\Phi(S)) \bigg) \setminus \bigg(\Grr_{\dimvec{f}}(\Phi(X_S)) \times \Grr_{\dimvec{g}-\dimv \Phi(S^X)}\left(\Phi(S/S^X)\right) \bigg).$$
\end{corollary}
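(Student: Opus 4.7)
\smallskip

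The plan is to assemble the corollary from the two preceding results: Corollary \ref{cor:0or1} and the lemma characterizing when $[W,\Phi(X)/V]^1=0$. Since $\eta$ generates $\Ext^1(S,X)$, Corollary \ref{cor:0or1} gives the equivalence
$$(V,W)\in\Img\Psi \iff [W,\Phi(X)/V]^1=0,$$
and the lemma just proved rewrites the right-hand side as
$$V\nsubseteq \Phi(X_S)\ \text{or}\ W\nsupseteq \Phi(S^X).$$
So a pair $(V,W)\in\Grr_{\dimvec{f}}(\Phi(X))\times \Grr_{\dimvec{g}}(\Phi(S))$ fails to lie in $\Img\Psi_{\dimvec{f},\dimvec{g}}$ precisely when $V\subseteq \Phi(X_S)$ and $W\supseteq \Phi(S^X)$ simultaneously.

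Next I would identify this ``bad'' locus with the product appearing in the statement. For the first factor the identification is tautological: the submodules $V\subseteq \Phi(X)$ of dimension vector $\dimvec{f}$ that happen to lie inside $\Phi(X_S)$ are exactly the points of $\Grr_{\dimvec{f}}(\Phi(X_S))$. For the second factor I would use that $\Phi$ is exact (Lemma \ref{lm:Extvan}(\ref{lm:functorisexact})) applied to $0\to S^X\to S\to S/S^X\to 0$, yielding an identification $\Phi(S)/\Phi(S^X)\cong \Phi(S/S^X)$. The standard correspondence theorem then gives a bijection between submodules $W\subseteq \Phi(S)$ containing $\Phi(S^X)$ and submodules of $\Phi(S/S^X)$, sending $W\mapsto W/\Phi(S^X)$. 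Under this bijection the condition $\dimv W=\dimvec{g}$ translates to $\dimv (W/\Phi(S^X))=\dimvec{g}-\dimv \Phi(S^X)$, so the locus of such $W$ is exactly $\Grr_{\dimvec{g}-\dimv\Phi(S^X)}(\Phi(S/S^X))$.

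Combining the two identifications, the complement of $\Img\Psi_{\dimvec{f},\dimvec{g}}$ inside $\Grr_{\dimvec{f}}(\Phi(X))\times \Grr_{\dimvec{g}}(\Phi(S))$ is exactly $\Grr_{\dimvec{f}}(\Phi(X_S))\times \Grr_{\dimvec{g}-\dimv\Phi(S^X)}(\Phi(S/S^X))$, which is the desired formula.

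There is no real obstacle here: the work was already carried out in the two preceding lemmas. The only point requiring minor care is ensuring that the bijection for the second factor is compatible with dimension vectors and that $\Phi(S/S^X)\cong \Phi(S)/\Phi(S^X)$, both of which follow directly from the exactness of $\Phi$. Thus the corollary is essentially a bookkeeping statement that repackages the combined output of Corollary \ref{cor:0or1} and the preceding lemma in geometric language.
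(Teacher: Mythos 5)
Your proposal is correct and follows exactly the route the paper intends: the corollary is the immediate combination of Corollary \ref{cor:0or1} (which identifies $\Img\Psi$ with the vanishing locus of $[W,\Phi(X)/V]^1$ once $\eta$ generates $\Ext^1(S,X)$) with the preceding lemma characterizing that vanishing, plus the standard identification of $\{W\supseteq\Phi(S^X)\}$ with submodules of $\Phi(S)/\Phi(S^X)\cong\Phi(S/S^X)$ via exactness of $\Phi$. Your care about the dimension-vector shift $\dimvec{g}-\dimv\Phi(S^X)$ in the second factor is exactly the only bookkeeping point needed.
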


\begin{lemma}\label{lem:torsor}
	For $(V,W) \in \Img \Psi$, the preimage of $(V,W)$ is a torsor of $\,\Hom_{R}(W,\Phi(X)/V)$. Hence, there is a non-canonical isomorphism
	$$\Psi^{-1}((V,W)) \cong \Hom_{R}(W,\Phi(X)/V).$$
\end{lemma}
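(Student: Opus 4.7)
The plan is to reduce the description of $\Psi^{-1}((V,W))$ to the classical fact that the set of splittings of a split short exact sequence is a torsor over the relevant $\Hom$--group. First, unravel the definition: $\Psi^{-1}((V,W))$ is the set of $R$--submodules $U\subseteq\Phi(Y)$ with $U\cap\Phi(X)=V$ and $\Phi(\pi)(U)=W$. Set $\tilde U:=\Phi(\pi)^{-1}(W)\subseteq\Phi(Y)$; any admissible $U$ lies in $\tilde U$ and contains $V$. Passing to the quotient $\tilde U/V$ gives the short exact sequence
$$0\longrightarrow \Phi(X)/V\longrightarrow \tilde U/V\longrightarrow W\longrightarrow 0,$$
which, by tracing through the proof of Lemma \ref{lm:split}, is exactly a representative of the image $\bar\eta$ of $\eta$ under the canonical map $\Ext^1(\Phi(S),\Phi(X))\to\Ext^1(W,\Phi(X)/V)$.

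Next, I would set up the bijection between admissible $U$ and sections. A submodule $U\subseteq\tilde U$ containing $V$, with $U/V\cap(\Phi(X)/V)=0$ and $U/V$ surjecting onto $W$, is the same data as the graph of a splitting $\sigma\colon W\to \tilde U/V$ of the sequence above: given $U$, restrict the projection $\tilde U/V\to W$ to $U/V$ to obtain an isomorphism whose inverse is $\sigma$; conversely, given $\sigma$, put $U/V:=\Img\sigma$. Since $(V,W)\in\Img\Psi$, Lemma \ref{lm:split} says $\bar\eta=0$, so the sequence splits and at least one $\sigma$ exists, i.e.\ $\Psi^{-1}((V,W))\neq\varnothing$.

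Finally, I would invoke the standard torsor structure. If $\sigma_0$ is a fixed splitting, then every other splitting has the form $\sigma_0+\iota_V\circ\varphi$ for a unique $\varphi\in\Hom_R(W,\Phi(X)/V)$, where $\iota_V\colon\Phi(X)/V\hookrightarrow\tilde U/V$ is the inclusion; conversely every such $\sigma_0+\iota_V\circ\varphi$ is again a splitting. This identifies the set of splittings with $\Hom_R(W,\Phi(X)/V)$, the identification depending on the choice of $\sigma_0$, yielding the claimed non-canonical isomorphism
$$\Psi^{-1}((V,W))\;\cong\;\Hom_R(W,\Phi(X)/V).$$

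The only non-trivial point is checking that the two descriptions of $U$ (as a submodule of $\Phi(Y)$ versus as a splitting of an induced sequence) really match under the quotient by $V$; this is bookkeeping with the isomorphism theorems. I do not expect any serious obstacle: Ext-vanishing of Lemma \ref{lm:Ext2van} is not needed here, since we are only using that the specific extension class $\bar\eta$ vanishes, which is exactly the condition $(V,W)\in\Img\Psi$ by Lemma \ref{lm:split}.
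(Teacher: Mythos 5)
Your proposal is correct and takes essentially the same approach as the paper: both identify $\Psi^{-1}((V,W))$ with the set of splittings of the induced short exact sequence $0\to\Phi(X)/V\to\Phi(\pi)^{-1}(W)/V\to W\to 0$ representing $\bar\eta$, and then invoke the standard fact that the splittings of a split extension form a torsor over $\Hom_R(W,\Phi(X)/V)$. The only difference is that you spell out the bijection between admissible submodules $U$ and sections $\sigma$ in slightly more detail, which the paper leaves implicit.
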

\begin{proof}Recall the commutative diagram
% https://q.uiver.app/?q=WzAsMjIsWzQsMCwiXFxQaGkoWCkiXSxbNCwxLCJcXFBoaShYKSJdLFs0LDIsIlxcUGhpKFgpL1YiXSxbNSwxLCJcXHBpXnstMX0oVykiXSxbNSwwLCJcXFBoaShZKSJdLFs1LDIsIlxccGleey0xfShXKS9WIl0sWzYsMSwiVyJdLFs2LDIsIlciXSxbNiwwLCJcXFBoaShTKSJdLFszLDAsIjAiXSxbNywwLCIwIl0sWzMsMSwiMCJdLFs3LDEsIjAiXSxbMywyLCIwIl0sWzcsMiwiMCJdLFsyLDAsIlxcRXh0XjEoXFxQaGkoUyksXFxQaGkoWCkpIl0sWzIsMSwiXFxFeHReMShXLFxcUGhpKFgpKSJdLFsyLDIsIlxcRXh0XjEoVyxcXFBoaShYKS9WKSJdLFswLDAsIlxcZXRhIl0sWzAsMiwiXFxiYXJ7XFxldGF9Il0sWzEsMCwiXFxpbiJdLFsxLDIsIlxcaW4iXSxbMSwyXSxbMSwwLCIiLDIseyJsZXZlbCI6Miwic3R5bGUiOnsiaGVhZCI6eyJuYW1lIjoibm9uZSJ9fX1dLFszLDRdLFszLDVdLFs2LDcsIiIsMCx7ImxldmVsIjoyLCJzdHlsZSI6eyJoZWFkIjp7Im5hbWUiOiJub25lIn19fV0sWzYsOF0sWzksMF0sWzAsNF0sWzQsOCwiXFxwaSJdLFs4LDEwXSxbMTEsMV0sWzEsM10sWzMsNl0sWzYsMTJdLFsxMywyXSxbMiw1LCJcXGlvdGEiXSxbNSw3LCJcXHBpJyJdLFs3LDE0XSxbMTUsMTZdLFsxNiwxN10sWzE4LDE5LCIiLDAseyJzdHlsZSI6eyJ0YWlsIjp7Im5hbWUiOiJtYXBzIHRvIn19fV0sWzcsNSwiXFx0aGV0YSIsMix7ImN1cnZlIjotMywiY29sb3VyIjpbMzU4LDEwMCw2MF19LFszNTgsMTAwLDYwLDFdXV0=
\[\begin{tikzcd}
	\eta &[-30pt] \in & [-30pt]{\Ext^1(\Phi(S),\Phi(X))} & 0 & {\Phi(X)} & {\Phi(Y)} & {\Phi(S)} & 0 \\
	&& {\Ext^1(W,\Phi(X))} & 0 & {\Phi(X)} & {\pi^{-1}(W)} & W & 0 \\
	{\bar{\eta}} & \in & {\Ext^1(W,\Phi(X)/V)} & 0 & {\Phi(X)/V} & {\pi^{-1}(W)/V} & W & 0
	\arrow[from=2-5, to=3-5]
	\arrow[Rightarrow, no head, from=2-5, to=1-5]
	\arrow[from=2-6, to=1-6]
	\arrow[from=2-6, to=3-6]
	\arrow[Rightarrow, no head, from=2-7, to=3-7]
	\arrow[from=2-7, to=1-7]
	\arrow[from=1-4, to=1-5]
	\arrow[from=1-5, to=1-6]
	\arrow["\Phi(\pi)", from=1-6, to=1-7]
	\arrow[from=1-7, to=1-8]
	\arrow[from=2-4, to=2-5]
	\arrow[from=2-5, to=2-6]
	\arrow[from=2-6, to=2-7]
	\arrow[from=2-7, to=2-8]
	\arrow[from=3-4, to=3-5]
	\arrow["\iota", from=3-5, to=3-6]
	\arrow["{\pi'}", from=3-6, to=3-7]
	\arrow[from=3-7, to=3-8]
	\arrow[from=1-3, to=2-3]
	\arrow[from=2-3, to=3-3]
	\arrow[maps to, from=1-1, to=3-1]
	\arrow["\theta"', color={rgb,255:red,255;green,51;blue,58}, curve={height=-18pt}, from=3-7, to=3-6]
\end{tikzcd}\]
When $(V,W) \in \Img \Psi$, $\bar{\eta}$ is split, and each split morphism $\theta$ gives us an element in $\Psi^{-1}((V,W))$. If we fix one split morphism $\theta_0$, then the other split morphisms are all of the form $\theta_0 + \textcolor{black}{\iota \circ} f$ where $f \in \Hom_{R}(W,\Phi(X)/V)$(and this form is unique). So
\belowdisplayskip=-12pt
$$\Psi^{-1}((V,W)) \cong \{ \theta: \text{ split morphism} \} \cong \Hom_{R}(W,\Phi(X)/V).$$
\end{proof}
\begin{remark}\label{rem:bundleprop}
Any point $(V,W) \in \Img \Psi_{\dimvec{f},\dimvec{g}}$ can be also viewed as a morphism
$$f: \Spec K \longrightarrow \Img \Psi_{\dimvec{f},\dimvec{g}}  \subseteq \Grr_{\dimvec{f}}(\Phi(X)) \shorttimes \Grr_{\dimvec{g}}(\Phi(S)) $$
where Grassmannian are viewed as moduli spaces over $K$. Essentially by replacing $\Spec K$ by any locally closed reduced subscheme $\Spec A$ of $\Img \Psi_{\dimvec{f},\dimvec{g}}$ in Lemma \ref{lem:torsor}, we can run the machinery of algebraic geometry, and mimic the proof of \cite[Theorem 24]{irelli2019cell} to show that $\Psi_{\dimvec{f},\dimvec{g}}$ is a Zariski-locally trivial affine bundle over $\Img \Psi_{\dimvec{f},\dimvec{g}}$ when $\eta$ generates $\Ext^1(S,X)$. Roughly, there are 4 steps:
\begin{enumerate}[1.]
\item Realise Grassmannians as representable functors, and replace $K$-modules by $A$-modules;
\item Verify that $\Psi_{\dimvec{f},\dimvec{g}}^{-1} (\Spec A)$ is a $\Hom_A(\mathcal{W},\Phi(X)_A/\mathcal{V})$-torsor, where $$(\mathcal{V},\mathcal{W}) \in \Grr_{\dimvec{f}}(\Phi(X))(A) \shorttimes \Grr_{\dimvec{g}}(\Phi(S))(A)$$ corresponds to the immersion $\Spec A \hookrightarrow \Img \Psi_{\dimvec{f},\dimvec{g}}\,$;
\item Verify that $\Hom_A(\mathcal{W},\Phi(X)_A/\mathcal{V})$ is a vector bundle over $\Spec A$ of constant dimension $\left< \dimvec{f},\dimv \Phi(X)-\dimvec{g}\right>_R\,$;
\item Find a section of $\Psi_{\dimvec{f},\dimvec{g}}^{-1} (\Spec A) \longrightarrow \Spec A$, which is essentially the splitting $\theta$ in \cite[Lemma 22]{irelli2019cell}.
\end{enumerate}

\end{remark}
\begin{proof}[{Proof of Theorem \ref{thm:main1} and \ref{thm:main2}}]%\belowdisplayskip=-12pt
We have already computed $\Img \Psi$ in Corollary \ref{cor:img1} and \ref{cor:img2}. In both cases $\eta$ generates $\Ext^1(S,X)$, so by Corollary \ref{cor:0or1} we get
\begin{equation*}
\begin{aligned}
(V,W) \in \Img \Psi_{\dimvec{f},\dimvec{g}} &\Longleftrightarrow [W,\Phi(X)/V]^1=0\\
& \Longrightarrow [W,\Phi(X)/V]=\left< W,\Phi(X)/V\right>_R=\left< \dimvec{f},\dimv \Phi(X)-\dimvec{g}\right>_R.
\end{aligned}
\end{equation*}
From Remark \ref{rem:bundleprop}, $\Psi_{\dimvec{f},\dimvec{g}}$ is a Zariski-locally trivial affine bundle.
\end{proof}
%%%%%%%%%%%%%%%%%%%%%%%%%%%%%%%%%%%%%%%%%%%%%%%%%%%%%%%%%%%%%%%%%%%%%%%%%%

\section{Application: Dynkin Case}\label{sec:Dynkin}

In this section and the next, the proof of Theorem 4.1 is the main subject, with $Q$ representing a Dynkin quiver throughout.

\begin{theorem}\label{thm:Dynkincase}
For any Dynkin quiver $Q$ and any representation $M \in \rep(Q)$, the (strict) partial flag variety $\Flag{}(M)\cong\Grr(\Phi(M))$ has an affine paving.
\end{theorem}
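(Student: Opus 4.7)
The plan is to argue by induction on $\dim M = \sum_{i \in v(Q)} \dim_K M_i$, with the trivial case $M=0$ serving as base. For the inductive step, I would try to find a short exact sequence
\[
\eta\colon 0 \longrightarrow X \longrightarrow M \longrightarrow S \longrightarrow 0
\]
in $\rep(Q)$ with $[S,X]^1 \leqslant 1$, and (in the non-split case) with $\eta$ generating $\Ext^1_{KQ}(S,X)$. Since $X$ and $S$ then have strictly smaller dimension than $M$ (as long as neither is zero), by induction $\Flag{}(X) \cong \Gr(\Phi(X))$ and $\Flag{}(S) \cong \Gr(\Phi(S))$ admit affine pavings, and so do their dimension-vector strata, as well as the closed subvarieties $\Gr_{\dimvec{f}}(\Phi(X_S))$ and $\Gr_{\dimvec{g}-\dimv\Phi(S^X)}(\Phi(S/S^X))$ appearing in Theorem \ref{thm:main2}.

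Given such an $\eta$, the paving of $\Gr(\Phi(M))$ is assembled in two stages. First, for each dimension vector $\dimvec{h}$, Remark \ref{rem:topo} provides a filtration of $\Gr_{\dimvec{h}}(\Phi(M))$ whose successive strata are the pieces $\Gr(\Phi(M))_{\dimvec{f},\dimvec{g}}$. Second, by Theorem \ref{thm:main1} or Theorem \ref{thm:main2}, each $\Psi_{\dimvec{f},\dimvec{g}}$ is a Zariski-locally trivial affine bundle over $\Img \Psi_{\dimvec{f},\dimvec{g}}$, which is either all of $\Gr_{\dimvec{f}}(\Phi(X)) \times \Gr_{\dimvec{g}}(\Phi(S))$ or its complement against the smaller product $\Gr_{\dimvec{f}}(\Phi(X_S)) \times \Gr_{\dimvec{g}-\dimv\Phi(S^X)}(\Phi(S/S^X))$. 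Since affine bundles preserve affine pavings, it suffices to paving the image; in the split case this is immediate, while in the non-split case one needs the affine pavings of the two base factors to restrict compatibly to the excised closed subvariety so that the complement is itself a union of cells.

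The main obstacle is constructing $\eta$ so that the resulting pavings match up. Using the Auslander--Reiten quiver of $Q$ --- finite and explicit in Dynkin type --- one typically picks $S$ as a carefully chosen indecomposable summand of $M$ (or, when $M$ is indecomposable, as a carefully chosen subquotient), so that $[S,X]^1 \leqslant 1$ and the closed subvariety of Theorem \ref{thm:main2}, when present, is compatible with the inductive pavings. For types $A$ and $D$ this reduction is essentially uniform. In type $E$ the AR-combinatorics are more intricate and a handful of explicit configurations --- roughly eight critical cases --- escape the generic reduction; these I would handle by direct computation in the remainder of Section \ref{sec:Dynkin} and in Appendix \ref{appendix:proofcomplement}, using the finer AR-theoretic tools recalled in Appendix \ref{appendix:arth}.
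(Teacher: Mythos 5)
Your proposal is correct and follows essentially the same route as the paper: split sequences with $[M_2,M_1]^1=0$ (from the directedness of the Dynkin AR quiver) plus Theorem \ref{thm:main1} for decomposable $M$, AR-theoretically chosen subrepresentations (minimal sectional monos) plus Theorem \ref{thm:main2} for indecomposables, and explicit analysis of the exceptional type-$E$ configurations in the appendix. The only detail worth flagging is that the paper's base case for indecomposables is not $M=0$ but the representations with $\ord(M)\leqslant 2$, handled directly as products of $\mathbb{P}^1$'s (Lemma \ref{lem:smallvecdim}), since for those no suitable minimal sectional mono need exist.
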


Before discussing the proof of the affine paving property, we introduce some numerical concepts, which can be seen as a measure of the ``complexity" of the representation.

\captionsetup[subfigure]{labelformat=brace}
\begin{center}
\begin{figure}[ht]
\subcaptionbox[$E_6$]{$E_6$}[50mm][c]
{
\centering
\vspace{0mm}
\includegraphics[scale=0.8]{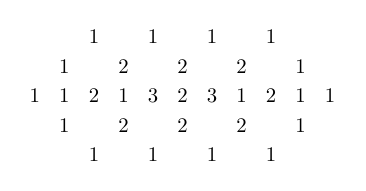}  
\vspace{1mm}
}~
\subcaptionbox[$E_7$]{$E_7$}[80mm][c]
{
\centering
\vspace{0mm}
\includegraphics[scale=0.8]{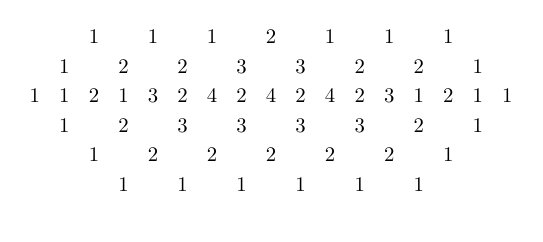}  
}
\subcaptionbox[$E_8$]{$E_8$}[130mm][c]
{
\centering
\vspace{0mm}
\includegraphics[scale=0.8]{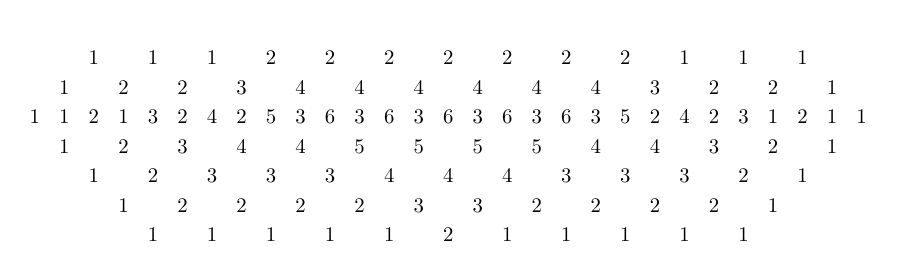}  
}
\caption[The starting functions $s_{P(e)}$]{The starting functions $s_{P(e)}$.}
\label{fig:startingfunction}
\end{figure}
\end{center}

For an \textbf{indecomposable} quiver representation $M \in \rep(Q)$, we can define the starting functions for each vertex $i \in v(Q)$:
$$s_{P(i)}(M):= \dim_K \Hom(P_i,M) = \dim_K M_i.$$
The order of $M$ is defined as the maximum:
$$\ord(M):= \max_{i \in v(Q)} s_{P(i)}(M).$$
When the quiver $Q$ is of type $E$, let $e \in v(Q)$ denote the branch vertex, then $\ord(M)=\orde(M)$ unless $\orde(M)=0$. The starting functions are detailed in \cite{bongartz1984critical}. For reference, Figure \ref{fig:startingfunction} presents a subset pertinent to this work.

The next lemma shows the affine paving property for representations of small order.

\begin{lemma}[{Follows \cite[Lemma 2.23]{maksimau2019flag}}]\label{lem:smallvecdim}
 For an indecomposable representation $M \in \rep(Q)$ with $\ord(M) \leqslant 2$, the variety $\Grr_{\dimvec{f}}(\Phi(M))$ is either empty or a direct product of some copies of $\mathbb{P}^1$. Especially, the partial flag variety $ \Grr_{\dimvec{f}}(\Phi(M))$ has an affine paving.
\end{lemma}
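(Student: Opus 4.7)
The plan is to translate the problem, via Proposition \ref{prop:flag=gr}, into the analysis of the partial flag variety of $M$ itself, i.e., the space of flags $F_\bullet = (F_1 \subseteq \cdots \subseteq F_d)$ of subrepresentations of $M$ with prescribed dimension vector $\dimvec{f}$, and then to examine each flag componentwise at each vertex $i \in v(Q)$, using the arrow constraints to couple the local choices along the underlying tree.

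If $\ord(M) = 1$, the representation $M$ is thin (every $M_i$ has dimension $0$ or $1$), so any subrepresentation is determined by its support, which must be closed under the action of the arrows of $Q$ restricted to $\supp(M)$. Hence for each dimension vector there is at most one subrepresentation, and therefore at most one flag, so $\Grr_{\dimvec{f}}(\Phi(M))$ is either empty or a single point, i.e., a product of zero copies of $\mathbb{P}^1$.

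For $\ord(M) = 2$, the key local observation is that a nested flag of subspaces $0 \subseteq V_1 \subseteq \cdots \subseteq V_d \subseteq \mathbb{C}^2$ with fixed dimension sequence is either rigid (no $V_r$ has dimension exactly $1$) or contributes a single $\mathbb{P}^1$-parameter, namely the common $1$-dimensional subspace appearing wherever the flag has dimension $1$. Thus a flag in $M$ is determined by such local data at every vertex, subject to the arrow constraints $M_a(F_r^{(i)}) \subseteq F_r^{(j)}$ along each arrow $a:i\to j$ of $Q$. Since $M$ is indecomposable of order at most $2$ on a tree quiver, the linear maps $M_a$ are highly restricted; one can pick a root of the underlying tree and propagate the constraints outward, so that each vertex either (i) introduces a new free $\mathbb{P}^1$-parameter, (ii) has its $1$-dimensional subspace forced by an already-determined one via an injective or isomorphism map, or (iii) yields incompatible constraints and hence falls outside the stratum. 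In each non-empty case the result is a product of copies of $\mathbb{P}^1$.

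The main obstacle is ruling out a troublesome configuration in which two $2$-dimensional vertices are linked by a rank-$1$ map along an arrow, which a priori would cut out a variety of the form ``two copies of $\mathbb{P}^1$ meeting at a point'' rather than a product. Such a configuration does not occur here: indecomposability of $M$, together with $\ord(M) \leq 2$ and the tree hypothesis, forces arrow maps between $2$-dimensional vertices to be isomorphisms (and maps incident to a $1$-dimensional vertex to be injective or surjective onto a prescribed line). This reduction is carried out by Maksimau in \cite[Lemma 2.22]{maksimau2019flag} for types $A$ and $D$, and the same case analysis extends to all tree quivers because $\ord(M) \leq 2$ severely restricts which indecomposable dimension vectors can occur.
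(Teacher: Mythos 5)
Your argument follows the same route as the paper: reduce to a product of $\mathbb{P}^1$'s (one for each relevant $2$-dimensional vertex) and observe that indecomposability together with the tree hypothesis forces arrow maps between two $2$-dimensional vertices to be isomorphisms, while arrows adjacent to a $1$-dimensional vertex are injective or surjective, so the non-vertical arrows in the extended quiver only serve to identify factors rather than cut out a more complicated subvariety. The paper's proof asserts the same three-case analysis ($K \hookrightarrow K^2$, $K^2 \twoheadrightarrow K$, $K^2 \xrightarrow{\ \sim\ } K^2$) that you carry out, so the two proofs are essentially identical in content; the only difference is that you defer the isomorphism claim to Maksimau's Lemma 2.22 whereas the paper states it directly, but neither gives more detail than the other.
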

\begin{proof}\belowdisplayskip=-12pt
	For every $i \in v(Q)$, $\dim_K M_i\leqslant 2$. Since $Q$ is a tree and $M$ is indecomposable, for every $b\in a(Q)$ satisfying $\dim_K M_{s(b)}= \dim_K M_{t(b)}=2$, the map $M_{s(b)} \longrightarrow M_{t(b)}$ is an isomorphism. Therefore, when $\Grr_{\dimvec{f}}(\Phi(M)) \neq \varnothing$,\footnote{This condition imposes very strong restrictions on $\dimvec{f}$.} we get the natural embedding
	$$\Grr_{\dimvec{f}}(\Phi(M)) \longrightarrow \prod_{\substack{i \in v(Q) \;s.t.\\ \dim_K M_i=2 \\ \dimvec{f}_{(i,r)}=1 \text{ for some }r}} \mathbb{P}^1,$$\\[0.3cm]
	and the information of non-vertical arrows in the extended quiver (see Example \ref{eg:ext_quiver}) just reduce the number of $\mathbb{P}^1$. Precisely, one needs to carefully discuss three cases of $M_i \longrightarrow M_j$:
	% https://q.uiver.app/?q=WzAsNyxbMCwwLCJLIl0sWzMsMCwiSyJdLFsxLDAsIkteMiJdLFsyLDAsIkteMiJdLFs1LDAsIkteMiJdLFs2LDAsIkteMi4iXSxbNCwwLCJcXHRleHR7YW5kfSJdLFs0LDUsIlxcY29uZyJdLFszLDEsIiIsMCx7InN0eWxlIjp7ImhlYWQiOnsibmFtZSI6ImVwaSJ9fX1dLFswLDIsIiIsMCx7InN0eWxlIjp7InRhaWwiOnsibmFtZSI6Imhvb2siLCJzaWRlIjoidG9wIn19fV1d
	\[\begin{tikzcd}
		K & {K^2} & {K^2} & K &[-0.5cm] {\text{and}} &[-0.5cm] {K^2} & {K^2.}
		\arrow["\cong", from=1-6, to=1-7]
		\arrow[two heads, from=1-3, to=1-4]
		\arrow[hook, from=1-1, to=1-2]
	\end{tikzcd}\]
\end{proof}

\begin{proof}[{Proof of Theorem \ref{thm:Dynkincase}, assuming Theorem \ref{thm:bigorder}}]
First of all, for any indecomposable representation $M \in \rep(Q)$ we obtain an affine paving. This follows from Theorem \ref{thm:bigorder} when $\ord(M)>2$, and follows from Lemma \ref{lem:smallvecdim} when $\ord(M)\leqslant 2$.

The general case follows by induction on the dimension vector. The indecomposable representations $\{N_i\}_{i \in Q_0}$ of quiver $Q$ can be ordered such that $[N_i,N_j]=0$ for all $i>j$. Therefore, every non-indecomposable representation $M$ can be decomposed as the direct sum of two nonzero representations $M_1, M_2$ satisfying $[M_2,M_1]^1=0$.
By applying Theorem \ref{thm:main1} to the short exact sequence 
$$0 \longrightarrow M_1 \longrightarrow M \longrightarrow M_2 \longrightarrow 0,$$
we get an affine paving from the affine pavings of $M_1$ and $M_2$, see Remark \ref{rem:topo}.
\end{proof}
\begin{remark}\label{rem:smallorder}
By the same technique one can show that, for Dynkin quiver $Q$ and any representation $M$ with $\displaystyle\max_{i \in v(Q)} \dim_K M_i \leqslant 2$, the variety $\Grr_{\dimvec{f}}(\Phi(M))$ has an affine paving. This result does not depend on Theorem \ref{thm:bigorder}.
\end{remark}

\section{Affine paving for big order representations}\label{sec:proofcomplement}

This section aims to establish the following theorem:

\begin{theorem}\label{thm:bigorder}
Suppose $Q$ is of Dynkin type. For any indecomposable representation $M \in \rep(Q)$ with $\ord(M)>2$, the (strict) partial flag variety $\Grr(\Phi(M))$ has an affine paving.
\end{theorem}

When the quiver $Q$ is of type $A$ or $D$, Theorem \ref{thm:bigorder} is trivially true since no indecomposable representation can have order bigger than two. So we only concentrate on type $E$.

The idea of the proof is as follows. For any indecomposable representation $Y$ with $\ord(Y)>2$, we put $Y$ into a short exact sequence 
$$\eta:0\longrightarrow X \longrightarrow Y \longrightarrow S \longrightarrow 0$$
fulfilling the assumptions of Theorem \ref{thm:main2}, and then $\Gr(\Phi(Y))$ has an affine paving if $\Img \Psi$ has. If additionally the map $X \hookrightarrow Y$ is a minimal sectional mono, then $\Img \Psi_{\dimvec{f},\dimvec{g}}$ can be written as the product space, which makes $\Img \Psi$ easier to understand, see Figure \ref{figprod}.

\begin{figure}[ht]
    \includegraphics[scale=0.7]{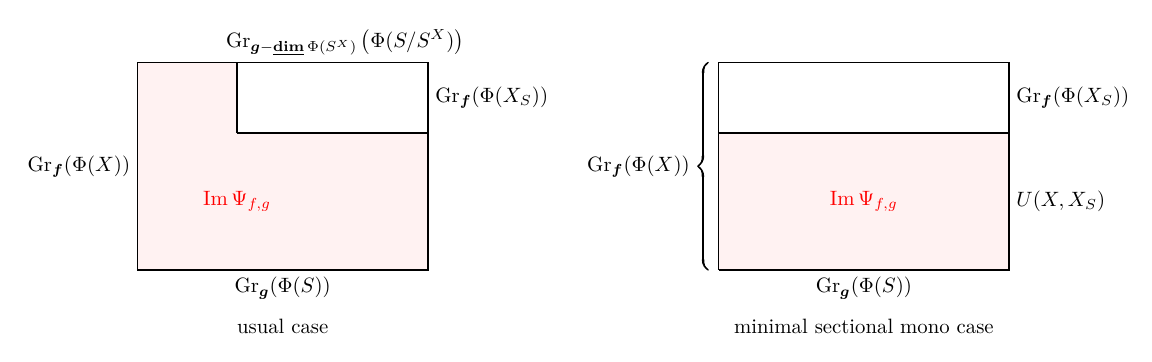}
    \caption{Image in the product space.}
    \label{figprod}
\end{figure}

\begin{defn}[Sectional morphism]\label{def:sec_morphism}
Let $Q$ be a quiver of Dynkin type, and $M,N \in \rep(Q)$ be two indecomposable representations of $Q$. A morphism $f \in \Hom_{KQ}(M,N)$ is called sectional if $f$ can be written as the composition
$$f: M=X_0 \stackrel{f_1}{\longrightarrow} X_1 \stackrel{f_2}{\longrightarrow} \cdots \stackrel{f_{t-1}}{\longrightarrow} X_{t-1} \stackrel{f_t}{\longrightarrow} X_t=N$$
where $f_i \in \Hom_{KQ}(X_{i-1},X_i)$ are irreducible morphisms between indecomposable representations, and $\tau X_{i+2} \ncong X_i$ for any suitable $i$.

A sectional morphism $f \in \Hom_{KQ}(M,N)$ is called as a sectional mono if $f$ is injective; a sectional mono is called minimal if $f_t \circ \cdots \circ f_{i+1}:X_i \longrightarrow N$ are surjective for any $i \in \{1,2,\ldots,t \}$.
\end{defn}

\begin{lemma}[Happel--Ringel]\label{lem:Happel--Ringel}
Let $M$ and $N$ be two indecomposable $Q$-representations. Any sectional morphism $f \in \Hom_{KQ}(M,N)$ is either surjective or injective.
\end{lemma}
\begin{proof}
When $Q$ is a quiver without oriented cycles, then $[N,M]^1 \leqslant [M,\tau N]=0$, thus by \cite[Lemma 7]{irelli2019cell} we get the result; when $Q$ is of type $\tilde{A}$, the result comes from \cite[Lemma 51]{irelli2019cell}.
\end{proof}

The next two lemmas tell us the existence of the desired short exact sequence.
\begin{lemma}\label{lem:msm&order}
	For every indecomposable representation $Y$ of type $E$ with $\ord(Y)>2$, there is a minimal sectional mono $f:X \longrightarrow Y$ such that $\dimv(X)<\dimv(Y)$.
\end{lemma}
\begin{proof}
	Suppose that $Y$ is an indecomposable representation of type $E$ such that $\ord(Y)>2$, then $s_{P(e)}(Y)=\ord(Y)>2$. By direct inspection on every table on Figure \ref{fig:startingfunction}, one sees that there is a sectional
	path (which can hence be chosen to be minimal) ending
	in $[Y]$ and starting at some $[M]$ with $s_{P(e)}(M)<s_{P(e)}(Y)$, so $\dimv(M)<\dimv(Y)$. Write this sectional path as $M=X_0 \rightarrow \cdots \rightarrow X_t=Y$, and take the maximal $i$ such that $\dimv(X_i)<\dimv(Y)$. Take $X=X_i$.
	By Lemma \ref{lem:Happel--Ringel}, the sectional mono $f:X \longrightarrow Y$ is injective. It follows that there exists a minimal sectional mono from $X$ to $Y$. 
\end{proof}

\begin{center}
\begin{figure}[ht]
\subcaptionbox[$E_6$]{$E_6$}[50mm][c]
{
\centering
\vspace{0mm}
\includegraphics[scale=0.8]{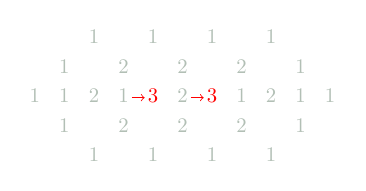}  
\vspace{1mm}
}~
\subcaptionbox[$E_7$]{$E_7$}[80mm][c]
{
\centering
\vspace{0mm}
\includegraphics[scale=0.8]{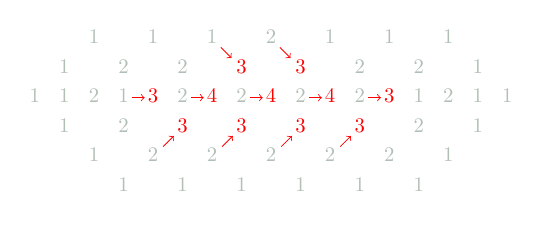}  
}
\subcaptionbox[$E_8$ usual cases]{$E_8$ usual cases}[130mm][c]
{
\centering
\vspace{0mm}
\includegraphics[scale=0.8]{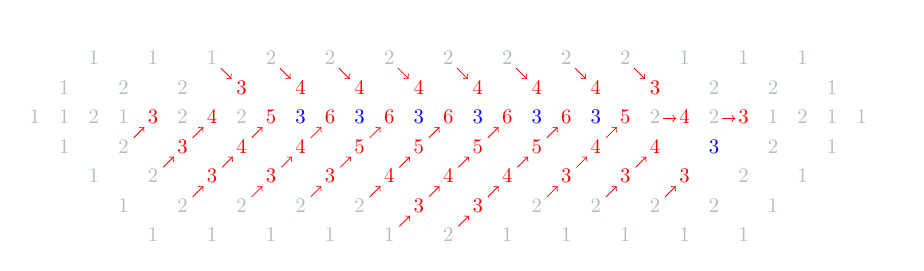}  
}
\subcaptionbox[$E_8$ exception cases]{$E_8$ exception cases\label{subfig:exception}}[130mm][c]
{
\centering
\vspace{0mm}
\includegraphics[scale=0.8]{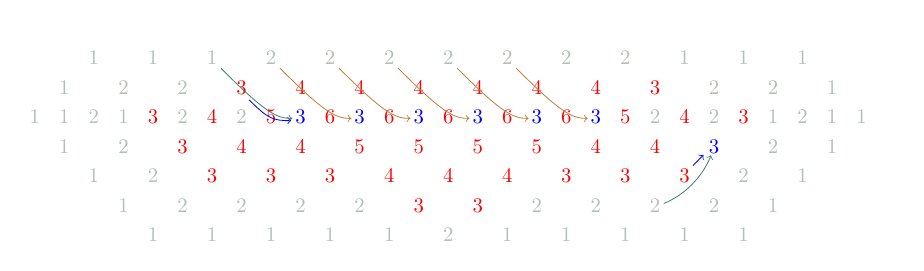}  
}
			\caption{Minimal sectional monos.}
			\label{fig:minisecmono}
\end{figure}
\end{center}

\begin{remark}
	The chosen minimal sectional monos are displayed in Figure \ref{fig:minisecmono}, where the arrows start at $[X]$ and end at $[Y]$. From
	those tables it is clear that in type $E_6$ the only indecomposable
	representations with order strictly bigger than two are
	those lying in the $\tau^-$-orbit of $P(e)$.
	In particular, they are the middle term of an almost split
	sequence, and hence in type $E_6$ Theorem \ref{thm:Dynkincase} is clear. Additionally, the tables demonstrate that the condition $\ord(Y) > 2$ in the lemma is essential and cannot be omitted.
\end{remark}
\begin{lemma}[{Adapted from \cite[Lemma 56 \& Lemma 8]{irelli2019cell}}]\label{lem:value}
Let $\iota: X \hookrightarrow Y$ be a minimal sectional mono, and $S:=Y/X$ be the quotient. Then $S$ is indecomposable and rigid, the short exact sequence 
$$\eta: 0 \longrightarrow X \longrightarrow Y \longrightarrow S \longrightarrow 0$$
generates $\Ext^1(S,X)$, and $S^X=S$. Moreover, one gets
\begin{equation*}
\left\{
\begin{aligned}
  \;& [X,X]=[Y,Y]=[S,S]=1 \\ 
  \;& [X,Y]=[Y,S]=[S,X]^1=1 \\ 
  \;& [X,S]=[Y,X]=[S,Y]=[S,X]=0 \\ 
  \;& [X,X]^1=[Y,Y]^1=[S,S]^1=0 \\ 
  \;& [X,Y]^1=[Y,S]^1=[X,S]^1=[Y,X]^1=[S,Y]^1=0 \\   
\end{aligned}
\right.
\end{equation*}
\end{lemma}
\begin{proof}
Since every indecomposable representation of Dynkin quiver is a brick and rigid, we get $[X,X]=[Y,Y]=1$ and $[X,X]^1=[Y,Y]^1=0$. Since $Q$ is of type $E$, by the definition of minimal sectional mono, we get $[Y,X]^1=0$. Then by Unger’s lemma \cite{irelli2019cell} one
gets that $S$ is indecomposable and $[S,X]^1 = 1$. In particular
$\eta$ is generating. Since $\iota$ is minimal, the pullback $j^*(\eta)$ by any
(proper) monomorphism $j:N \hookrightarrow S$ splits; it follows that
$S^X = S$. By applying the functors $[X,-],[Y,-],[-,X],[-,Y],[-,S]$ to the short exact sequence $\eta$ we get all the dimensions. 
\end{proof}

\begin{lemma}[{Follows \cite[Lemma 36]{irelli2019cell}, with the same proof}]\label{lem:descriptionofX_S}
	The notation $X$, $Y$ and $S$ remains as in the previous lemma. Let $X_1$ be the indecomposable module in the sectional path $X=X_0 \rightarrow \cdots \rightarrow X_t=Y$, and let $E \longrightarrow X$ be the minimal right almost split morphism ending in $X$. We have the decomposition $E=E' \oplus \tau X_1$ for some $Q$-representation $E'$. When $Y$ is not projective, $X_S$ is isomorphic to $\ker (E \longrightarrow \tau Y) \cong E' \oplus \ker (\tau X_1 \longrightarrow \tau Y)$; when $Y$ is projective, $X_S \cong E$.
\end{lemma}

\begin{proof}
Let $f: X \longrightarrow \tau S$ be a non-zero morphism, then by Lemma \ref{lemma:second_description_XS}, $X_S=\ker(f)$. 

When $X$ is not projective, we get a commutative diagram with exact rows:
% https://q.uiver.app/#q=WzAsMTEsWzIsMCwiXFx0YXUgWCJdLFszLDAsIkUiXSxbNCwwLCJYIl0sWzIsMSwiXFx0YXUgWCJdLFszLDEsIlxcdGF1IFkiXSxbNCwxLCJcXHRhdSBTIl0sWzEsMCwiMCJdLFsxLDEsIjAiXSxbNSwwLCIwIl0sWzUsMSwiMCJdLFswLDAsIlxcZXRhX1g6Il0sWzYsMF0sWzAsMV0sWzEsMl0sWzIsOF0sWzcsM10sWzMsNF0sWzQsNV0sWzUsOV0sWzEsNF0sWzIsNSwiZiJdLFswLDMsIiIsMSx7ImxldmVsIjoyLCJzdHlsZSI6eyJoZWFkIjp7Im5hbWUiOiJub25lIn19fV1d
\[\begin{tikzcd}
	{\eta_X:} & 0 & {\tau X} & E & X & 0 \\
	& 0 & {\tau X} & {\tau Y} & {\tau S} & 0
	\arrow[from=1-2, to=1-3]
	\arrow[from=1-3, to=1-4]
	\arrow[Rightarrow, no head, from=1-3, to=2-3]
	\arrow[from=1-4, to=1-5]
	\arrow[from=1-4, to=2-4]
	\arrow[from=1-5, to=1-6]
	\arrow["f", from=1-5, to=2-5]
	\arrow[from=2-2, to=2-3]
	\arrow[from=2-3, to=2-4]
	\arrow[from=2-4, to=2-5]
	\arrow[from=2-5, to=2-6]
\end{tikzcd}\]
where the commutativity comes from the fact that $[E,\tau S]=1$ (by applying $[-,\tau S]$ to $\eta_X$). By the snake lemma, one gets 
\begin{equation}\label{eq:X_S}
X_S = \ker(f) \;\cong\; \ker (E \longrightarrow \tau Y) \;\cong\; E' \oplus \ker (\tau X_1 \longrightarrow \tau Y). \tag{1}
\end{equation}

When $X=P(k)$ is projective while $Y$ is not projective, we get another commutative diagram with exact rows
% https://q.uiver.app/#q=WzAsMTEsWzIsMCwiRSJdLFszLDAsIlg9UChrKSJdLFs0LDAsIlMoaykiXSxbMiwxLCJcXHRhdSBZIl0sWzMsMSwiXFx0YXUgUyJdLFs0LDEsIkkoaykiXSxbMSwwLCIwIl0sWzEsMSwiMCJdLFs1LDAsIjAiXSxbNSwxLCIwIl0sWzAsMCwiXFxwaGFudG9te1xcZXRhX1g6fSJdLFs2LDBdLFswLDFdLFsxLDJdLFsyLDhdLFs3LDNdLFszLDRdLFs0LDVdLFs1LDldLFsxLDQsImYiXSxbMiw1LCIiLDAseyJzdHlsZSI6eyJ0YWlsIjp7Im5hbWUiOiJob29rIiwic2lkZSI6ImJvdHRvbSJ9fX1dLFswLDNdXQ==
\[\begin{tikzcd}
	{\phantom{\eta_X:}} & 0 & E & {X=P(k)} & {S(k)} & 0 \\
	& 0 & {\tau Y} & {\tau S} & {I(k)} & 0
	\arrow[from=1-2, to=1-3]
	\arrow[from=1-3, to=1-4]
	\arrow[from=1-3, to=2-3]
	\arrow[from=1-4, to=1-5]
	\arrow["f", from=1-4, to=2-4]
	\arrow[from=1-5, to=1-6]
	\arrow[hook', from=1-5, to=2-5]
	\arrow[from=2-2, to=2-3]
	\arrow[from=2-3, to=2-4]
	\arrow[from=2-4, to=2-5]
	\arrow[from=2-5, to=2-6]
\end{tikzcd}\]
one still gets \eqref{eq:X_S} through the snake lemma.

When $X=P(k)$ is projective and $Y=P(j)$ is projective, it is clear that $X \longrightarrow Y$ is irreducible, and one can check by hand that $[S,X/E]^1 = [S, S(k)]^1=1$, so $X_S \cong E$.

\end{proof}

\begin{corollary}
	When $\iota: X \longrightarrow Y$ is irreducible monomorphism, the representation $X_S$ is either $0$ or an indecomposable representation with property that $X_S \longrightarrow X$ is also an irreducible monomorphism.
\end{corollary}
\begin{proof}
If $\iota$ is an irreducible monomorphism, it follows that $X_1 = Y$, and $X$ does not belong to the $\tau^-$-orbit of $P(e)$. It follows that the representation $E$ has at most two indecomposable components. By Lemma \ref{lem:descriptionofX_S}, we obtain $X_S \cong E'$: 
\begin{itemize}
\item If $Y$ is not projective, then $\tau X_1 = \tau Y$, so $X_S \cong E' \oplus \ker(\tau X_1 \to \tau Y) \cong E'$;
\item If $Y$ is projective, then $\tau X_1 = 0$, so $X_S \cong E \cong E'$.
\end{itemize}
In both cases, $E' \subset E$ is either $0$ or indecomposable. Since $E \to X$ is the minimal right almost split morphism, $E' \to X$ is an irreducible monomorphism.
\end{proof}

For convenience, we simplify the notations: write $\Gr_{\dimvec{f}}(\Phi(M))$ as $\Gr(M)$, $\Gr_{\dimvec{f}}(\Phi(M)) \setminus \Gr_{\dimvec{f}}(\Phi(N))$ as $U(M,N)$, where we omit subscripts which indicate the dimension vectors.

\begin{lemma}[{Follows \cite[Theorem 59]{irelli2019cell}}]\label{lem:worst_case}
Let $f:X \hookrightarrow Y$ be a minimal sectional mono and $S:=Y/X$ be the quotient. When $X_S=F \oplus T$ with $F$ and $T$ nonzero indecomposable, $F\hookrightarrow X$ irreducible and $T \hookrightarrow X/F$ sectional mono, we have
$$U(X,X_S)\longrightarrow \Gr(F) \shorttimes U(X/F,T) \oder U(F,F_{X/F})$$
as a (Zariski) locally-trivial affine bundle.
\end{lemma}

%https://tex.stackexchange.com/questions/66152/pushing-qed-to-the-right-within-a-displayed-formula
\begin{proof}
We have two short exact sequences satisfying the conditions in \ref{thm:main2}:
\begin{center}
% https://tikzcd.yichuanshen.de/#N4Igdg9gJgpgziAXAbVABwnAlgFyxMJZARgBoAGAXVJADcBDAGwFcYkRyQBfU9TXfIRQAmCtTpNW7AGLdeIDNjwEiAZjE0GLNohAANOXyWCiAFg0Tt7PQHpZPIwJUoArBa1TdnBwv7KhJKTE4h46HIa+xs7IbsGakmHe8opOAaJxlp4gACoRKf5qQSEJ1nZ5fiYo5hmhpXoA+gDK5VEB5O4lugA6XTA49Lo++ZXI7TWdID0AHliD4jBQAObwRKAAZgBOEAC2SG4gOBBIAOw+mzsnNIdIABxnW7uIN1dHiACc8VbdXWhYAOQRc6PD4HV4ANnuF0Q6lBSFMkMeolh0M+WR6v0BDyQZGRwgRSHayOIXEoXCAA
\begin{tikzcd}[row sep=0mm]
\eta: & 0 \arrow[r] & F \arrow[r] & X \arrow[r, "\pi"]    & X/F \arrow[r]   & 0 \\
\xi:  & 0 \arrow[r] & T \arrow[r] & X/F \arrow[r, "\pi'"] & X/X_S \arrow[r] & 0.
\end{tikzcd}
\end{center}
Let $N \in \Gr(X)$ be a subrepresentation, it is obvious that $N \in \Gr(X_S) \iff \pi'\circ \pi(N)=0$, so 
  \begin{equation*}
  \begin{aligned}
  N\in U(X,X_S) & \iff \pi'\circ \pi(N) \neq 0\\
  & \iff \pi(N) \notin \Gr(T)\\
  & \iff \pi(N) \in U(X/F,T)\\
  & \iff \Psi_{\eta}(N) \in \Gr(F) \times U(X/F,T).
  \end{aligned}
  \end{equation*}
Thus the (Zariski) locally-trivial affine bundle map
$$U(X,F) \longrightarrow \Gr(F) \shorttimes \Gr(X/F)$$
restricted to the (Zariski) locally-trivial affine bundle map

\vspace{\abovedisplayskip}
\hfill $\displaystyle U(X,X_S) \longrightarrow \Gr(F) \times U(X/F,T).$ \qedhere
\end{proof}

We conclude Theorem \ref{thm:main2}, Lemma \ref{lemma:second_description_XS}, \ref{lem:value}, \ref{lem:descriptionofX_S}, \ref{lem:worst_case} in the following proposition.

\begin{proposition}\label{prop:conclusion_for_combinatorics}
Let $f:X \hookrightarrow Y$ be a minimal sectional mono and $S:=Y/X$ be the quotient. One gets (Zariski) locally-trivial affine maps
\begin{equation*}
\begin{aligned}
\Gr(Y) &\longrightarrow \Gr(X) \shorttimes \Gr(S) \oder U(X,X_S), \\
U(Y,X) &\longrightarrow \Gr(X) \shorttimes \Gr(S) \oder U(X,X_S), \\
\end{aligned}
\end{equation*}
where
\begin{equation*}
\begin{aligned}
  X_S:=\;& \max \left\{ M \subseteq X \,\middle|\; [S,X/M ]^1=1 \right\} \\ 
  =\;& \ker (X \longrightarrow \tau S) \\ 
  =\;& \begin{cases}
  E' \oplus \ker (\tau X_1 \longrightarrow \tau Y) & Y \text{ not projective, }\\
  E & Y \text{ projective. }
  \end{cases} \\ 
\end{aligned}
\end{equation*}
Moreover, when $X_S=F \oplus T$ with $F$ and $T$ nonzero indecomposable, $F\hookrightarrow X$ irreducible and $T \hookrightarrow X/F$ sectional mono, one gets a (Zariski) locally-trivial affine map
$$U(X,X_S)\longrightarrow \Gr(F) \shorttimes U(X/F,T) \oder U(F,F_{X/F}).$$
\end{proposition}

With Proposition \ref{prop:conclusion_for_combinatorics} established, we prove Theorem \ref{thm:bigorder} case by case, reducing it to a purely combinatorial problem. For each indecomposable representation $Y$ with $\ord(Y) > 2$, one can trace a path in Figure \ref{fig:minisecmono} ending at $Y$ that corresponds to the chosen minimal sectional monomorphism $\iota: X \longrightarrow Y$. If there are two paths terminating at $Y$, we select the shorter path when it is a sectional mono; otherwise, the longer path is chosen as the sectional mono.

\begin{proposition}\label{prop:irrcase}
Let $\iota: X \hookrightarrow Y$ be an irreducible monomorphism in Figure \ref{fig:minisecmono}. Then $\Gr(Y)$ has an affine paving.
\end{proposition}

\begin{proof}
Let $S=Y/X$. By applying Theorem \ref{thm:main2} to the short exact sequence 
$$0\longrightarrow X \longrightarrow Y \longrightarrow S \longrightarrow 0,$$
we get a (Zariski) locally-trivial affine map
$$\Gr(Y) \longrightarrow \Gr(X) \shorttimes \Gr(S) \oder U(X,X_S).$$ 
By observation of  Figure \ref{fig:minisecmono}, $\orde(S)=\orde(Y)-\orde(X)$ is smaller or equal to $2$, so by Lemma \ref{lem:smallvecdim} $\Gr(S)$ has the affine paving property. Let $Y':=X$, $X':=X_S$, $S':=Y'/X'$, we apply Theorem \ref{thm:main2} to the short exact sequence 
$$0\longrightarrow X' \longrightarrow Y' \longrightarrow S' \longrightarrow 0$$
and get (Zariski) locally-trivial affine maps
\begin{equation*}
\begin{aligned}
\Gr(X) &\longrightarrow \Gr(X') \shorttimes \Gr(S') \oder U(X',X'_{S'}) \\
U(X,X_S)&\longrightarrow \Gr(X') \shorttimes \Gr(S') \oder U(X',X'_{S'}).
\end{aligned}
\end{equation*}
Luckily $\orde(S')$ is still smaller or equal to $2$. We can continue this process until the order of representations is small enough.
\end{proof}

\begin{remark}
	We can not copy everything in \cite[Lemma 56]{irelli2019cell}, sometimes it would happen that $X_S=F \oplus T$ with $F$ and $T$ indecomposable, $F \hookrightarrow X$ is irreducible but $T \longrightarrow X/F$ is not a sectional mono. Even when $X_S$ is indecomposable, the map $X_S \longrightarrow X$ can be not a sectional mono.
	
	For example, take the quiver of type $E_7$: 
	\[
	\begin{tikzcd}[row sep=3mm, column sep=5mm]
	                &                 &                 & \bullet \arrow[d] &                 &                 \\
	\bullet \arrow[r] & \bullet \arrow[r] & \bullet \arrow[r] & \bullet           & \bullet \arrow[l] & \bullet \arrow[l]
	\end{tikzcd}
	\]
	 take $Y=\representation{111111}{0}$, $X=\representation{111110}{0}$, then $X_S=\representation{111100}{0}$, the map $X_S \longrightarrow X$ is not a sectional mono.
	 
\end{remark}

Proposition \ref{prop:irrcase} solved all the cases for type $E_6$, $E_7$, and the rests are exception cases in type $E_8$, as shown in Figure \ref{fig:minisecmono} \subref{subfig:exception}). These exceptional cases are addressed in a similar manner, although the argument becomes more intricate.

\begin{center}
\begin{figure}[ht]
\centering
\vspace{0mm}
\includegraphics[scale=0.8]{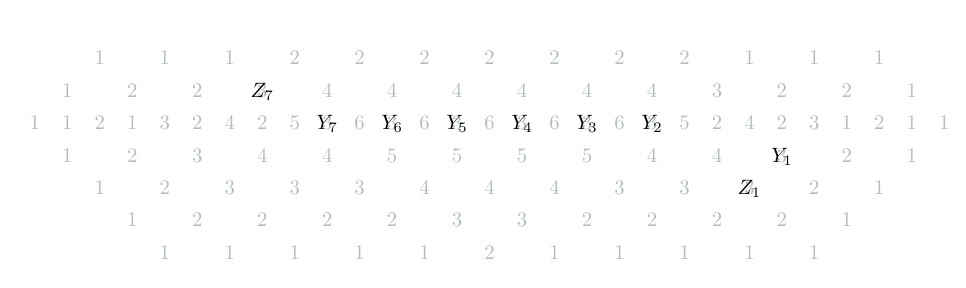}  
			\caption{Labeling for $E_8$ exception cases.}
			\label{fig:hZi}
\end{figure}
\end{center}

As illustrated in Figure \ref{fig:hZi}, the seven ending vertices $[Y]$ are labeled as $Y_1, \ldots, Y_7$, arranged from right to left. In the following examples, we demonstrate that each $\Gr(Y_i)$ admits an affine paving. Together with Proposition \ref{prop:irrcase}, this establishes Theorem \ref{thm:bigorder}.
\begin{center}
\begin{figure}[ht]
\centering
\vspace{0mm}
\includegraphics[scale=0.8]{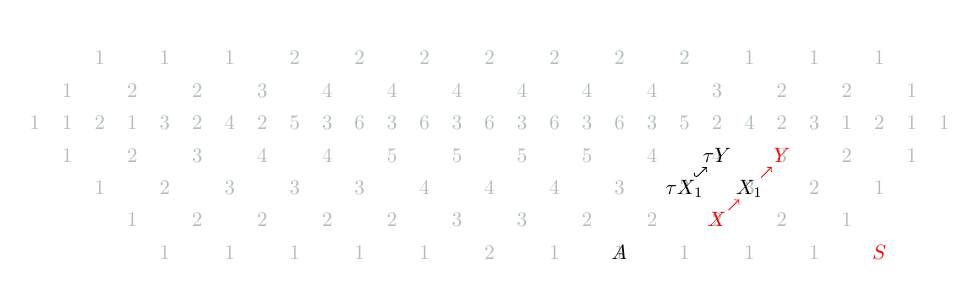}  
			\caption{Labeling in the case $Y=Y_1$.}
			\label{fig:h1}
\end{figure}
\end{center}
\begin{eg}
We demonstrate that $\Gr(Y_1)$ admits an affine paving. When $Z_1 \longrightarrow Y_1$ is injective, we know from Proposition \ref{prop:irrcase} that $\Gr(Y_1)$ has an affine paving. When $Z_1 \longrightarrow Y_1$ is not injective, we are in the situation of Figure \ref{fig:h1}, where $Y=Y_1$ and $A=X_S$. By applying Theorem \ref{thm:main2} to the short exact sequences
\begin{equation*}
\begin{aligned}
&0\longrightarrow X \longrightarrow Y \longrightarrow Y/X \longrightarrow 0\\
&0\longrightarrow A \longrightarrow X \longrightarrow X/A \longrightarrow 0\\
\end{aligned}
\end{equation*}
we obtain (Zariski) locally-trivial affine maps 
\begin{equation*}
\begin{aligned}
\Gr(Y) &\longrightarrow \Gr(X) \shorttimes \Gr(X/Y)\hspace{3mm} &&\oder U(X,A)\\
U(X,A) &\longrightarrow \Gr(A) \shorttimes \Gr(X/A) &&\oder U(A,-).\footnotemark\\
\end{aligned}
\end{equation*}
\footnotetext{$\Gr(A)$ is empty or a singleton, so is $U(A,-)$, no matter what representation is in the dash mark.}
These maps give the variety $\Gr(Y)$ an affine paving from bottom to top.
\end{eg}

In the examples that follow, the chosen short exact sequences are omitted for clarity. As a reminder to the reader, for a short exact sequence 
$$0\longrightarrow M \longrightarrow N \longrightarrow N/M \longrightarrow 0$$
where $\iota: M \hookrightarrow N$ is a minimal sectional mono, Theorem \ref{thm:main2} implies the existence of (Zariski) locally-trivial affine maps 
\begin{equation*}
\begin{aligned}
\Gr(N) &\longrightarrow \Gr(M) \shorttimes \Gr(N/M)\hspace{3mm} &&\oder U(M,M_{N/M})\\
U(N,M) &\longrightarrow \Gr(M) \shorttimes \Gr(N/M) &&\oder U(M,M_{N/M})\\
U(N,M \oplus T) &\longrightarrow \Gr(M) \shorttimes U(N/M,T) &&\oder U(M,M_{N/M}).\\
\end{aligned}
\end{equation*}

\begin{center}
\begin{figure}[ht]
\centering
\vspace{0mm}
\includegraphics[scale=0.8]{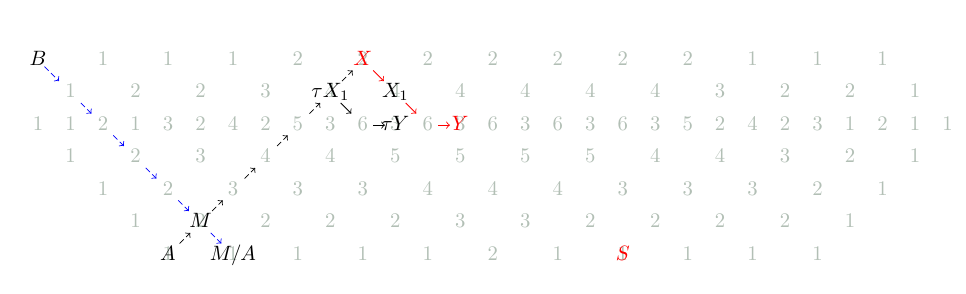}  
			\caption{Labeling in the case $Y=Y_5$.}
			\label{fig:h2}
\end{figure}
\end{center}
\begin{eg}
We demonstrate that $\Gr(Y_5)$ admits an affine paving; the same arguments extend to $\Gr(Y_i)$ for $i=2,3,4$. Notations from Figure \ref{fig:h2} are used, where $Y=Y_5$ and $A=X_S$. We have
$$\Gr(Y) \longrightarrow \Gr(X) \shorttimes \Gr(Y/X) \oder U(X,A).$$
When the map $M \longrightarrow X$ is not monomorphism, we get
$$U(X,A) \longrightarrow \Gr(A) \shorttimes \Gr(X/A) \oder U(A,-);$$
when the map $M \longrightarrow X$ is monomorphism, we get
%tex.stackexchange.com/questions/453057/alignment-in-cases-with-text-outside-the-cases
%https://tex.stackexchange.com/questions/483234/align-text-in-a-cases-environment-with-text-outside
\begin{equation*}
\begin{aligned}
&U(X,A)=U(X,M) \bigsqcup U(M,A)\hspace{-30mm}&&\\
U(X,M) &\longrightarrow \Gr(M) \shorttimes \Gr(X/M) && \oder U(M,A \oplus B)\\
U(M,A) &\longrightarrow \Gr(A) \shorttimes \Gr(M/A)&& \oder U(A,-)\\
U(M,A \oplus B)  &\longrightarrow \begin{cases}
\Gr(A) \shorttimes \Gr(M/A)\\
\Gr(A) \shorttimes U(M/A,B)\\
\end{cases} \hspace{-5mm}&& \begin{aligned}
\oder U(A,-), &\qquad B=0,\\
\oder U(A,-), &\qquad B\neq 0.
\end{aligned}\\
\end{aligned}
\end{equation*}

Since the order of $X$, $Y/X$, $A$, $X/A$, $M$, $X/M$, $M/A$ is smaller or equal to $2$ as well as $\ord(A)=\ord(M/A)=1$, the induction process stops, we get an affine paving of $\Gr(Y)$.
\end{eg}

\begin{center}
\begin{figure}[ht]
\centering
\vspace{0mm}
\includegraphics[scale=0.8]{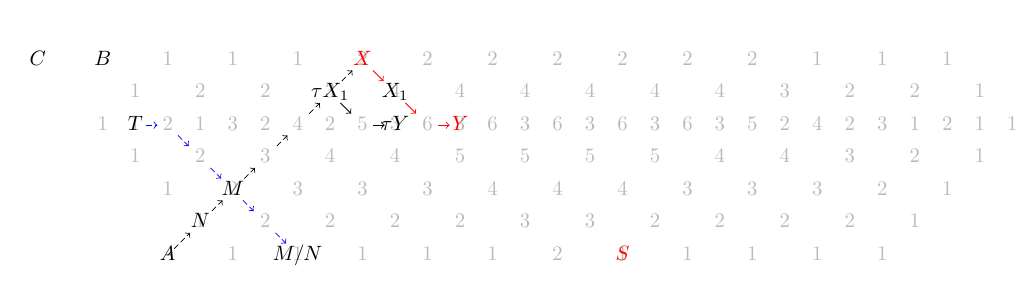}  
			\caption{Labeling in the case $Y=Y_6$.}
			\label{fig:h3}
\end{figure}
\end{center}

\begin{eg}
We verify that $\Gr(Y_6)$ is equipped with an affine paving. Referring to Figure \ref{fig:h3}, where $Y=Y_6$ and $A=X_S$, we obtain the map 
$$\Gr(Y) \longrightarrow \Gr(X) \shorttimes \Gr(Y/X) \oder U(X,A).$$
If $A=0$, we're done; if not, we decompose $A \longrightarrow Y$ as compositions of minimal sectional monos:\\
Case 1: $M\longrightarrow X$ is not injective, then
 \begin{equation*}
 \begin{aligned}
 &U(X,A)=U(X,N) \bigsqcup U(N,A)\\
 U(X,N) &\longrightarrow \Gr(N) \shorttimes \Gr(X/N) \oder U(N,-)\\
 \end{aligned}
 \end{equation*}
Case 2: $M\longrightarrow X$ is injective, then
  \begin{equation*}
  \begin{aligned}  
  U(X,A)&=U(X,M) \bigsqcup U(M,N) \bigsqcup U(N,A)\hspace{-20mm}&&\\
  U(X,M) &\longrightarrow \Gr(M) \shorttimes \Gr(X/M) &&\hspace{-3mm} \oder U(M,N \oplus T)\\
  U(M,N) &\longrightarrow \Gr(N) \shorttimes \Gr(M/N)&&\hspace{-3mm} \oder U(N,-).\\
  \end{aligned}
  \end{equation*}
  Since $M \longrightarrow X$ is injective, we get $B=0$, thus $C=0$ also, and then the sectional map $T \longrightarrow M/N$ is injective. We get 
  $$  U(M,N \oplus T)  \longrightarrow 
    \Gr(N) \shorttimes U(M/N,T)
    \oder U(N,-).$$
  Since $\Gr(X)$, $\Gr(Y/X)$, $\Gr(N)$, $\ldots$ have the property of affine paving, we conclude that $\Gr(Y)$ also has the property of affine paving.
\end{eg}

\begin{center}
\begin{figure}[ht]
\centering
\vspace{0mm}
\includegraphics[scale=0.8]{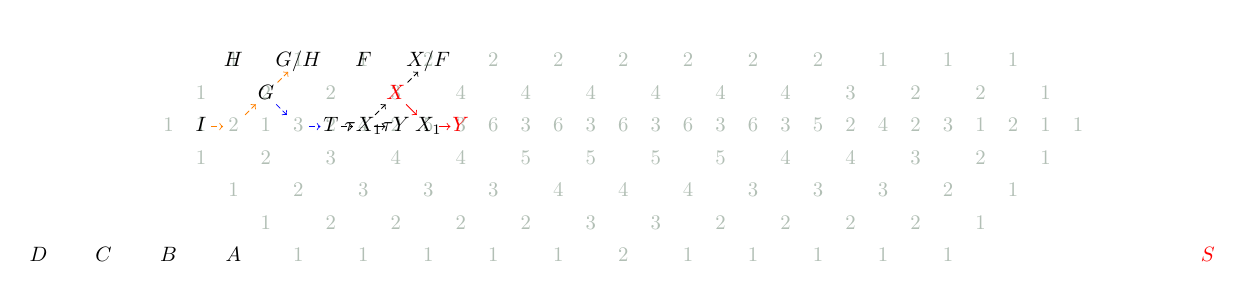}  
			\caption{Labeling in the case $Y=Y_7$.}
			\label{fig:h4}
\end{figure}
\end{center}

\begin{eg}
Finally we begin to tackle the most difficult case(Figure \ref{fig:h4}), where we show that $\Gr(Y_7)$ has an affine paving. 

When $Z_7 \longrightarrow Y_7$ is not injective, the map $F \longrightarrow Y_7$ is the minimal sectional mono, using the short exact sequences
$$0\longrightarrow F \longrightarrow Y \longrightarrow Y/F \longrightarrow 0,$$ 
 we get
$$\Gr(Y) \longrightarrow \Gr(F) \shorttimes \Gr(Y/F) \oder U(F,-),\hspace{8mm}$$
and then we get the result.

When $Z_7 \longrightarrow Y_7$ is injective (i.e., $A=0$), we compute that $X_S = F \oplus T$, yielding the map
$$\Gr(Y) \longrightarrow \Gr(X) \shorttimes \Gr(Y/X) \oder U(X,F \oplus T).$$
Notice that

$$A=0 \hspace{4mm}\Longrightarrow \begin{cases}
\hspace{4mm}B=0  \hspace{4mm}\Longleftrightarrow\hspace{4mm} T  \longrightarrow X/F \text{ is injective}\\
\hspace{4mm}C=0  \hspace{4mm}\Longleftrightarrow\hspace{4mm} G  \longrightarrow T \text{ is injective}\\
\hspace{4mm}D=0  \hspace{4mm}\Longleftrightarrow \hspace{4mm}I  \longrightarrow G/H \text{ is injective}\\
\end{cases}$$
We get 
\begin{equation*}
\begin{aligned}
U(X,F \oplus T) &\longrightarrow \Gr(F) \shorttimes U(X/F,T) &&\oder U(F,-)\\
U(X/F,T) &\longrightarrow \Gr(T) \shorttimes \Gr(X/X_S) &&\oder U(T,G)\\
U(T,G) &\longrightarrow \Gr(G) \shorttimes \Gr(T/G) &&\oder U(G,H \oplus I)\\
U(G,H \oplus I) &\longrightarrow \Gr(H) \shorttimes U(G/H,I) &&\oder U(H,-). \\
\end{aligned}
\end{equation*}
We conclude that $\Gr(Y)$ has the affine paving property.\qedhere
\end{eg}

%%%%%%%%%%%%%%%%%%%%%%%%%%%%%%%%%%%%%%%%%%%%%%%%%%%%%%%%%%%%%%%%%%%%%%%%%%%%%%%%%%%%%%%%%%%%%%%
\section{Application: Affine Case}\label{sec:affine}
This section tries to explain the difficulty of the Conjecture \ref{conj:affinecase}.

\begin{conj}\label{conj:affinecase}
For any affine quiver $Q$ and any indecomposable representation $M \in \rep(Q)$, the (strict) partial flag variety $\Flag{}(M)\cong\Grr(\Phi(M))$ has an affine paving.
\end{conj}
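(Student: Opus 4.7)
The plan is to leverage the reductions already proved in the paper (parts (2) and (3) of the main theorem in the introduction) and then to tackle the remaining genuinely new case. Part (2) already handles all indecomposables over quivers of type $\tilde{A}$ and $\tilde{D}$, so those types pose no further difficulty for the conjecture. Part (3) reduces the problem for type $\tilde{E}$ to showing that $\Flag{}(N)$ admits an affine paving whenever $N$ is a regular quasi-simple representation, i.e., a module at the mouth of some tube in the Auslander--Reiten quiver. Therefore my first step would be to restate Conjecture \ref{conj:affinecase} as this equivalent (narrower) statement about regular quasi-simples in type $\tilde{E}$.

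Next I would split the remaining problem along the tube classification. In each type $\tilde{E}_n$ there are finitely many exceptional tubes (of ranks $(3,3,2)$, $(4,3,2)$, $(6,3,2)$ for $n=6,7,8$) and a $\mathbb{P}^1$-family of homogeneous (rank $1$) tubes. For the exceptional tubes there are only finitely many quasi-simples $M_1,\dots,M_r$ per tube, each an explicit small indecomposable, so I would attempt a finite case-by-case analysis: writing $\Phi(M_k)$ down as an $R$-module, stratifying $\Grr(\Phi(M_k))$ by dimension vector, and then applying Theorems \ref{thm:main1} and \ref{thm:main2} repeatedly to compatible short exact sequences $0\to X\to M_k\to S\to 0$ chosen so that $X$ and $S$ are preprojective/preinjective (hence already covered) and $[S,X]^1\le 1$. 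This is in the same spirit as the Dynkin $E$ appendix computations referenced in the paper. For the homogeneous tubes the quasi-simples form a $\mathbb{P}^1$-family, all with the same dimension vector $\delta$ (the null root), so I would try to show that the flag varieties $\Flag{}(M_\lambda)$ vary in a flat family whose fibres are pairwise isomorphic (e.g.\ via a group action or an equivariance under the automorphism group of the tube), reducing to a single representative module.

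The main obstacle is precisely the regular quasi-simple case in type $\tilde{E}$, and especially the homogeneous tubes. Unlike the Dynkin setting, where every AR sequence $0\to\tau M\to E\to M\to 0$ makes progress because $\tau M$ is strictly smaller in a well-founded order, a quasi-simple $M$ in a homogeneous tube satisfies $\tau M\cong M$, so the AR sequence is a self-extension with $[M,M]^1=1$; there is no obvious induction parameter. Moreover neither a projective cover nor an injective envelope of $M$ decomposes into already-handled pieces via a single short exact sequence with $[S,X]^1\le 1$, so a direct application of the main Theorems in Section \ref{sec:mainthm} is blocked. Overcoming this will likely require either identifying a non-trivial filtration of $M$ by preprojective/preinjective subfactors whose successive quotients satisfy the hypotheses of Theorems \ref{thm:main1}--\ref{thm:main2}, or a genuine strengthening of Theorem \ref{thm:main2} allowing $[S,X]^1>1$ combined with a careful analysis of the non-split stratum; this is why the statement is left as a conjecture.
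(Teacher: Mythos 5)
The statement you are addressing is stated in the paper as Conjecture \ref{conj:affinecase}, and the paper itself does not prove it: what it proves is exactly the reduction you describe in your first paragraph (Theorem \ref{thm:affinecase} together with Lemma \ref{lem:affineADcase}), namely that all indecomposables in types $\tilde{A}$ and $\tilde{D}$ are handled, and that type $\tilde{E}$ would follow once every regular quasi-simple representation has an affine paving. So the first half of your proposal only restates results the paper already establishes, and everything hinges on your second step, which is where the genuine gap lies.

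For the exceptional tubes you propose a finite case-by-case check using Theorems \ref{thm:main1} and \ref{thm:main2} applied to sequences $0\to X\to M_k\to S\to 0$ with $X,S$ preprojective/preinjective and $[S,X]^1\leqslant 1$. This runs into precisely the obstruction the paper records at the end of Section \ref{sec:affine}: for a regular quasi-simple $Y$ of type $\tilde{E}$ there may be \emph{no} short exact sequence with $[S,X]^1\leqslant 1$ at all (if $X$ is preprojective and $S$ preinjective, then $[S,X]^1=[X,\tau S]$ is typically larger than $1$), and the quasi-simples of $\tilde{E}_7$, $\tilde{E}_8$ are not of small order, so Lemma \ref{lem:smallvecdim} is unavailable as a base case. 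Your homogeneous-tube idea (a family argument identifying the fibres over the $\mathbb{P}^1$ of rank-one tubes) would at best reduce to a single representative $M_\lambda$; it produces no affine paving of that representative, which is the actual problem, and even the claimed isomorphism of the fibres is asserted rather than argued. You acknowledge the obstruction yourself and conclude that this is why the statement is a conjecture --- which is accurate, but it means your text is a plan that stops exactly where the paper stops, not a proof of the statement. Closing the gap would require either a strengthening of Theorem \ref{thm:main2} to the case $[S,X]^1>1$ with control of the non-split strata, or a transport of the argument for quasi-simple regular modules in \cite{irelli2019cell} (their Proposition 48) from quiver Grassmannians to the extended-quiver setting; neither is carried out here.
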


Actually, if readers follow the proof in \cite[Section 6]{irelli2019cell}, and change everything from $\Gr(-)$ to $\Grr(\Phi(-))$, then there is no difference except the Proposition 48, in which the authors proved the affine paving properties of quasi-simple regular representations. So we reduced the question to the case of quasi-simple regular representation. Combined with Lemma \ref{lem:affineADcase}, we've proved the affine paving properties for $\tilde{A},\tilde{D}$ cases.

\begin{lemma}\label{lem:affineADcase}
	Assume that $Q$ is a quiver of type $\tilde{A}$ or $\tilde{D}$, $M \in \rep(Q)$ is the \textbf{regular quasi-simple} representation, then the Grassmannian $\Grr(\Phi(M))$ has an affine paving.
\end{lemma}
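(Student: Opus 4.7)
The plan is to extend the proof of \cite[Proposition 48]{irelli2019cell}, which establishes affine pavings for ordinary Grassmannians of regular quasi-simple representations in types $\tilde{A}$ and $\tilde{D}$, to the extended-quiver setting. The key observation is that the canonical functor $\Phi$ is exact (Lemma \ref{lm:Extvan}(\ref{lm:functorisexact})) and preserves extension groups (Lemma \ref{lm:Extvan}(\ref{lm:isoofext})), so any short exact sequence $0 \to X \to M \to S \to 0$ in $\rep(Q)$ used in the original argument lifts to a short exact sequence $0 \to \Phi(X) \to \Phi(M) \to \Phi(S) \to 0$ in $\Mod(R)$, on which Theorems \ref{thm:main1} and \ref{thm:main2} can be invoked in place of their Grassmannian counterparts.

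For type $\tilde{A}_n$, I would first observe that every regular quasi-simple representation $M$ is \emph{thin}, meaning $\dim_K M_i \leq 1$ for every vertex $i$. The extended module $\Phi(M)$ therefore also has thin dimension vector. In this case $\Grr_{\dimvec{f}}(\Phi(M))$ is either empty or admits a direct Schubert-cell-type decomposition: at each vertex of $v(Q_d)$ (or $v(Q_{d,\operatorname{str}})$) the subspace is either $0$ or the whole $1$-dimensional space, and the constraints imposed by the arrows are linear in these choices. This yields the affine paving directly, bypassing the main theorems of Section \ref{sec:mainthm}, and in the cyclic case recovers the results of \cite{sauter2015cell} and \cite{eberhardt2022motivic}.

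For type $\tilde{D}_n$, the regular quasi-simples split across three non-homogeneous tubes of ranks $2$, $2$, $n-2$, plus a $\mathbb{P}^1$-family of homogeneous tubes, with $\ord(M) \leq 2$ throughout. I would induct on the regular length within each tube, using short exact sequences between indecomposables in the same tube (essentially the Auslander--Reiten sequences or their variants) and invoking Theorem \ref{thm:main1} when the sequence splits after applying $\Phi$, or Theorem \ref{thm:main2} otherwise. The base case of the induction uses representations whose support is contained in a proper subquiver of $\tilde{D}_n$; such a subquiver is of finite Dynkin type, so the affine paving for its partial flag varieties is already covered by Theorem \ref{thm:Dynkincase}.

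The main obstacle I anticipate is the $\tilde{D}_n$ step: verifying the hypothesis $[S,X]^1 = 1$ of Theorem \ref{thm:main2} and identifying the modules $X_S$ and $S^X$ for each chosen short exact sequence requires AR-theoretic computations inside the tubes of $\tilde{D}_n$, of essentially the same flavor as the eight critical cases handled for Dynkin $E$-type quivers in Section \ref{sec:Dynkin} and Appendix \ref{appendix:proofcomplement}. Since the tubes of $\tilde{D}_n$ have bounded rank and quasi-simples have order at most $2$, these checks should reduce to a finite number of configurations essentially independent of $n$, so the argument is expected to terminate in a tractable case analysis.
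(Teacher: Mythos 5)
Your proposal is significantly heavier than the paper's proof, and the $\tilde{D}$ branch contains a conceptual confusion that would prevent it from working as stated.

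The paper's argument is a one-liner: since $M$ is regular quasi-simple, its dimension vector is bounded above by the minimal imaginary root $\delta$ (by \cite[Section 9, Lemma 3]{crawley1992lectures}), so $\max_{i\in v(Q)}\dim_K M_i\leqslant 2$ for $\tilde D$ and $\leqslant 1$ for $\tilde A$; then for $\tilde D$ (which is a tree) one invokes Lemma \ref{lem:smallvecdim} directly, and for $\tilde A$ the thin case is trivial (each stratum $\Grr_{\dimvec f}(\Phi(M))$ is empty or a point). Your $\tilde A$ observation is essentially this, so that branch is fine even if you build a slightly more elaborate cell decomposition than necessary.

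Your $\tilde D$ branch, however, does not match the statement you are proving. You propose an ``induction on regular length within each tube'' using short exact sequences between indecomposables of the tube and Theorems \ref{thm:main1}/\ref{thm:main2}. But a regular \emph{quasi-simple} has quasi-length $1$: it sits at the mouth of its tube and has no proper nonzero regular subobjects in that tube, so there is no shorter regular object to induct from, and the AR sequence ending at a quasi-simple has a middle term outside the tube (involving a preprojective). You appear to have quietly replaced ``regular quasi-simple'' by ``regular indecomposable''; the induction you sketch is what \cite[Section 6]{irelli2019cell} does \emph{after} the quasi-simple case is known, which is exactly what this lemma is supposed to supply. A smaller point: since $\Phi$ is exact and fully faithful, a sequence in $\rep(Q)$ splits if and only if its image under $\Phi$ does, so ``splits after applying $\Phi$'' is not an independent alternative. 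The missing ingredient in your proof is precisely the dimension-vector bound by $\delta$, which kills the $\tilde D$ case in one stroke via Lemma \ref{lem:smallvecdim} and makes the tube analysis unnecessary.
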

\begin{proof}
The concept ``quasi-simple'' is defined in \cite[Definition 15]{irelli2019cell}; the concepts ``preprojective'',``preinjective'' and ``regular'' are defined in \cite[2.1.1]{irelli2019cell}. It's shown in \cite[Section 9, Lemma 3]{crawley1992lectures} that the regular quasi-simple representation $M$ have dimension vector smaller or equal to the minimal positive imaginary root, thus $\orde(M) \leqslant 2$ for the quiver of type $\tilde{D}$ and $\orde(M) \leqslant 1$ for the quiver of type $\tilde{A}$.
\end{proof}
%We use rep 2 Cor 18.8.

\begin{theorem}\label{thm:affinecase}\
\begin{enumerate}[(1)]
\item Assume that $Q$ is a quiver of type $\tilde{A}$ or $\tilde{D}$, then for any indecomposable representation $M$, the Grassmannian $\Gr(\Phi(M))$ has an affine paving;
\item Assume that $Q$ is an affine quiver of type $\tilde{E}$, and $\Gr(\Phi(N))$ has an affine paving for any regular quasi-simple representation $N \in \rep(Q)$. The Grassmannian $\Gr(\Phi(M))$ then has an affine paving for any indecomposable representation $M$.
\end{enumerate}
\end{theorem}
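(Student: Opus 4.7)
The plan is to transfer the argument of \cite[Section 6]{irelli2019cell} almost verbatim, replacing every occurrence of $\Gr(-)$ by $\Grr(\Phi(-))$. By Lemma \ref{lm:Extvan}(\ref{lm:isoofext}), the functor $\Phi$ is fully faithful and preserves $\Ext^i$, so every Auslander--Reiten sequence and every numerical $\Ext^1$-calculation in $\rep(Q)$ carries over to $\Mod(R)$ unchanged. Theorem \ref{thm:main1} and Theorem \ref{thm:main2} play the roles of \cite[Theorem 24]{irelli2019cell} and \cite[Theorem 32]{irelli2019cell}, and Remark \ref{rem:topo} supplies the stratification argument that turns an affine paving of each piece $\Grr(\Phi(Y))_{\dimvec{f},\dimvec{g}}$ into one of $\Grr_{\dimvec{h}}(\Phi(Y))$.

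First I would invoke the standard trichotomy for affine type: an indecomposable $M \in \rep(Q)$ is preprojective, preinjective, or regular. In the preprojective (resp.\ preinjective) case I induct along the AR-quiver via the almost split sequence $0 \to \tau M \to E \to M \to 0$ (resp.\ its dual), for which $[M,\tau M]^1 = 1$, and I apply Theorem \ref{thm:main2}. In the regular case, $M$ sits in a tube and admits a filtration $0 = M_0 \subset M_1 \subset \cdots \subset M_{\ell} = M$ whose successive quotients are regular quasi-simples in the same tube. Between consecutive terms one has $[S,X]^1 \leqslant 1$, so iterating Theorem \ref{thm:main1} and Theorem \ref{thm:main2} reduces the problem to constructing an affine paving of $\Grr(\Phi(N))$ for each regular quasi-simple $N$ in the tube.

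This reduction is exactly the one in \cite[Section 6]{irelli2019cell}; the only new input needed is the analogue of their Proposition 48, which treats regular quasi-simple representations. For types $\tilde{A}$ and $\tilde{D}$, I invoke Lemma \ref{lem:affineADcase}: every regular quasi-simple $N$ satisfies $\ord(N) \leqslant 2$, so Lemma \ref{lem:smallvecdim} (applicable since the underlying graph is a tree) directly produces an affine paving of $\Grr_{\dimvec{f}}(\Phi(N))$. This settles (1). For type $\tilde{E}$, the regular quasi-simple case is precisely the hypothesis in the theorem, so the same reduction yields (2).

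The main obstacle, and the reason for the case split, is the regular quasi-simple case in type $\tilde{E}$: the quasi-simples in a tube can have unboundedly large dimension vectors as the tube parameter varies along a continuous family, so neither the small-order reduction of Lemma \ref{lem:smallvecdim} nor the finite case analysis carried out for Dynkin $E$ in Section \ref{sec:Dynkin} is available, which is why (2) is stated conditionally.
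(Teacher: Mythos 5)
Your high-level strategy matches the paper's own proof exactly: transfer \cite[Section 6]{irelli2019cell} through the exact fully faithful functor $\Phi$ (so that Theorems \ref{thm:main1} and \ref{thm:main2} replace their Theorems 24 and 32), reduce to the regular quasi-simple case, and then settle types $\tilde A$ and $\tilde D$ by combining Lemma \ref{lem:affineADcase} with Lemma \ref{lem:smallvecdim}, while type $\tilde E$ is left conditional. Two of the details you supply in support of this outline are, however, wrong.

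First, the almost split sequence $0 \to \tau M \to E \to M \to 0$ is used incorrectly. Applying Theorem \ref{thm:main2} to this sequence (with $X=\tau M$, $Y=E$, $S=M$) produces an affine paving of $\Grr(\Phi(E))$, the \emph{middle} term, and in doing so \emph{presupposes} that $\Grr(\Phi(M))$ is already paved; it does not produce one for $\Grr(\Phi(M))$. The same issue arises if you dualize to $0 \to M \to E' \to \tau^{-1}M \to 0$. The preprojective and preinjective inductions in \cite{irelli2019cell} instead realize $M$ as the \emph{middle} term of a suitable short exact sequence $0 \to X \to M \to S \to 0$, parallel to the sectional-monomorphism device used for Dynkin quivers in Section \ref{sec:Dynkin} and Appendix \ref{appendix:proofcomplement} of this paper.

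Second, your explanation of why type $\tilde E$ is singled out is mistaken: the regular quasi-simple representations do \emph{not} have unboundedly large dimension vectors---by \cite[Section 9, Lemma 3]{crawley1992lectures}, every regular quasi-simple has dimension vector bounded above by the minimal positive imaginary root $\delta$, and the tubes of period $>1$ are finite in number. The actual obstruction, recorded in the paper right after the statement of Conjecture \ref{conj:affinecase}, is that $\orde(\delta) \geqslant 3$ in type $\tilde E$, so Lemma \ref{lem:smallvecdim} is unavailable, and that some regular quasi-simples $Y$ admit no short exact sequence $0 \to X \to Y \to S \to 0$ with $[S,X]^1 \leqslant 1$, so neither Theorem \ref{thm:main1} nor Theorem \ref{thm:main2} can be applied at the base of the induction.
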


For a regular quasi-simple representation $Y$ of type $\tilde{E}$, it's possible that there's no short exact sequence
$$\eta:0\longrightarrow X \longrightarrow Y \longrightarrow S \longrightarrow 0$$
such that $[S,X]^1 \leqslant 1$. Then we can no longer use Theorem \ref{thm:main1} or \ref{thm:main2}. Hence, the new methods are needed for this case.

%https://tex.stackexchange.com/questions/233481/subcaption-defaulting-to-capital-letter-labels

\bibliographystyle{plain}
\bibliography{affinepaving}

\end{document}